\DeclareMathOperator{\pos}{pos}
\DeclareMathOperator{\Pol}{Pol}
\newcommand{\R}{\mathbb{R}}
\newcommand{\N}{\mathbb{N}}
\newcommand{\Z}{\mathbb{Z}}
\newcommand{\La}{\mathbb{L}}
\newcommand{\eps}{\varepsilon}
\DeclareMathOperator{\conv}{conv}
\DeclareMathOperator{\su}{sup}
\DeclareMathOperator{\indre}{int}
\DeclareMathOperator{\tr}{Tr}
\newcommand{\Ha}{\mathcal{H}}
\newcommand{\II}{\textit{II}}
\DeclareMathOperator{\lin}{lin}
\DeclareMathOperator{\aff}{aff}
\newcommand*\InsertTheoremBreak{
\begingroup 
\setlength\itemsep{0pt}\setlength\parsep{0pt}
\item[\vbox{\null}]
\endgroup
}
\begin{document}

\title{On multigrid convergence of local algorithms for intrinsic volumes}

\author{Anne Marie Svane}

\institute{Anne Marie Svane \at Department of Mathematics, Aarhus University, 8000 Aarhus C, Denmark \\ \email{amsvane@imf.au.dk}} 
\date{}
\maketitle

\begin{abstract}
Local digital algorithms based on $n\times \dots \times n$ configuration counts are commonly used within science for estimating intrinsic volumes from binary images. This paper investigates multigrid convergence of such algorithms. It is shown that local algorithms for intrinsic volumes other than volume are not multigrid convergent on the class of convex polytopes. In fact, counter examples are plenty. On the other hand, for convex particles in 2D with a lower bound on the interior angles, a multigrid convergent local algorithm for the Euler characteristic is constructed. Also on the class of $r$-regular sets, counter examples to multigrid convergence are constructed for the surface area and the integrated mean curvature. 

\keywords{Image analysis \and Local algorithm \and Multigrid convergence \and Intrinsic volumes \and Binary morphology}
\end{abstract} 

\section{Introduction and main results}\label{intro}
The purpose of this paper is to assess a certain class of algorithms that are widely used for analysing digital output data from e.g.\ microscopes and scanners. These algorithms yield a fast way of estimating the so-called intrinsic volumes $V_q$, $q=0,\dots ,d$, of a given object. The intrinsic volumes include many of the quantities, scientists are most frequently interested in, see e.g.\ \cite{mecke2}, such as the volume $V_d$, the surface area $2V_{d-1}$, the integrated mean curvature $2\pi(d-1)^{-1}V_{d-2}$, and the Euler characteristic $V_0$. 

The algorithms considered rely only on what the image looks like locally, thus we refer to them as local algorithms. The use of local algorithms goes back to \cite{freeman}, see also \cite{digital,lindblad} for an overview of the algorithms suggested in the literature. The popularity of local algorithms is due to the fact that they allow simple linear time implementations \cite{OM}, as opposed to the more complex algorithms of \cite{coeurjolly,sun}. However, as we shall see below, this efficiency is often paid for by a lack of accuracy.

We model a digital image of an object $X\subseteq \R^d$ by a binary image, i.e.\ as the set $X\cap \La$ where $\La$ is some lattice in $\R^d$. In applications, such a binary image is usually obtained from an observed grey-scale image by thresholding. Each point in $\La$ may belong to either $X$ or its complement. 
For every $n\times \dots \times n$ cell in the observation lattice, this yields $2^{n^d}$ possible configurations of foreground and background points. The idea of local algorithms is to estimate $V_q$ as a weighted sum of configuration counts, see Definition \ref{funddef}. 

Local algorithms are suggested many places in the li\-te\-ra\-ture \cite{digital,lindblad,turk,mecke} and various partial definitions are given \cite{kampf2,klette,digital,am2}. In Section \ref{local} we attempt to set up a unified, rigorous definition of local algorithms  and, in particular, to justify the use of local algorithms for the estimation of intrinsic volumes. 

The next question is, when a local algorithm yields a good approximation of $V_q$. A natural criterion for an algorithm is multigrid convergence, i.e.\ that the estimator converges to the true value when the resolution goes to infinity. This is a very strong and in applications often unnatural requirement. Instead, a design based setting is considered where the lattice has been randomly translated before making the observation. The natural, and usually weaker, requirement in this situation is that the estimator should be unbiased, at least asymptotically when the resolution tends to infinity. The various convergence criteria are discussed in Section \ref{convergence} in more detail.

In order for the digital image to contain enough information about $X$ to enable us to estimate $V_q(X)$, some niceness assumptions on the underlying set $X$ are needed. In this paper, we shall investigate which intrinsic volumes $V_q$ allow asymptotically unbiased local estimators when $X$ is assumed to belong to the class of compact convex polytopes with non-empty interior or the class of $r$-regular sets (see Definition~\ref{defrreg}).   

\subsection{Known results}\label{known}
Various results have already been obtained in this direction.
It is well-known, see e.g. \cite{OM}, that there is a local estimator for the volume $V_d$ which is unbiased even in finite resolution given by counting lattice points in $X$ and weighting them by the volume of the unit lattice cell. 

In contrast, J\"{u}rgen Kampf has proved \cite{kampf2} that on the class of finite unions of polytopes, local algorithms for $V_q$ based on $2\times \dots \times 2$ configurations in orthogonal lattices are always asymptotically biased for $0 \leq q  \leq d-2$. In fact, he has shown that the worst case asymptotic bias is always at least $100\%$. 

For $q=d-1$, Ziegel and Kiderlen showed in \cite{johanna} that there exists no asymptotically unbiased local algorithm for the surface area in 3D based on $2\times 2 \times 2$ configurations in an orthogonal lattice, but the asymptotic worst case bias is strictly less than 100\% in this case. 

It has been conjectured in \cite{digital} and \cite{klette} that no local algorithm for estimation of surface area is multigrid convergent in dimension $d=2$ and $d=3$, respectively. This was proved by Tajine and Daurat \cite{tajine} in dimension $d=2$ in the special case of length estimation for straight line segments. In fact, they show that any algorithm will be (asymptotically) biased for almost all slopes of the line segment. In \cite[Theorem~5]{rataj}, Kiderlen and Rataj prove a formula for the asymptotic mean of a surface area estimator, on which a proof in arbitrary dimension $d$ could be based.

On the other hand it is known that with suitable smoothness conditions ($r$-regularity) on the boun\-da\-ry $\partial X$ there exists  a multigrid convergent local algorithm for estimating the Euler characteristic $V_0$ in 2D \cite{pavlidis} and in 3D, see \cite{SLS} combined with \cite{lee}. In fact, this algorithm yields the correct value in sufficiently high finite resolution. The Euler characteristic differs from the other intrinsic volumes in that it is a topological invariant, hence it only requires a topologically correct reconstruction of $X$ to estimate it.
It is still a partially open question whether the remaining $V_q$ can be estimated on the class of $r$-regular sets. However, it is shown in \cite{am2} that there is no asymptotically unbiased estimator for the integrated mean curvature $V_{d-2}$ in dimensions $d>2$ based on $2\times \dots \times 2$ configurations. This suggests that $V_0$ is special.

\subsection{Main results of the paper}\label{main}
We first consider the estimation of $V_q$ on the class $\mathcal{P}^d$ of compact convex polytopes  with non-empty interior. Any $P\in \mathcal{P}^d$ can be written in the form
\begin{equation*}
P=\bigcap_{i=1}^N H^-_{u_i,t_i}
\end{equation*}
where $H^-_{u,t}$ denotes the halfspace $\{x\in \R^d \mid \langle x, u \rangle \leq t\}$ where $\langle \cdot,\cdot \rangle$ denotes the standard Euclidean inner pro\-duct, $u$ belongs to the unit sphere $S^{d-1}$ and $t\in \R$. The parameters $u_i,t_i$ can be used to define a measure $\nu$ on $\mathcal{P}^d$. This is made precise in Section \ref{polspace}. 

When $1\leq q \leq d-1$, we shall prove the following theorem:
\begin{theorem}\label{main1}
For $1\leq q \leq d-1$, any local algorithm for ${V}_q$ in the sense of Definition \ref{funddef} is asymptotically biased (and hence not multigrid convergent) for $\nu $-almost all $P\in \mathcal{P}^d$ if $d-q$ is odd and for a subset of $\mathcal{P}^d$ of positive $\nu$-measure if $d-q$ is even.
\end{theorem}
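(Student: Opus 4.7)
The plan is to analyse the asymptotic mean of a local estimator $\hat V_q^n$ applied to a convex polytope $P$, derive a rigid face decomposition of this mean, and then show that the constraints imposed by asymptotic unbiasedness cannot all be met for $\nu$-most polytopes.

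\textbf{Step 1: asymptotic face decomposition.} For a local algorithm with weights $w$ in the sense of Definition~\ref{funddef}, I would first show that
\begin{equation*}
\lim_{n\to\infty}\mathbb{E}\bigl[\hat V_q^n(P)\bigr]=\sum_{k=0}^{d}\sum_{F\in\mathcal{F}_k(P)}\Ha^k(F)\,\psi_{w,k}(N(F,P)),
\end{equation*}
where $\mathcal{F}_k(P)$ is the set of $k$-faces of $P$, $N(F,P)$ is the normal cone of $P$ at $F$, and $\psi_{w,k}$ is a function on the relevant space of $(d-k)$-dimensional polyhedral cones that depends only on the weights $w$. The derivation proceeds by localising each $n\times\cdots\times n$ configuration near the unique face of $P$ whose normal cone contains the configuration's asymptotic displacement, and by noting that, after rescaling, the local geometry of $P$ near an interior point of $F$ is a product of $F$ with $N(F,P)$.

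\textbf{Step 2: rigidity from $\nu$.} Comparing the asymptotic mean with $V_q(P)=\sum_{F\in\mathcal{F}_q(P)}\Ha^q(F)\gamma(F,P)$, where $\gamma(F,P)$ is the normalised external angle, asymptotic unbiasedness on $P$ is equivalent to the requirements $\psi_{w,k}(N(F,P))=0$ for every face $F$ of dimension $k\neq q$, together with $\psi_{w,q}(N(F,P))=\gamma(F,P)$ for every $q$-face $F$. Smooth variation of the defining data $(u_i,t_i)$ over the support of $\nu$ sweeps the normal cones of faces of $P$ through open subsets of the corresponding cone configuration space. Hence if the identities above hold on a $\nu$-positive subset of $\mathcal{P}^d$, they must hold as identities between the finite-dimensional family $\{\psi_{w,k}:w\}$ and the prescribed target ($0$ or $\gamma$) on open sets of cones, extending by analyticity to full identities on those sets.

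\textbf{Step 3: parity obstruction.} Finally I would exploit the reflection symmetry obtained by swapping foreground and background labels in each configuration: this swap induces the involution $u\mapsto -u$ on the normal cone variables and, combined with the behaviour of the intrinsic volumes under reflection, splits each $\psi_{w,k}$ into pieces of definite parity. When $d-q$ is odd, the parity class in which the external angle function $\gamma$ must be produced is incompatible with the simultaneous vanishing of $\psi_{w,k}$ on cones of all other codimensions, and this incompatibility holds for $\nu$-almost every polytope whose face structure exposes a rich enough family of normal cones. When $d-q$ is even, the parity alone does not force a contradiction on every polytope; instead, one constructs explicit perturbations whose normal cone configurations violate the required identities, yielding failure on a subset of positive $\nu$-measure.

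The technical heart and main obstacle is Step~1: controlling the asymptotic contributions of $n\times\cdots\times n$ configurations near lower-dimensional faces so that the limit factorises as an integral over each face with an integrand depending only on the normal cone, and verifying that the face contributions add without double counting across adjacent faces. Once the face decomposition is in place, Step~2 is essentially real-analytic continuation of the induced functions on cone configuration space, and Step~3 reduces to a careful parity bookkeeping plus construction of explicit counter-examples, which we expect to be straightforward adaptations of the constructions already used in \cite{kampf2,rataj,am2}.
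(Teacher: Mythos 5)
Your Steps 1--2 are, in broad outline, the same route the paper takes (compute the expectation of the estimator on a polytope exactly via hit-or-miss transforms, organise it by faces, compare coefficients with the external-angle representation of $V_q$, and use analyticity in the normal directions), but even there you are postponing the two points that carry real content: (a) for weights $w_l(a)$ that are arbitrary functions of $a$, the limit need not exist and the contributions of faces of dimension $k>q$ diverge like $a^{q-k}$, so one must first show (as in Lemma \ref{estcor} and Corollary \ref{weights}, via the polynomial structure in $t_1,\dots,t_N$ and linear independence of monomials) that on the set where the algorithm is unbiased one may pass to weights homogeneous of degree $q$ and that each graded coefficient converges separately; and (b) the resulting coefficient functions are only \emph{piecewise} analytic in $U$, because of terms like $(-h(B_l\oplus\check W_l,u))^+$ and the indicators $\delta_l$, so ``extend by analyticity'' only works after restricting to connected components of the complement of the hyperplane arrangement $D=\bigcup H_{c_1,c_2}$ and constructing explicit paths that stay inside the domain of analyticity. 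These are fixable, but they are not side issues: the clean decomposition $\sum_k\sum_F \Ha^k(F)\psi_{w,k}(N(F,P))$ you assert in Step 1 must also absorb the edge-correction terms (compare the $\cot\theta_{ij}$, $\csc\theta_{ij}$ terms in Corollary \ref{2Dvol}), which is exactly why the paper works with explicit coefficients of the monomials $\prod_{i\in I}t_i$ rather than with an abstract face functional.

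The decisive gap is Step 3. The involution you invoke --- swapping foreground and background labels and claiming it induces $u\mapsto -u$ --- is not available: Definition \ref{funddef} assumes the weights are motion and reflection invariant, but nothing relates $w(B_l,W_l)$ to $w(W_l,B_l)$; moreover, reflecting the set $P$ sends a configuration $(B,W)$ to the geometrically reflected pair $(\check B,\check W)$, not to $(W,B)$, so there is no symmetry of the estimator from which your parity splitting of $\psi_{w,k}$ could come. The paper obtains the sign obstruction from a different source: after isolating the coefficient of $\prod_{i\in I}t_i$ for a $(d-q)$-face $F_I$ and deforming the normal vectors to an orthonormal position, the estimator's coefficient $\bar H_I$ is a homogeneous polynomial of degree $d-q$ in the normal-cone directions built from rotation-invariant Gram--Schmidt coefficients, so a rotation $R\in SO(d)$ reversing the $(d-q)$-dimensional space $\lin(u_i',i\in I)^\perp$ flips its sign when $d-q$ is odd, whereas the true coefficient $\bar G_I\equiv 1$ is unchanged; connectedness of $SO(d)$ and analyticity then give bias for $\nu$-a.e.\ $P$. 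Without a replacement for this mechanism your odd case does not go through. Likewise, the even case cannot be delegated to ``straightforward adaptations'' of \cite{kampf2,rataj,am2}: those counterexamples concern $2\times\dots\times2$ configurations in orthogonal lattices, while the present theorem requires general $n$, general lattices, and failure on a set of positive $\nu$-measure; the paper needs a separate and genuinely delicate argument (isotopy classes with a $(d-q)$-face combinatorially isotopic to a cube, the auxiliary coefficients $\bar H_{I'}$ and $\bar K_{I'}$, and a rotation argument in $SO(K^\perp)$ using $\dim K^\perp=q+1>1$), none of which is sketched in your proposal.
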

This generalizes the results of~\cite{tajine} to dimensions $d>2$ and the results of \cite{kampf2} to $n\times \dotsm \times n$ configurations with $n>2$ in general lattices and with the sets for which an asymptotic bias occur chosen from an even smaller set class. As simple examples, one may take almost all rotations of almost all orthogonal boxes $\bigoplus_{i=d}^d [0,t_ie_i]$ where $t_1,\dots ,t_d\in \R$ and $e_1,\dots,e_d\in \R^d$ is the standard basis. 

If an algorithm were only asymptotically biased for a very small class of sets, for instance orthogonal boxes, this could well be acceptable in practice where objects are often randomly shaped with a probability of zero for hitting this class. Hence the theorem is stated for all polytopes in a set of positive $\nu$-measure. The reasonableness in choosing the measure  $\nu $ on $\mathcal{P}^d$ may be disputed, see the discussion in Section \ref{polspace}. 

In the case $q=0$, we can obtain a similar theorem, again generalizing the results of \cite{kampf2}:
\begin{theorem}\label{main2}
Any local algorithm for ${V}_0$ in the sense of Definition \ref{funddef} is asymptotically biased (and hence not multigrid convergent) on $\mathcal{P}^d$ if $d>1$.
\end{theorem}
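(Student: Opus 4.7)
The plan is to derive the asymptotic expansion of $E[\hat V_0(P\cap a\La)]$ in powers of the lattice spacing $a$ (taking a uniform random translation of $\La$) and then impose the constraint that the limit equals $V_0(P)=1$ for every $P\in\mathcal{P}^d$. By stratifying the configuration cells according to which face of $P$ each cell first meets and applying the tangent-cone linearisation of $P$ along that face, I expect an expansion of the form
\begin{equation*}
E\bigl[\hat V_0(P\cap a\La)\bigr]=\sum_{k=0}^{d}a^{-k}\,\mathcal{I}_k(P)+o(1),
\end{equation*}
where $\mathcal{I}_k(P)$ is an integral over the $k$-dimensional skeleton of $P$ of a density determined by the weights $\{w(C)\}$, the lattice $\La$, and the local tangent cone along that stratum. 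This is the same machinery underlying Theorem \ref{main1}, specialised to $q=0$.

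For asymptotic unbiasedness each positive-degree coefficient $\mathcal{I}_k(P)$, $k\ge 1$, must vanish on all of $\mathcal{P}^d$. The equation $\mathcal{I}_d(P)=w(\mathbf{1})V_d(P)=0$ forces the weight $w(\mathbf{1})$ of the all-foreground configuration to vanish. The equation $\mathcal{I}_{d-1}(P)=\sum_{F}g(u_F)V_{d-1}(F)=0$ must hold for every compact convex polytope with non-empty interior; since any direction $u\in S^{d-1}$ can be realised as a facet normal of some such polytope, this forces the facet density $g\colon S^{d-1}\to\R$ to vanish almost everywhere. Iterating downward through $k=d-2,\dots,1$ forces each intermediate skeleton density to vanish as well.

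There remains the constraint $\mathcal{I}_0(P)=1$ for all $P\in\mathcal{P}^d$, where $\mathcal{I}_0$ is a sum over the vertices of $P$ of a function $h$ depending only on the local tangent cone at each vertex (together with its position in the lattice fundamental cell). Because $h$ is a finite linear combination with coefficients $w(C)$ of lattice-dependent densities, it lies in a finite-dimensional function space that is manifestly not rotation-invariant. To reach a contradiction I would exhibit a one-parameter family $P_t\in\mathcal{P}^d$ obtained by rotating a fixed simplex through $SO(d)$ and argue that $t\mapsto\sum_v h(v(t))$ cannot be the constant function~$1$. The heuristic is a Gauss--Bonnet style identity: in order for $\sum_v h(v)=1$ to hold universally, $h(v)$ would have to reproduce the normalised angle defect at $v$, which is rotation-invariant, but no non-trivial rotation-invariant function of the tangent cone lies in the lattice-dependent span generated by $\{w(C)\}$.

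I expect the main obstacle to be precisely this last step, namely rigorously checking that under the constraints produced by the higher-dimensional strata the remaining freedom in the weights is insufficient to make $\mathcal{I}_0$ identically $1$ on $\mathcal{P}^d$. A useful reduction is to settle $d=2$ first by an explicit direct argument and then handle larger $d$ by taking product polytopes $P=P'\times[0,1]^{d-2}$ of a planar polygon $P'$ with a small box, thereby reducing the $d$-dimensional problem to the planar case.
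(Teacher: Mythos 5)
Your overall skeleton (expansion of $E\hat{V}_0$ in powers of $a$ via the face strata, reduction to homogeneous weights, settling $d=2$ first and then passing to higher $d$ by prisms $P'\times[0,1]^{d-2}$) matches the paper's strategy, but the step you yourself flag as the main obstacle is exactly the heart of the proof, and it is missing; moreover the route you sketch for it would not work. The paper proves (Proposition \ref{nalg}) that on convex sets with no $n$-critical boundary points -- i.e.\ all interior angles bounded below by a constant depending on $n$ -- there is a local algorithm that returns the exact value $V_0=1$ at sufficiently high resolution, for every position and rotation. Hence rotating a \emph{fixed} simplex with non-degenerate angles cannot yield a universal contradiction; any counterexample family must involve degenerating angles. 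Even for a parallelogram with one sharp angle, rotation alone is not enough: the paper's computation (Corollary \ref{2Dvol} and the proof of Theorem \ref{main2} for $d=2$) shows that after the $a^{-1}$ terms are forced to cancel, the limit is a linear combination of $\cos^2\varphi\cot\psi-\sin\varphi\cos\varphi$, $\sin^2\varphi\cot\psi+\sin\varphi\cos\varphi$ and $\sin\varphi\cos\varphi\cot\psi+\cos^2\varphi-\sin^2\varphi$; for a fixed interior angle $\psi$ this system \emph{can} equal the constant $1$ on a range of $\varphi$, so one must vary both the rotation $\varphi$ and the angle $\psi$ (within a component of $S^1\setminus D_\La$, where the support-function maximizers are locally constant) and use linear independence of these three functions from $1$. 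The blow-up factor $\csc\psi$ coming from the two $n$-critical vertices of the thin parallelogram is the mechanism that kills unbiasedness; your Gauss--Bonnet heuristic (``$h$ would have to reproduce the angle defect, which is rotation-invariant'') is plausible but is precisely the statement that needs a quantitative proof, and nothing in your proposal supplies it.

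Two further inaccuracies. First, your claim that vanishing of $\mathcal{I}_{d-1}$ on all of $\mathcal{P}^d$ forces the facet density to vanish almost everywhere is too strong: surface area measures of polytopes are centred ($\sum_F \Ha^{d-1}(F)u_F=0$), so densities of linear type are annihilated automatically; the paper only extracts, and only needs, the cancellation $g(u)+g(-u)=0$ for the specific normals of the parallelogram, obtained by varying the side lengths $s_1,s_2$. Second, the machinery of Theorem \ref{main1} does not ``specialise to $q=0$'': that theorem explicitly excludes $q=0$, because its key step (a rotation changing signs on $\lin(u_i',i\in I)^\perp$, resp.\ on a subspace of dimension $q+1>1$) degenerates there; this is exactly why the paper develops the separate $n$-critical-vertex argument for $V_0$. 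So as it stands the proposal reduces the problem correctly but leaves the decisive vertex-level contradiction unproved, and the family of sets you propose to realise it is one on which the contradiction provably cannot be realised for every algorithm.
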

However, constructing counter examples is now harder. 
In fact, in $\R^2$ there is a sequence of local algorithms $\hat{V}_0^n$  for $n\in \N$ based on $n\times \dots \times n$ configurations such that $\hat{V}_0^n$ is multigrid convergent for all $P\in \mathcal{P}^d$ (or even all compact convex sets having interior points) having no interior angles less than $\psi_n\in \R$ where $\lim_{n\to 0} \psi_n=0$. In particular, for any $P\in \mathcal{P}^d$ there is an $N\in \N$ such that $\hat{V}_0^n(P)=V_0(P)$ whenever $n\geq N$ and the resolution is sufficiently high. Thus, if one studies convex particles with a lower bound on the interior angles, there exists a multigrid convergent local algorithm for $V_0$.  The explicit construction of these algorithms and the precise conditions on the weights are given in Section \ref{euler}.

As in \cite{kampf2}, the proof of Theorem \ref{main2} goes by first constructing a counter example $P\subseteq \R^2$ and then generalizing this to higher dimensions by means of the prism $P \times \bigoplus_{i=3}^d [0,e_i]$. This approach also provides the following generalization of Kampf's results:
\begin{theorem}\label{main3}
For $0\leq q\leq d-2$, any local algorithm for ${V}_q$ as in Definition \ref{funddef} has an asymptotic worst case bias of at least 100\% on $\mathcal{P}^d$.
\end{theorem}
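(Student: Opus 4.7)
The plan is to extend the approach of Theorem~\ref{main2} by tracking the magnitude of the bias, and then to lift it from two dimensions via a prism construction. The first step is to sharpen the 2D counter example for $V_0$: for every local algorithm $\hat V_0$ on $\R^2$, I would produce $P \in \mathcal{P}^2$ whose asymptotic expected value under $\hat V_0$ lies outside $(0, 2)$, so that the asymptotic bias is at least $1 = V_0(P)$. The construction used for Theorem~\ref{main2} should already yield such a $P$ once one records the size of the discrepancy witnessing the bias rather than merely its non-vanishing.

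Next, I would lift to arbitrary $d \geq 2$ and $0 \le q \le d-2$ via the prism
\begin{equation*}
  P'_\lambda \;=\; \lambda P \,\times\, C, \qquad C \;=\; \bigoplus_{i=3}^d [0, e_i],
\end{equation*}
with $P$ the 2D counter example and $\lambda > 0$ small. Hadwiger's product formula for intrinsic volumes yields
\begin{equation*}
  V_q(P'_\lambda) \;=\; V_q(C) + \lambda\, V_1(P)\, V_{q-1}(C) + \lambda^2\, V_2(P)\, V_{q-2}(C),
\end{equation*}
so $V_q(P'_\lambda) \to V_q(C) > 0$ as $\lambda \to 0$.

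For a local algorithm $\hat V_q$ on $\R^d$, I would then show that its asymptotic expected value on $P'_\lambda$ decomposes analogously as
\begin{equation*}
  E\bigl[\hat V_q(P'_\lambda)\bigr] \;=\; V_q(C)\, E\bigl[\hat V_0^{(2)}(\lambda P)\bigr] + O(\lambda)
\end{equation*}
for some induced local algorithm $\hat V_0^{(2)}$ on $\R^2$ for $V_0$. For product lattices this is a direct Fubini-type identity on configuration counts in the $C$-directions; for a general lattice in $\R^d$, one averages over translations parallel to $C$. Because the 2D bias for $V_0$ is scale-invariant, the first step ensures that $\hat V_0^{(2)}$ has asymptotic bias at least $1$ on $\lambda P$. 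Consequently the asymptotic bias of $\hat V_q$ on $P'_\lambda$ is at least $V_q(C) + O(\lambda)$, while $V_q(P'_\lambda) \to V_q(C)$, so the ratio of bias to value tends to $1$ as $\lambda \to 0$. This gives the claimed asymptotic worst case bias of at least $100\%$ on $\mathcal{P}^d$.

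The main technical obstacle is the separability argument: isolating a genuine 2D local algorithm for $V_0$ from the restriction of $\hat V_q$ to prisms $\lambda P \times C$ when the $d$-dimensional lattice does not factor as a product of a 2D lattice and a $(d-2)$-dimensional one. The key tool will be an averaging procedure over translations in the $C$-factor, collapsing the $d$-dimensional configuration counts into 2-dimensional ones weighted by the intrinsic volumes of $C$; verifying that the remainder is genuinely $O(\lambda)$ uniformly in the resolution limit is where the care has to go.
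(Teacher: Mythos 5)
Your route is the paper's own: the quantitative 2D statement you want in step 1 is exactly Corollary \ref{worst}, the lift is the same prism over $\bigoplus_{i=3}^d[0,e_i]$, the Fubini identity for a product lattice and the product formula $V_q(\lambda P\times C)=\sum_j\lambda^jV_j(P)V_{q-j}(C)$ are both correct, and the endgame (shrink the planar cross-section so the relative bias tends to $1$) is also how the paper concludes. The gap is in the step you lean on most: ``because the 2D bias for $V_0$ is scale-invariant, $\hat V_0^{(2)}$ has asymptotic bias at least $1$ on $\lambda P$.'' Definition \ref{funddef} allows arbitrary functions $w_l(a)$, and even if the original weights are homogeneous of degree $q$, the induced planar weights produced by your Fubini identity are of the form $\tilde w_{l'}(a)=\sum_{l''}w(a)\,EN_{l''}(C\cap a\La'')$ (sum over configurations $l''$ in the complementary directions, with $\La''$ the factor lattice), hence genuinely $a$-dependent and not homogeneous of degree $0$. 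For such an estimator the asymptotic mean on $\lambda P$ is not a scale-invariant quantity, so the single bad parallelogram $P$ delivered by step 1 need not stay bad along $\lambda P$ as $\lambda\to0$; what is scale-invariant is only the $\lambda^0$-part of the limit, and identifying that part with the mean of a constant-weight planar estimator is precisely the content of the paper's computation (the explicit weights $w_l'$ in Section 3.6, resting on Lemma \ref{volpiece} and the proof of Theorem \ref{main1'}). The paper also needs a separate reduction for inhomogeneous weights: either the limit contains monomials of degree $\neq q$ in the edge lengths, in which case scaling the prism already makes the relative bias unbounded, or one may pass to homogeneous weights by Corollary \ref{weights}. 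Your proposal has no substitute for either ingredient, and without them the interchange of the limits $a\to0$ and $\lambda\to0$ is not justified.

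The second weak point is the general lattice. Averaging over translations ``parallel to $C$'' does not obviously produce a planar local algorithm: for a non-product lattice the configuration counts on $\lambda P\times C$ do not factor, and the directions spanned by $C$ need not contain any lattice vectors at all, so there is nothing intrinsic to average over. The clean repair, which is also the paper's, is to transport the product situation through a linear map $M$ with $M(\Z^d)=\La$: configuration counts satisfy $N_l(X\cap a\Z^d)=N_l(M(X)\cap a\La)$, so one takes the skewed prism $M(\lambda P\times C)$ as the counterexample, for which $V_q(M(\lambda P\times C))=V_q(M(\{0\}\times C))+O(\lambda)$ with positive leading term since $q\le d-2$, and the product-lattice argument applies verbatim. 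With these two repairs (homogeneity reduction plus identification of the scale-invariant induced planar estimator, and the linear-map trick in place of the averaging), your plan becomes the paper's proof; as written, those are the places where it does not yet go through.
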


We finally move on to the case of $r$-regular sets. Using the main results of \cite{rataj} and \cite{am2}, we show the following generalization of \cite{am2} to $n>2$ and arbitrary lattices:
\begin{theorem}\label{mainrreg}
For $q=d-1$ and, if $d\geq 3$, also for $q=d-2$, any local algorithm for ${V}_{q}$ as in Definition \ref{funddef} with homogeneous weights is asymptotically biased (and hence not multigrid convergent) on the class of $r$-regular sets.
\end{theorem}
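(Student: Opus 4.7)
The plan is to reduce the statement to an identity that a certain directional function associated to the algorithm must satisfy, and then show that this identity fails for any choice of homogeneous weights. The starting point in both cases is an asymptotic mean formula on $r$-regular sets of the form
\begin{equation*}
\lim_{a\to 0} E\bigl[\hat V_q^a(X)\bigr]=\int_{\partial X} f_{w,\La,q}(n(x))\, \phi_q(x)\,\mathcal{H}^{d-1}(dx),
\end{equation*}
where $n(x)$ is the outer unit normal at $x\in\partial X$, $\phi_q$ is $1$ for $q=d-1$ and the (additive) mean curvature at $x$ for $q=d-2$, and $f_{w,\La,q}:S^{d-1}\to\R$ is determined solely by the weights $w$ and the lattice $\La$. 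For $q=d-1$ on orthogonal lattices with $n=2$ this is essentially \cite[Theorem~5]{rataj}, and for $q=d-2$ the analogous identity is established in \cite{am2}.

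The first step, which I regard as the main technical obstacle, is to lift these two formulas from $2\times\dots\times 2$ configurations on an orthogonal lattice to $n\times\dots\times n$ configurations on an arbitrary lattice $\La$. The proofs in the cited papers proceed by localising near a boundary point $x\in\partial X$, replacing $X$ by the tangent half-space, rescaling the lattice, and applying Weyl equidistribution to the random translation; none of these ingredients really uses the orthogonality of $\La$ or the size $n=2$ of the configuration window. I would reorganise their argument so that the combinatorial data of a configuration enters only through its homogeneous weight, and the dependence on $\La$ and $n$ is absorbed into the finite-dimensional parameter space on which $f_{w,\La,q}$ lives. This yields the displayed formula in the generality of Definition~\ref{funddef}.

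Once the formula is available, the proof concludes quickly. Asymptotic unbiasedness on the class of $r$-regular sets forces $f_{w,\La,q}(u)=1$ for $\mathcal{H}^{d-1}$-almost every $u\in S^{d-1}$: given any direction $u_0$, a small spherical cap of radius $R\gg r$ produces an $r$-regular test set whose normals cover an arbitrarily small neighbourhood of $u_0$ and on which $\phi_q$ is either identically $1$ or a nonzero constant, so any deviation of $f_{w,\La,q}$ from $1$ on a set of positive $\mathcal{H}^{d-1}$-measure translates into a bias on some $r$-regular set. However, homogeneity of the weights forces $f_{w,\La,q}$ to be a finite linear combination of a fixed finite family of bounded measurable functions on $S^{d-1}$ (indexed by the finitely many possible configurations modulo homogeneity), and the constant function $\mathbf{1}$ does not lie in this finite-dimensional span. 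The cleanest way to exhibit this is to pick two directions $u_1,u_2\in S^{d-1}$ with $u_1$ nearly parallel to a hyperplane of $\La$ and $u_2$ a small rotation of $u_1$: for $u_1$ all configuration contributions degenerate in a predictable way while for $u_2$ they do not, and one derives two linear relations on $w$ that are mutually inconsistent with $f_{w,\La,q}(u_1)=f_{w,\La,q}(u_2)=1$.

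For $q=d-2$ the same scheme applies verbatim, with the additional simplification that one may also vary the radius of a test ball to obtain an extra constraint: a ball of radius $R$ contributes $\int_{S^{d-1}}f_{w,\La,d-2}(u)\,\mathcal{H}^{d-1}(du)$ (up to normalisation) independently of $R$, whereas $V_{d-2}$ of the ball scales with $R$, already yielding an incompatibility that prevents asymptotic unbiasedness. Thus, apart from the lifting step, the hard work of the proof is entirely contained in the two input results \cite{rataj} and \cite{am2}, and the theorem follows as a direct consequence of the formulas they supply.
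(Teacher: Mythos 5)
Your proposal misstates the key input for $q=d-2$ and then builds on the misstatement. The formula from \cite{am2} (Theorem \ref{Q} in the paper) is \emph{not} of the form $\int_{\partial X} f(n(x))\,H(x)\,d\Ha^{d-1}$ with $H$ the mean curvature: the integrand is $\sum_j k_j\bigl(-\langle \beta_l(n),\eps_j\rangle^2+\langle\beta_l(n),n\rangle^2+\langle\omega_l(n),\eps_j\rangle^2-\langle\omega_l(n),n\rangle^2\bigr)$, i.e.\ each principal curvature $k_j$ is weighted by a factor depending on the principal \emph{direction} $\eps_j$ relative to the configuration points, so the normal and the second fundamental form do not decouple. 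Your ``additional simplification'' via balls also fails even under your own ansatz: for a ball of radius $R$ the curvature factor scales like $1/R$ and the surface measure like $R^{d-1}$, so the putative limit scales like $R^{d-2}$, exactly as $V_{d-2}(B(R))$ does; varying $R$ yields one linear normalisation constraint on the weights, not an incompatibility. This is precisely why the paper has to construct the two-parameter family $X(R,r)$ (spherical cap glued to a rounded lens), let $r\to0$, differentiate in the opening angle $\theta$, and exploit that the estimator side is locally a trigonometric polynomial in $\theta$ while the true side produces the term $\theta\cos\theta\sin^{d-3}\theta$ --- a genuinely anisotropic argument that your ``same scheme verbatim'' does not contain.

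For $q=d-1$ there are two further gaps. First, the localisation step ``a small spherical cap ... whose normals cover an arbitrarily small neighbourhood of $u_0$'' is not available: a compact $r$-regular set has a surjective Gauss map, so no admissible test set has normals confined to a small cap; to localise you must compare sets agreeing outside a small region or pass to a limit, which is exactly what the paper does by taking $X(r)=B(r)\oplus\bigoplus_i[0,v_i]$ and letting $r\to0$ so that the extra boundary $Y$ has vanishing area. Second, the crux --- that the constant function $1$ cannot be represented by $u\mapsto\det(\La)^{-1}\sum_l w_l(-h(B_l\oplus\check W_l,u))^+$ --- is only asserted; your ``two directions, degenerate versus non-degenerate'' sketch gives no actual pair of inconsistent linear relations. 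In the paper this is where all the work sits: it is supplied by the polytope analysis (Theorem \ref{main1'} and Corollary \ref{box}), which produces an orthogonal box on which the estimator is asymptotically biased, and the $r$-regular counterexample is obtained by smoothing that box. (A correct shortcut along your lines would have to argue, e.g., that on each component of $S^{d-1}\setminus D$ the function above is the restriction of a fixed linear functional, which cannot equal $1$ on a set of positive measure --- but then one still needs Minkowski-type existence or a comparison construction to justify the reduction ``unbiasedness on $r$-regular sets forces $f\equiv1$ a.e.'', and none of this is in your text.) Finally, the ``lifting'' you present as the main obstacle is largely moot: \cite[Theorem 5]{rataj} is already stated for general structuring elements and lattices, so the paper simply cites it; the real difficulty of the theorem lies in the counterexample constructions you have skipped.
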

The definition of homogeneous weights is given in De\-fi\-ni\-tion \ref{weightcond} below.
For $0<q<d-2$, the asymptotic behavior of local estimators for $V_q$ is not well enough understood to determine whether asymptotically unbiased estimators exist. However, Theorem \ref{mainrreg} suggests that the Euler characteristic is the only $V_q$ with $q<d$ that allows an asymptotically unbiased local estimator on the class of $r$-regular sets.

\section{Local digital algorithms} \label{local}


\subsection{Digital estimators}\label{de}
We first set up some notation and terminology and introduce digital estimators in general.

Let $\xi=\{\xi_1,\dots,\xi_d\}$ be a positively oriented basis of $\R^d$ and let $\La$ denote the lattice spanned by $\xi$.
Let $C_\xi=\bigoplus_{i=1}^d [0,\xi_i]$ be the unit cell of the lattice with volume $\det(\La)$. For $c\in \R^d$, we let $\La_c=\La+c$ denote the translated lattice.

Now suppose $X\subseteq \R^d$ is some subset of $\R^d$. We use the binary digitization model for a digital image, see e.g.\ \cite{OM}. That is, we think of a digital image as the set $X\cap a\La_c \subseteq a\La_c$ where $a>0$ is the lattice distance. This set contains the same information about $X$ as the Gauss digitization \cite[Definition 2.7]{digital}, which is the union of all translations of $C_\xi$ having midpoint in $X\cap a\La_c$.

Let $V:\mathcal{S} \to \R $ be a function defined on some class $\mathcal{S}$ of subsets of $\R^d$. We want to estimate this function based on digital images of elements of $\mathcal{S}$. 
\begin{definition}\label{digaldef}
By a digital algorithm $\hat{V}$ for $V$, we mean a collection of functions $\hat{V}^{a\La_c}: \mathcal{P}(a \La_c )\to \R$ for every $a>0$ and $c\in C_\xi$ where $\mathcal{P}(a \La_c)$ is the power set of $a \La_c$. 
For $X\in \mathcal{S}$ we use $\hat{V}^{a\La_c}(X):=\hat{V}^{a\La_c}(X\cap a\La_c)$ as a digital estimator for $V(X)$.

A digital algorithm $\hat{V}$ is said to be 
\begin{itemize}
\item \textbf{translation invariant} if 
\begin{equation*}
\hat{V}^{a\La_0}(S)=\hat{V}^{a\La_c}(S+ac+az)
\end{equation*}
 for all $S\in \mathcal{P}(a\La)$, $c\in C_\xi$, $z\in \La$, and $a>0$.
\item \textbf{rotation (reflection) invariant} if 
\begin{equation*}
\hat{V}^{a\La_c}(S)=\hat{V}^{a\La_{Rc}}(RS)
\end{equation*}
for all $S\in \mathcal{P}(a\La)$, $c\in C_\xi$, $a>0$, and all rotations (reflections) $R\in SO(d)$ preserving $a\La$.
\item \textbf{motion invariant} if it is both translation and rotation invariant.
\end{itemize}
\end{definition}

\begin{remark}
Sometimes, e.g.\ in \cite{serra}, $\hat{V}^{a\La}$ is only defined for $a$ belonging to some sequence $a_k\to 0$ (typically, $a_k=2^{-k}$). 
Though a weaker requirement, this will not affect the non-existence theorems of this paper, so we consider only the case of the definition. 

Similarly, the algorithm is sometimes only defined for a subset of $\mathcal{P}(a \La_c )$, e.g.\ finite sets, or only for $c=0$, but of course, such a definition can easily be extended.
\end{remark}

\subsection{Various convergence criteria}\label{convergence}
Having defined a digital algorithm, the next question is how it should relate to $V(X)$.
Obviously, many different sets may have the same digital image, so $\hat{V}^{a\La_c}(X)$ will typically not give the correct value. However, $X\cap a\La_c$ will contain more and more information about $X$ as $a$ decreases. Thus it is reasonable to require that $\hat{V}^{a\La_c}(X)$ converges to the correct value when the lattice distance goes to zero.
In \cite{digital}, this is called multigrid convergence and the formal definition here is as follows:
\begin{definition}\label{mc}
A digital algorithm $\hat{V}$ for $V: \mathcal{S} \to \R $ is called multigrid convergent if for all $X\in \mathcal{S}$,
\begin{equation*}
\lim_{a\to 0} \hat{V}^{a\La_0}(X)={V}(X).
\end{equation*}
\end{definition}
Note that the definition only involves the non-trans\-la\-ted lattice $\La_0$.
This definition does cause some problems. It depends on the choice of origin with respect to which the lattice is scaled. For instance, it could be that $\hat{V}^{a\La_c}(X)$ does not converge to $V(X)$, even if the algorithm is translation invariant. One could of course repair this by requiring $\lim_{a\to 0} \hat{V}^{a\La_c}(X)={V}(X)$ for all $c\in C_\xi$. Thus the following stronger condition would be natural:
\begin{definition}
A digital algorithm $\hat{V}$ is called uniformly multigrid convergent if for all $X\in \mathcal{S}$ and $\eps>0$ there is a $\delta>0$ such that 
\begin{equation*}
|\hat{V}^{a\La_c}(X)-{V}(X)| \leq \eps 
\end{equation*}
for all $c\in C_\xi$ and $a < \delta$. 
\end{definition}
In other words, the convergence $\hat{V}^{a\La_c}(X)\to {V}(X)$ is uniform with respect to translations of $\La$. An e\-qui\-va\-lent formulation is that for every pair of sequences $a_k \to 0^+$ and $c_k \in \R^d$, 
\begin{equation*}
\lim_{k\to \infty} \hat{V}^{a_k\La_{c_k}}(X) = {V}(X).
\end{equation*}

Multigrid convergence is in many situations a much too strong requirement. Of the examples mentioned in the introduction, only the volume estimator on the class $\mathcal{C}_\partial$ defined below and the estimator for the Euler cha\-rac\-te\-ris\-tic of $r$-regular sets is multigrid convergent.

Another way of removing the dependence on the origin is to consider a uniform random translation of the lattice. This is called the design based setting and the observed image is now a random set $X\cap a\La_c$ where $c\in C_\xi$ is a uniform random translation vector. A di\-gi\-tal algorithm is called integrable if $c\mapsto \hat{V}^{a\La_c}(X)$ is integrable over $C_\xi$ for all $a>0$ and $X\in \mathcal{S}$, i.e.\ the mean $E\hat{V}^{a\La_c}(X)$ is finite for all $X\in \mathcal{S}$. The natural requirement for an integrable digital algorithm is that $\hat{V}^{a\La_c}(X)$ is unbiased, at least when $a$ tends to zero. More formally:
\begin{definition}
Let $\hat{V}$ be an integrable digital algorithm for $V$ defined on a class $\mathcal{S}$ of subsets of $\R^d$. Then $\hat{V}$ is called asymptotically unbiased if for all $X \in \mathcal{S}$, 
\begin{equation*}
\lim_{a\to 0}E\hat{V}^{a\La_c}(X) = V(X).
\end{equation*}
\end{definition}

It is clear that uniform multigrid convergence implies asymptotic unbiasedness. So does multigrid convergence in most nice situations, as the next proposition shows.
Let $\mathcal{C}_\partial$ denote the collection of compact subsets of $\R^d$ whose boundary has $\Ha^{d}$-measure zero where $\Ha^{k}$ denotes the $k$-dimensional Hausdorff measure.
\begin{proposition}
Suppose $V: \mathcal{S} \to \R$ is a translation invariant function defined on some $\mathcal{S}\subseteq \mathcal{C}_\partial $ and that $\hat{V}^{a\La_c}$ is a translation invariant digital estimator for $V$.  Then multigrid convergence implies asymptotic unbiasedness.
\end{proposition}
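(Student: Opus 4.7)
The plan is to unfold the design-based expectation into a spatial average of the unshifted multigrid estimator applied to translates of $X$, and then exchange the limit with the integral.

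First I would use the translation invariance of $\hat V$ (with $z = 0$ in its defining identity) to obtain $\hat V^{a\La_c}(X) = \hat V^{a\La_0}(X - ac)$ for $c \in C_\xi$. Substituting and changing variables $t = ac$ gives
\[
E\hat V^{a\La_c}(X) = \frac{1}{|aC_\xi|} \int_{aC_\xi} \hat V^{a\La_0}(X - t)\, dt.
\]
Taking $c = 0$ and $z \in \La$ in translation invariance shows that $t \mapsto \hat V^{a\La_0}(X - t)$ is $a\La$-periodic, so its average over the fundamental cell $aC_\xi$ coincides with its average over the fixed cell $C_\xi$ (exactly when $1/a$ is an integer, and up to an $O(a)$ boundary term otherwise):
\[
E\hat V^{a\La_c}(X) = \frac{1}{|C_\xi|} \int_{C_\xi} \hat V^{a\La_0}(X - t)\, dt + o(1).
\]

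Next, for each fixed $t \in C_\xi$ the translate $X - t$ lies in $\mathcal{S}$ (translation invariance of $V$ forces $\mathcal{S}$ to be closed under translations), so multigrid convergence applied to $X - t$ gives $\hat V^{a\La_0}(X - t) \to V(X - t) = V(X)$ as $a \to 0$. An application of the dominated convergence theorem then yields $E\hat V^{a\La_c}(X) \to V(X)$, which is asymptotic unbiasedness.

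The principal obstacle is justifying the dominated convergence step, since multigrid convergence supplies only pointwise convergence of the integrand with no a priori uniform bound in $(a,t)$. This is where the hypothesis $\mathcal{S} \subseteq \mathcal{C}_\partial$ enters: because $\partial X$ has $\Ha^d$-measure zero, the set of translations $t \in C_\xi$ that place some lattice point of $a\La_0$ on $\partial X$ is Lebesgue-null, and outside this null set $(X - t) \cap a\La_0$ is locally constant in $t$. Combined with a mild uniform boundedness of the estimator as $a \to 0$ (a standard feature of the local algorithms that motivate the proposition), this supplies the domination needed to interchange limit and integral.
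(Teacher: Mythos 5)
Your reduction steps are fine and parallel the paper's starting point: translation invariance gives $\hat V^{a\La_c}(X)=\hat V^{a\La_0}((X-ac)\cap a\La)$, so the mean is an average of $t\mapsto \hat V^{a\La_0}(X-t)$ over a cell, and this function is $a\La$-periodic. The genuine gap is the final interchange of limit and integral. Dominated convergence needs an integrable majorant, and none is available from the hypotheses: the proposition is stated for an arbitrary translation invariant digital estimator, and multigrid convergence applied to each translate $X-t$ only yields, for each fixed $t$, a bound on $\hat V^{a\La_0}(X-t)$ for $a<\delta(t)$ with $\delta(t)$ depending on $t$. The ``mild uniform boundedness of the estimator as $a\to 0$'' that you invoke is an extra assumption, not a standard feature even of local algorithms: for weights $w_l(a)\sim a^q$ with $q<d$ the individual terms $w_l(a)N_l$ blow up like $a^{q-d}$ for general $X\in\mathcal C_\partial$, and uniform control of $\hat V^{a\La_0}(X-t)$ over $(a,t)$ is essentially the statement one has to prove, not one that can be assumed. (The same missing bound also undermines your $o(1)$ estimate for the leftover boundary strip when passing from the cell $aC_\xi$ to $C_\xi$; and the parenthetical claim that translation invariance of $V$ ``forces'' $\mathcal S$ to be translation closed is not an argument, though the paper implicitly needs translates of $X$ in $\mathcal S$ as well.)

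The paper avoids any domination argument by upgrading pointwise convergence to essentially uniform convergence in the translation: it shows that for every $\eps>0$ there is $\delta>0$ with $|\hat V^{a\La_0}((X-ac)\cap a\La)-V(X)|<\eps$ for all $a<\delta$ and almost all $c\in C_\xi$, which immediately gives convergence of the mean. The proof is by contradiction: if not, there are scales $a_m\to 0$ and bad sets $W_m\subseteq C_\xi$ of positive measure; for fixed $a$ the digitization $(X-ac)\cap a\La$ takes only finitely many values in $c$, and, because $\partial X$ is Lebesgue-null (this is exactly where $\mathcal S\subseteq\mathcal C_\partial$ enters), each level set is either null or has non-empty interior, so $W_m$ has non-empty interior; a nested sequence of compact sets, built using the same $a\La$-periodicity you observed, then produces a single point $y$ that is a bad translation for infinitely many scales, contradicting multigrid convergence of $\hat V$ at the translate $X-y$. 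So where your argument needs a uniform bound it does not have, the paper replaces domination by this compactness/contradiction argument; to repair your proof you would have to prove such an essentially uniform statement anyway, at which point the dominated convergence step becomes superfluous.
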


\begin{proof}
Suppose $X\in \mathcal{S}$ and that $\hat{V}$ is multigrid convergent.
It will be enough to show that for all $\eps>0$ there is a $\delta>0$ such that for all $a<\delta $,
\begin{equation*}
|\hat{V}^{a\La_0}((X-ac)\cap a\La)-{V}(X)| < \eps 
\end{equation*}
holds for almost all $c\in C_\xi$. 

Assume this were not true. Then there would be an $\eps>0$, a sequence $a_m \to 0$, and $W_m \subseteq C_\xi$ with $\Ha^{d}(W_m)>0$ such that
\begin{equation*}
|\hat{V}^{a_m\La_0}((X-a_mc)\cap a_m\La)-{V}(X)| \geq \eps
\end{equation*}
for all $c\in W_m$.

First assume that $a$ is fixed. By compactness of $X$, $ (X-ac)\cap a\La $ can take only finitely many values in $\mathcal{P}(a\La)$ when $c\in C_\xi$. Thus also $\hat{V}^{a\La_0}((X-ac)\cap a\La)$ takes only finitely many different values for $c\in C_\xi$. 

Define 
\begin{equation*}
S_{z}=\{c\in C_\xi \mid az \in X -ac \} = C_\xi \cap (a^{-1}X-z)
\end{equation*}
for $z\in \La$ and note that only finitely many $S_z$ are non-empty. Thus for $S\subseteq a\La$
\begin{equation}\label{Sz}
\{c\in C_\xi \mid (X-ac)\cap a\La =S \} = \bigcap_{z\in S}S_z \cap \bigcap_{z\notin S}S_z^c  
\end{equation}
Observe that $ S_z^c \cap \indre C_\xi$ is open and equals $\indre C_\xi$ for all but finitely many $z$. 
The boundary of $S_z$ is contained in $\partial C_\xi \cup \partial (a^{-1}X-z)$ and therefore it has $\Ha^d$-measure zero. 
A point in \eqref{Sz} will either lie in the interior of all $S_z$, $z\in S$, or in the boundary of one of them. Thus~\eqref{Sz} will either have non-empty interior or $\Ha^d$-measure zero.

Since $W_m$ is the finite union of sets of the form \eqref{Sz} and $\Ha^d(W_m)>0$, it must have non-empty interior $U_m$.
Now choose $a_{m_{i}}$ inductively. First let $a_{m_{1}}=a_{1}$ and let $K_{m_1}\subseteq U_1$ be a compact set with non-empty interior. For $a_{m_2}$ sufficiently small, $a_{m_2}(C_\xi +z) \subseteq K_{m_1}$ for some $z$. Therefore we may choose a compact set with non-empty interior $K_{m_{2}}\subseteq  K_{m_{1}}\cap a_{m_{2}}(U_{m_{2}}+z)$. Continuing this way yields a decreasing sequence of compact sets $K_{m_i}$. In particular, $\bigcap K_{m_{i}}$ is non-empty, so we may choose $y\in \bigcap K_{m_{i}}$. By the translation invariance of $\hat{V}^{a\La_0}$  and $V$,
\begin{equation*}
|\hat{V}^{a_{m_i}\La_0}((X-y)\cap a_{m_i}\La )-V(X)|\geq \eps
\end{equation*}
for all $i$, so $\hat{V}$ is not multigrid convergent for $X-y$, which is a contradiction. \qed
\end{proof}

\subsection{Local digital algorithms}\label{lda}
In this section we introduce the notion of local algorithms. The name `local algorithm' is adopted from \cite[Definition 4.1]{klette} and \cite[Definition 8.3]{digital}. In these definitions, a local algorithm is really an algorithm for reconstructing the boundary of a solid in 2D or 3D as a union of line segments or polygons, respectively. The idea is that each of these building blocks should only depend on what the digital image looks like locally. From the reconstructed set, the length or surface area can be estimated as a sum of lengths or areas of the building blocks, respectively. The authors also refer to algorithms for estimating length and surface area arising in this way as local algorithms. 

We choose the following definition for general digital algorithms:
\begin{definition}\label{defgen}
A digital algorithm $\hat{V}$ is called local if there is a finite collection of pairs $(B_k,W_k)$ for $k\in K$ such that $B_k,W_k\subseteq \La$ are two finite disjoint sets and
\begin{align}
\begin{split}\label{Vgen}
\hat{V}^{a\La_c}(S)={}&  \sum_{k\in K}  \sum_{z\in \La} {w}_k(a,a(z+c)) \\
&\times  \mathds{1}_{\{a(B_k+z+c) \subseteq S, a(W_k+z+c) \subseteq  a\La_c \backslash S \} }
\end{split}
\end{align}
for all finite $S\subseteq a\La_c$. Here $\mathds{1}_A$ denotes the indicator function for the set $A$. The pair $(B_k,W_k)$ is called a configuration and the elements of $B_k$ are referred to as the `foreground' or `black' pixels, while $W_k$ is referred to as the set of `background' or `white' pixels. The functions ${w}_k:(0,\infty) \times \R^d \to \R$ are called the weights.
\end{definition}
Thus each occurrence of a translation of the configuration $(B_k,W_k) $ contributes to the estimate with a weight $w_k(a,z)$ depending only on the translation vector $z$ and the lattice distance $a$.

The definitions of \cite{klette} and \cite{digital} correspond to the collection
\begin{equation*}
\{(B_k,W_k) \mid  B_k \cup W_k = B({R}) \cap \La, B_k\cap W_k =\emptyset\}
\end{equation*}
for some $R>0$ where $B(R)$ denotes the ball of radius $R$.
Strictly speaking, their definition is not quite contained in Definition \ref{defgen}. However, all the examples of local algorithms for computing length and surface area mentioned in these references are of this form. 

We introduce a bit more notation:
An $n\times \dots \times n$ cell is a set of the form $C_z^n=(z+\bigoplus_{i=1}^d [0,n\xi_i))$ for $z\in \La$. The set of lattice points lying in such a cell is denoted by $C_{z,0}^n=C_z^n\cap \La $. A lattice point in $C_{0,0}^n$ has the form $x=\sum_{i=1}^d\lambda_i \xi_i$ for some $\lambda_i\in \{0,\dots ,n-1\}$ and we write $x=x_j$ where the index is given by
\begin{equation*}
j=\sum_{i=1}^d n^{i-1}\lambda_i.
\end{equation*}

An $n\times \dotsm \times n$ configuration is a pair $(B^n,W^n)$ where $B^n, W^n \subseteq C_{0,0}^n$ are disjoint with $B^n\cup W^n = C_{0,0}^n$. 
We index these by $(B_l^n,W_l^n)$, $l=0,\dots,2^{n^d}-1$, 
where a configuration $(B^n,W^n)$ is assigned the index 
\begin{equation*}
l=\sum_{i=0}^{{n^d}-1} 2^i \mathds{1}_{\{x_i\in B^n\}}.
\end{equation*}  

%

\begin{proposition}
For every local algorithm $\hat{V}$ there is an $n\in \N$ such that for all finite $S\subseteq a\La_c$, 
\begin{equation}
\begin{split}\label{Vnconf}
\hat{V}^{a\La_c}(S)= {}& \sum_{l=0}^{2^{n^d}-1} \sum_{z\in \La} \tilde{w}_l(a,a(z+c))\\ &\times \mathds{1}_{\{a(B_l^n + z+ c)\subseteq S, a(W_l^n + z +c ) \subseteq \R^d\backslash S\}}
\end{split}
\end{equation}
for suitable weights $\tilde{w}_l(a,z)$.
\end{proposition}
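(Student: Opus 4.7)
The plan is to reduce an arbitrary local algorithm in the sense of Definition \ref{defgen} to the standard $n \times \dotsm \times n$ form by (i) choosing $n$ large enough to contain every configuration and (ii) refining each configuration into a disjoint union of full $n \times \dotsm \times n$ configurations.

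First I would use that the collection $\{(B_k, W_k)\}_{k \in K}$ is finite and each $B_k \cup W_k$ is a finite subset of $\La$. Hence there exist $z_k \in \La$ and $n \in \N$ such that $B_k - z_k$ and $W_k - z_k$ are both contained in $C_{0,0}^n$ for every $k \in K$. In the inner sum of \eqref{Vgen} I substitute $z' = z + z_k$; the translated configurations $B_k' := B_k - z_k$ and $W_k' := W_k - z_k$ now sit inside $C_{0,0}^n$, at the cost of replacing the weight by $\hat{w}_k(a,y) := w_k(a, y - a z_k)$.

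Next, for each fixed $k$ I would rewrite the indicator by partitioning on the colors of the lattice points in $C_{0,0}^n \setminus (B_k' \cup W_k')$. Concretely, the event
\begin{equation*}
\{a(B_k' + z' + c) \subseteq S,\; a(W_k' + z' + c) \subseteq a\La_c \setminus S\}
\end{equation*}
is the disjoint union, over all indices $l \in \{0,\dots,2^{n^d}-1\}$ with $B_k' \subseteq B_l^n$ and $W_k' \subseteq W_l^n$, of the events
\begin{equation*}
\{a(B_l^n + z' + c) \subseteq S,\; a(W_l^n + z' + c) \subseteq a\La_c \setminus S\};
\end{equation*}
this uses only that $a\La_c = S \sqcup (a\La_c \setminus S)$ and that $B_l^n \cup W_l^n = C_{0,0}^n$. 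Since the $n \times \dotsm \times n$ configurations are automatically contained in $a\La_c$, the set $a\La_c \setminus S$ can be equivalently written $\R^d \setminus S$ as in \eqref{Vnconf}.

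Inserting this refinement into \eqref{Vgen}, interchanging the finite sums over $k$ and $l$, and collecting terms yields \eqref{Vnconf} with
\begin{equation*}
\tilde{w}_l(a,y) \;=\; \sum_{\substack{k \in K \\ B_k' \subseteq B_l^n,\; W_k' \subseteq W_l^n}} \hat{w}_k(a,y)\;=\; \sum_{\substack{k \in K \\ B_k - z_k \subseteq B_l^n,\; W_k - z_k \subseteq W_l^n}} w_k\bigl(a,\, y - a z_k\bigr).
\end{equation*}
There is no analytic content here; the argument is purely combinatorial and the only obstacle is bookkeeping, namely keeping track of the lattice translations $z_k$ so that the weights are shifted correctly and ensuring that, after enlarging the window, the resulting refinement of each configuration is genuinely a disjoint partition into $n \times \dotsm \times n$ configurations.
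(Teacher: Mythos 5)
Your argument is correct and follows essentially the same route as the paper: enlarge the window so that every $(B_k,W_k)$ fits, after a lattice translation, inside an $n\times\dots\times n$ cell, and split each indicator in \eqref{Vgen} into the disjoint union of the indicators of the full configurations $(B_l^n,W_l^n)$ extending it, which is legitimate because $B_l^n\cup W_l^n=C_{0,0}^n$ forces exactly one full configuration to occur at each $z$. Your explicit weight, the plain sum of the translated $w_k$ over all compatible $k$, is in fact the correct bookkeeping: the paper's printed formula carries an additional factor $\bigl(\sum_{m}\mathds{1}_{\{B_k-y\subseteq B_m^n,\,W_k-y\subseteq W_m^n\}}\bigr)^{-1}$ and omits the shift of the weight argument by the translation, but since exactly one full configuration fires at each $z$ no such normalization is needed (e.g.\ for the point-counting estimator with $B_k=\{0\}$, $W_k=\emptyset$, $n=2$ it would halve the estimate), so your version is the one that reproduces $\hat{V}^{a\La_c}(S)$ exactly.
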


\begin{proof}
By finiteness of $K$, there is an $n\in \N$ and a $y\in \La$ with $B_k,W_k \subseteq C_{y,0}^n$ for all $k\in K$. Thus, \eqref{Vgen} becomes an estimator of the form \eqref{Vnconf} with weights
\begin{align*}
w_l(a,z) ={}& \sum_{k\in K} {w}_k(a,z) \mathds{1}_{\{B_k-y\subseteq B_l^n, W_k-y \subseteq W_l^n\}}\\
& \times
\bigg(\sum_{m=0}^{2^{n^d}-1} \mathds{1}_{\{B_k-y\subseteq B_m^n, W_k-y \subseteq W_m^n\}}\bigg)^{-1}.
\end{align*}\qed
\end{proof}

Thus, for the remainder of this paper we shall only consider local algorithms of the form \eqref{Vnconf}. We usually skip the $n$ from the notation and write $(B_l,W_l)$ for the $n\times \dotsm \times n$ configurations.

Clearly, the larger $n$ is, the better accuracy of the algorithm can be expected, as more information is taken into account.
For most algorithms used in practice \cite{digital,OM}, $n=2$. However, algorithms with $n=3$ have been suggested, see \cite{sandau}. Also, most theoretical studies of local algorithms only involve $n=2$, see Section \ref{known}. One exception is \cite{tajine}.

\begin{definition}\label{weightcond}
The weights are said to be
\begin{itemize}
\item \textbf{translation invariant} if $w_l(a,z)$ is independent of $ z \in \R^d$.
\item \textbf{rotation (reflection) invariant} if 
\begin{equation*}
w_{l_1}(a,z_1)=w_{l_2}(a,z_2)
\end{equation*}
 whenever there is a rotation (reflection) $R$  preserving $\La$ such that $R(B_{l_1}+ z_1)=B_{l_2}+ z_2$.
\item \textbf{motion invariant} if the weights are both translation and rotation invariant.
\item \textbf{homogeneous (of degree $q$)} if 
\begin{equation*}
w_l(a,z)=a^qw_l(1,z)
\end{equation*}
 for all $a>0$ and $ z \in \R^d$.
\end{itemize}
\end{definition}

The estimators for Minkowski tensors in e.g.\ \cite{turk,mecke} are examples of local digital estimators where the weights are not translation invariant. 
If $V$ is rotation (reflection) invariant, the following proposition justifies the choice of rotation (reflection) invariant weights, see also \cite{am2}:

\begin{proposition}\label{rotprop}
Assume $V$ is rotation (reflection) invariant. For every local algorithm $\hat{V}$, there is a local algorithm $\hat{W}$ with rotation (reflection) invariant weights such that for all compact $X \in \mathcal{S}$,
\begin{equation}
\begin{split}
\label{rotworst}
&\su_{R\in \mathcal{R}} |\hat{W}^{a\La_{Rc}}(RX)- V(RX)| \\
&\quad \leq \su_{R\in \mathcal{R}} |\hat{V}^{ a\La_{Rc}}(RX) - V(RX)|
\end{split}
\end{equation}
where $\mathcal{R}$ denotes the  group of rotations (reflections) preserving $\La$. 
\end{proposition}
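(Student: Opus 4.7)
The strategy is to symmetrize $\hat V$ over the finite group $\mathcal R$ of isometries preserving $\La$ (which is finite since $\mathcal R$ is a crystallographic subgroup of $O(d)$). Concretely, I would define
\begin{equation*}
\hat W^{a\La_c}(S) := \frac{1}{|\mathcal R|}\sum_{R\in \mathcal R} \hat V^{a\La_{R^{-1}c}}(R^{-1}S),
\end{equation*}
interpreting $R^{-1}c$ modulo $\La$ when one insists on $c\in C_\xi$. Since $R^{-1}$ preserves $\La$ and sends $a\La_c$ bijectively to $a\La_{R^{-1}c}$, the right-hand side is well defined whenever $\hat V$ is. The proof then splits into two steps: establish the inequality \eqref{rotworst}, and verify that $\hat W$ is of the local form \eqref{Vnconf} with rotation (reflection) invariant weights.

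For the inequality, fix $R\in\mathcal R$ and compute $\hat W^{a\La_{Rc}}(RX)$. The substitution $R''=R'^{-1}R$ runs bijectively over $\mathcal R$ as $R'$ does, so
\begin{equation*}
\hat W^{a\La_{Rc}}(RX) = \frac{1}{|\mathcal R|}\sum_{R''\in\mathcal R} \hat V^{a\La_{R''c}}(R''X).
\end{equation*}
Using rotation invariance $V(R''X)=V(X)=V(RX)$ and the triangle inequality,
\begin{equation*}
|\hat W^{a\La_{Rc}}(RX)-V(RX)| \leq \frac{1}{|\mathcal R|}\sum_{R''\in\mathcal R}|\hat V^{a\La_{R''c}}(R''X)-V(R''X)|,
\end{equation*}
and the right-hand side is bounded by $\sup_{R''\in\mathcal R}|\hat V^{a\La_{R''c}}(R''X)-V(R''X)|$, which is independent of $R$. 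Taking the supremum over $R$ on the left yields \eqref{rotworst}.

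For locality and invariance of the weights, the key bookkeeping is that every $R\in\mathcal R$ permutes the $n\times\dotsm\times n$ configurations up to a lattice translation: there are unique $y_R\in\La$ and a permutation $\sigma_R$ of $\{0,\dots,2^{n^d}-1\}$ with $RB_l-y_R=B_{\sigma_R(l)}$ and $RW_l-y_R=W_{\sigma_R(l)}$ for all $l$. Unfolding $\hat V^{a\La_{R^{-1}c}}(R^{-1}S)$ via \eqref{Vnconf}, rewriting the conditions $a(B_l+z+R^{-1}c)\subseteq R^{-1}S$ as $a(RB_l+Rz+c)\subseteq S$, and reindexing $z\mapsto Rz-y_R$ and $l\mapsto\sigma_R^{-1}(m)$, puts each summand in the form \eqref{Vnconf}. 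Averaging over $\mathcal R$ then gives $\hat W$ the representation \eqref{Vnconf} with weights
\begin{equation*}
\tilde w_m(a,u) = \frac{1}{|\mathcal R|}\sum_{R\in\mathcal R} w_{\sigma_R^{-1}(m)}\!\bigl(a,\, R^{-1}(u-ay_R)\bigr),
\end{equation*}
which are manifestly constant on the $\mathcal R$-orbits of the translated configurations $B_m+u/a$, i.e.\ rotation (reflection) invariant in the sense of Definition \ref{weightcond}. The main obstacle is strictly clerical: keeping track of the translations $y_R$ (needed because $R$ generally does not fix $C_{0,0}^n$ setwise) and the induced permutations $\sigma_R$ so that the reindexing is a genuine bijection; once this is done, both the local form and the invariance of $\tilde w_m$ are automatic consequences of averaging over the group.
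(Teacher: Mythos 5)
Your proof is correct and follows essentially the same route as the paper: symmetrize $\hat{V}$ by averaging over the finite group $\mathcal{R}$, note that the averaged estimator's deviation is bounded by the worst deviation of $\hat{V}$ over the orbit, and check that averaging preserves the local form with rotation (reflection) invariant weights. The paper states these last verifications without detail, whereas you spell out the triangle-inequality step and the configuration bookkeeping (the translations $y_R$ and permutations $\sigma_R$), which is a more explicit but not substantively different argument.
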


\begin{proof}
If $|\mathcal{R}|$ is the cardinality of $\mathcal{R}$, define for $S\subseteq \La$
\begin{equation*}
\hat{W}^{ a\La_c}(a(S+c))=\frac{1}{|\mathcal{R}|}\sum_{R\in \mathcal{R}} \hat{V}^{ a\La_{Rc}}(aR(S+c)).
\end{equation*}
This is a local estimator with rotation invariant weights and it clearly satisfies \eqref{rotworst} since $V(RX)=V(X)$. \qed
\end{proof}

Finally, we introduce a bit more notation:
For two subsets $A,B\subseteq \R^d$, let 
\begin{align*}
&\check{B}=\{-b \mid b \in B\},\\
&A\ominus B=\{x\in\R^d \mid x + \check{B}\subseteq A\}.
\end{align*}
The hit-or-miss transform of $X$ with structure elements $B$ and $W$ is defined to be the set
\begin{equation*}
X\ominus \check{B} \backslash X\oplus \check{W}= \{y\in \R^d \mid y+B\subseteq X, y + W \subseteq \R^d \backslash X\}.
\end{equation*}
A local estimator then takes the form
\begin{align*}
\hat{V}^{a\La_c}(X)={}&\sum_{l=0}^{2^{n^d}-1} \sum_{z\in \La}  w_l(a,a(z+c))\\
& \times \mathds{1}_{  X\ominus a\check{B}_l \backslash X\oplus a\check{W}_l}(a(z+c)).
\end{align*}
If $z\mapsto w_0(a,z)$ is integrable and $z\mapsto w_l(a,z)$ are locally integrable for $l>0$, $\hat{V}^{a\La_c}(X)$ is always integrable for $X$ compact since
\begin{equation}
\begin{split}\label{intHM}
E {}&\bigg( \sum_{z\in \La}  w_l(a,a(z+c))\mathds{1}_{  X\ominus a\check{B}_l \backslash X\oplus a\check{W}_l}(a(z+c))\bigg) \\ 
&= a^{-d}\det(\La)^{-1} \int_{X\ominus a\check{B}_l \backslash X\oplus a\check{W}_l} w_l(a,z)dz
\end{split}
\end{equation}
and hence
\begin{align*}
E{}&\hat{V}(X\cap a\La_c)\\
& =a^{-d}\det(\La)^{-1}\sum_{l=0}^{2^{n^d}-1}  \int_{X\ominus a\check{B}_l \backslash X\oplus a\check{W}_l} w_l(a,z)dz.
\end{align*}

\subsection{Local digital estimators for intrinsic volumes}\label{localal}
We finally specialize the definition of local digital estimators to intrinsic volumes. The definition used in \cite{kampf2,am2} is a special case of this.

Suppose $X\subseteq \R^d$ is a compact convex set. The intrinsic volumes $V_q(X)$ are defined for  $q=0,\dots, d$ to be the coefficients in the well-known Steiner formula 
\begin{equation*}
\Ha^d(X\oplus B(r)) = \sum_{q=0}^d r^{d-q}\kappa_{d-q} V_q(X)
\end{equation*}
for the volume of the Minkowski sum $X\oplus B(r)$ of $X$ and the ball $B(r)\subseteq \R^d $ of radius $r$. Here $\kappa_q$ is the volume of the unit ball in $\R^q$. The intrinsic volumes can be generalized to the class of sets of positive reach, see \cite{federer}.

Each $V_q$ is the total measure of the $q$'th curvature measure $\Phi_q(X;\cdot)$ on $\R^d$, see \cite{schneider}. Thus
\begin{equation*}
V_q(X)= n^{-d}\sum_{z\in \La} \Phi_q(X;aC_{z}^n).
\end{equation*}
This justifies the use of a local algorithm $\hat{V}_q$ for estimating ${V}_q(X)$, i.e. an algorithm of the form 
\begin{align*}
\hat{V}_q^{a\La_c} (X) ={}& \sum_{z \in \La } \sum_{l=0}^{2^{n^d}-1}  w_l^{(q)}(a,a(z+c)) \\
& \times \mathds{1}_{  X\ominus a\check{B}_l \backslash X\oplus a\check{W}_l}(a(z+c))
\end{align*}
where $w_l^{(q)}(a,a(z+c))$ can be thought of as an estimate for $n^{-d^2}\Phi_q(X;a(C_{z}^n+c))$.  As $\Phi_q(X;\cdot)$ is rotation and reflection invariant, Proposition \ref{rotprop} justifies choosing the weights to be rotation and reflection invariant as well. Moreover, $\Phi_q(X;\cdot)$ is translation invariant so it is natural to require the weights to be so too, i.e.\ $w_l^{(q)}(a,z)=w_l^{(q)}(a)$.
In order to get finite estimators for compact sets, we always assume that $w_0^{(q)}(a)=0$. 

We thus arrive at the following definition of a local digital estimator for $V_q$:
\begin{definition}\label{funddef}
For  $0\leq q \leq d$,
a local digital estimator for $V_q$ is an estimator of the form
\begin{equation}\label{funddefeq}
\hat{V}_q^{ a\La_c}(X)= \sum_{l=1}^{2^{n^d}-1} w_l(a) N_l(X\cap a\La_c)
\end{equation} 
where 
\begin{equation*}
N_l(X\cap a\La_c)= \sum_{z \in \La }  \mathds{1}_{  X\ominus a\check{B}_l \backslash X\oplus a\check{W}_l}(a(z+c))
\end{equation*} 
is the total number of occurrences of the configuration $(B_l,W_l)$ in the image $X\cap a\La_c$. The weights are assumed to be motion and reflection invariant. 
\end{definition}
Throughout this paper, a local digital estimator for $V_q$ will mean an estimator of the form \eqref{funddefeq}.
We often skip the superscripts $a\La_c$ and $(q)$ in the notation for the estimator and the weights and write $\hat{V}_q(X)$ and $w_l(a)$, respectively. 

In applications, the weights are usually chosen to be homogeneous of degree $q$: $w_l^{(q)}(a)=a^qw_l^{(q)}$ for some constants $w_l^{(q)}\in \R$, motivated by the homogeneity pro\-per\-ty: 
\begin{equation*}
\Phi_q(aX;aA)=a^q\Phi_q(X;A).
\end{equation*}
 However, in \cite{kampf2}, also the case of general functions is considered. 
In this paper, we will not assume homogeneity unless explicitly specified.

If an algorithm is not asymptotically unbiased, the worst case relative asymptotic bias measures the bias: 
\begin{definition}
The worst case relative asymptotic bi\-as of an estimator $\hat{V}_q$ for $V_q$ on a class $\mathcal{S}$ of compact convex sets with non-empty interior is given by
\begin{equation*}
\sup_{X\in \mathcal S} \frac{\vert\lim_{a\to 0}E\hat{V}_q^{a\La_c}(X)-V_q(X)\vert}{V_q(X)}.
\end{equation*}
\end{definition}
As long as we restrict ourselves to $\mathcal{S}$, this agrees with the definition in~\cite{kampf2}.
By Proposition \ref{rotprop}, the worst case relative asymptotic bias is minimized by an algorithm with rotation and reflection invariant weights. 

%

\section{Local estimators for the intrinsic volumes of polytopes}\label{polytopes}
We first consider local digital estimators for intrinsic volumes on the class $\mathcal{P}^d$ of compact convex polytopes in $\R^d$ with non-empty interior. 

We will use the following notation: for a set $A\subseteq \R^d$, we denote by $\aff(A)\subseteq \R^d$ the smallest affine linear subspace containing $A$ and by $\lin(A)\subseteq \R^d$ the linear subspace parallel to $\aff(A)$. For a set of vectors $u_1,\dots , u_N$, we denote by $\pos(u_1,\dots , u_N)$ the set of linear combinations of $u_1,\dots , u_N$ with non-negative coefficients.

\subsection{The space of polytopes}\label{polspace}
The set $\mathcal{P}^d$ is usually given the topology induced by the Hausdorff metric, see \cite[Section 1.8]{schneider}. As our main Theorem \ref{main1} is stated for almost all polytopes, we need an appropriate measure on the induced Borel $\sigma$-algebra in order to make sense of the statement. However, the choice of such a measure is not unambiguous. The most natural way of describing a polytope is either as the convex hull of its vertex set or as an intersection of halfspaces. The parameters describing the vertices and halfspaces, respectively, can be used to parametrize $\mathcal{P}^d$, but this leads to two very different measures. 
In the first case, almost all polytopes will be simplicial while non-simple polytopes constitute a set of positive measure. In the second case, it is the other way around.
A polytope is called simple if every vertex is the intersection of exactly $d$ facets and it is called simplicial if every facet is a simplex, see e.g.\ \cite{ziegler}.

As we shall be viewing polytopes as intersections of halfspaces, we take the second approach. 
 There may still be different ways of defining a measure, and the best choice depends on the application one has in mind. The one we choose could be relevant in situations where the particles under study arise from random sections of some material. However, the main purpose here is to convince the reader that counter examples to multigrid convergence are plenty on $\mathcal{P}^d$. As Theorem \ref{main1} only claims something to be a zero-set, the theorem will also hold for any measure absolutely continuous with respect the one introduced below. 

A convex polytope can always be written in the form 
\begin{equation}\label{intersect}
P=\bigcap_{i=1}^N H_{u_i,t_i}^-
\end{equation}
where $t_i\in \R$ and $u_i\in S^{d-1}$. The idea is to use the parameters $t_i,u_i$ to parametrize polytopes by.
We denote by $S^{d,N} \subseteq (S^{d-1})^N$ the open subset consisting of $N$-tuples of pairwise different vectors in $S^{d-1}$. A point will be written either as a vector $(u_1,\dots,u_N)$ or as an $N\times d$-matrix $U$. Then \eqref{intersect} is the solution set to the matrix inequality $Ux \leq t$. 

First note that \eqref{intersect} is unbounded if and only if the inequality $Ux\leq 0$ has a non-trivial solution $x$ and \eqref{intersect} is non-empty. The set where $Ux\leq 0$ has a non-trivial solution is closed in $S^{d,N}$. Let $S_c^{d,N} \subseteq (S^{d-1})^N$ denote the complement. Then $S^{d,N}\cap S_c^{d,N}$ is open in $(S^{d-1})^N$.

Next observe that \eqref{intersect} has non-empty interior exactly if there exists a solution $x$ to $Ux<t$. This happens for $(U,t)$ in an open subset 
\begin{equation*}
\mathcal{U}^{d,\leq N} \subseteq (S^{d,N}\cap S^{d,N}_c) \times \R^N. 
\end{equation*}

A point $(U,t) \in \mathcal{U}^{d,\leq N}$ defines a polytope with exactly $N$ facets if and only if for every $i = 1, \dots,N$ there is a solution to $\tilde{U}^ix < \tilde{t}^{i}$ where $\tilde{U}^i$ and $\tilde{t}^i$ are $U$ and $t$ except the $i$th row and the $i$th coordinate have changed sign, respectively. This is again an open subset $\mathcal{U}^{d,N}\subseteq \mathcal{U}^{d,\leq N}$.

Let $\mathcal{P}^{d,N} \subseteq \mathcal{P}^d$ be the subset consisting of polytopes with exactly $N $ facets. Then $\mathcal{P}^d$ is the disjoint union of the subsets $\mathcal{P}^{d,N}$.

There is a surjective map 
\begin{equation*}
P: \mathcal{U}^{d,N} \to \mathcal{P}^{d,N}
\end{equation*}
given by \eqref{intersect}.
This is continuous with respect to the Hausdorff metric on $\mathcal{P}^{d,N}$, as one can see e.g.\ by using \cite[Theorem 1.8.7]{schneider}. If $\Sigma_N$ is the $N$'th sym\-me\-tric group acting on $\mathcal{U}^{d,N}$ by permutation of the pairs $(u_i,t_i)$, then $P$ is the quotient map.

\begin{definition}
Denote by $\nu$ the measure on $\mathcal{P}^{d}$ whose restriction to $\mathcal{P}^{d,N}$ is  $\Ha^{dN}\circ P^{-1}$.
\end{definition}

We introduce the following notation for $Q\in \mathcal{P}^d$: $\mathcal{F}_k(Q)$ denotes the set of $k$-faces of $Q$. The facet with normal vector $u_i$ is denoted by $F_i$. If $Q$ is simple, every $F\in \mathcal{F}_k(Q)$ is the intersection of exactly $d-k$ facets. See e.g.\ \cite{ziegler} for details on the combinatorics of simple polytopes. We index the facets containing $F$ by
\begin{equation*}
I_1(F)=\{i_1^F,\dots,i_{d-k}^F\}\subseteq \{1,\dots , N\},
\end{equation*}
i.e.\ $F=\bigcap_{i\in I_1(F)}F_i$. The ordering is not important here. Let
\begin{equation*}
I_2(F)=\{i\in \{1,\dots, N\}\backslash I_1(F)\mid F_i\cap F \neq \emptyset \}
\end{equation*}
index the facets intersecting $F$ in a lower dimensional face. If $Q$ is simple, this lower dimensional face must have dimension $k-1$.

Let $\mathcal{US}^{d,N}$ denote the set 
\begin{equation*}
\mathcal{US}^{d,N}= \{(U,t)\in \mathcal{U}^{d,N} \mid P(U,t) \text{ is simple}\}
\end{equation*}
and let $\mathcal{US}^{d,N}_\mu$, $\mu \in M$, denote the connected components of $\mathcal{US}^{d,N}$.

\begin{proposition}
\InsertTheoremBreak
\begin{itemize}
\item[(i)] For $I \subseteq \{1,\dots ,N\}$, the set
\begin{align*}
G_{I}=\{ {}& (U,t) \in \mathcal{U}^{d,N} \mid  \\ & \exists x \in \R^d:  \forall i\in I : \langle x ,u_{i}\rangle = t_{i},\, Ux\leq t \}
\end{align*}
is relatively closed in $\mathcal{U}^{d,N}$.
\item[(ii)]
$\mathcal{U}^{d,N}\backslash \mathcal{US}^{d,N}$ is relatively closed in $\mathcal{U}^{d,N}$ and has $\Ha^{dN}$-measure 0.
\item[(iii)] For any $I \subseteq \{1,\dots ,N\}$ of cardinality $|I|=d-k$, $P(U,t)$ has a $k$-face $F$ with $I_1(F)=I$ for either no or all $(U,t)\in \mathcal{US}^{d,N}_\mu$.
\end{itemize} 
\end{proposition}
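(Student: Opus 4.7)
The plan is to treat the three parts in order, using (i) as the technical foundation for both (ii) and (iii).

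For (i), I would take a convergent sequence $(U^{(n)}, t^{(n)}) \to (U,t)$ in $\mathcal{U}^{d,N}$ with witnesses $x_n$ satisfying $\langle x_n, u_i^{(n)} \rangle = t_i^{(n)}$ for $i \in I$ and $U^{(n)} x_n \leq t^{(n)}$. Since $P$ is continuous in the Hausdorff metric and $P(U,t)$ is compact, the polytopes $P(U^{(n)}, t^{(n)})$ eventually lie in a common bounded ball, so $\{x_n\}$ is bounded. Extract a convergent subsequence $x_n \to x$; both closed conditions $\langle\,\cdot\,, u_i\rangle = t_i$ and $U(\cdot) \leq t$ pass to the limit, producing the required witness for $(U,t) \in G_I$.

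For (ii), non-simplicity at $(U,t)$ means some vertex lies on at least $d+1$ facets, so $\mathcal{U}^{d,N} \setminus \mathcal{US}^{d,N} = \bigcup_{|I|=d+1} (G_I \cap \mathcal{U}^{d,N})$, a finite union; relative closedness then follows from (i). For the measure claim, for each such $I$ the overdetermined system $U_I x = t_I$ is consistent exactly when $\det[U_I \mid t_I] = 0$ (together with the lower stratum where $U_I$ itself drops rank below $d$, which is already of codimension $\geq 1$). This is a nontrivial polynomial equation on the $dN$-dimensional parameter space, cutting out a real-algebraic subset of codimension at least one and hence a $\Ha^{dN}$-null set; a finite union of such sets is still null.

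Part (iii) is the substantive step. Let $A_I \subseteq \mathcal{US}^{d,N}$ be the set of $(U,t)$ such that $P(U,t)$ has a $k$-face $F$ with $I_1(F) = I$. The plan is to show $A_I$ is both relatively closed and relatively open in $\mathcal{US}^{d,N}$, so that $A_I$ is a union of connected components. For closedness, in the simple case any $x \in P(U,t)$ with $U_I x = t_I$ forces $\bigcap_{i\in I} F_i$ to be a face of dimension exactly $k$ — a smaller dimension would require an extra facet through this intersection, contradicting the fact that $k$-faces of a simple polytope are intersections of exactly $d-k$ facets. Thus $A_I = G_I \cap \mathcal{US}^{d,N}$, closed by (i). For openness, fix $(U_0, t_0) \in A_I$ with $k$-face $F_0$, and pick $x_0$ in the relative interior of $F_0$, so that $\langle x_0, u_i^{(0)} \rangle < t_i^{(0)}$ strictly for every $i \notin I$ (for $i \in I_2(F_0)$ strictness uses that the relative interior avoids the $(k-1)$-subfaces). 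Simplicity forces $U_I^{(0)}$ to have rank $d-k$, so the affine $k$-plane $\{x : U_I x = t_I\}$ deforms continuously with $(U,t)$; producing a nearby point $x(U,t)$ on this plane by orthogonal projection of $x_0$, the strict inequalities $\langle x(U,t), u_i \rangle < t_i$ for $i \notin I$ persist under small perturbations, so $x(U,t) \in P(U,t) \cap \bigcap_{i\in I} F_i(U,t)$, and the resulting intersection is the desired $k$-face.

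The main obstacle is the openness direction of (iii): one must rule out that $F_0$ degenerates or merges with a neighboring face under perturbation. Simplicity enters twice here, once to guarantee that $U_I$ has full row rank $d-k$ so that the candidate $k$-plane varies smoothly, and once to ensure that every facet $F_i$ with $i \notin I$ either misses $F_0$ or meets it in a proper subface, so that strict inequalities on $\mathrm{relint}\, F_0$ survive perturbation. Without these inputs the normal fan could split discontinuously at $(U_0, t_0)$, which is precisely why stratifying by connected components of $\mathcal{US}^{d,N}$ is the right move.
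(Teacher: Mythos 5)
Your proposal is correct, and its skeleton is the same as the paper's: closedness of $G_I$ by extracting a limit from a sequence of witnesses, the complement of $\mathcal{US}^{d,N}$ in $\mathcal{U}^{d,N}$ written via the sets $G_I$ with $|I|=d+1$ and killed in measure by the consistency condition for an overdetermined linear system, and a relatively-open-and-closed argument on the connected components for (iii). The differences are in execution, and they are worth noting. In (i) you get boundedness of the witnesses $x_n$ from the Hausdorff-continuity of $P$ (asserted with a reference just before the proposition), whereas the paper argues self-containedly: if $|x^k|\to\infty$, normalize and pass to a subsequential limit to produce a nontrivial solution of $Ux\le 0$, contradicting $U\in S_c^{d,N}$; the paper's version is essentially the proof of the continuity you invoke, so yours is shorter but leans on that assertion. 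In (ii) your determinant-of-the-augmented-matrix formulation (plus the lower-rank stratum) is just an algebraic rephrasing of the paper's observation that for fixed $U$ the admissible $t_I$ lie in a proper linear subspace of $\R^{d+1}$, followed by Fubini; both are fine. In (iii) you prove clopenness of your set $A_I$ directly for every $k$, using a relative-interior point of the face, full row rank of $U_I$ at a simple polytope, and orthogonal projection onto the perturbed flat $\{x : U_Ix=t_I\}$, together with the identification $A_I=G_I\cap\mathcal{US}^{d,N}$ which lets (i) handle closedness for all $k$ at once; the paper instead perturbs only in the vertex case $k=0$ (a square invertible system) and obtains general $k$ combinatorially, since $\bigcap_{i\in I}F_i$ is nonempty exactly when some vertex $v$ has $I\subseteq I_1(v)$. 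Your route trades that combinatorial reduction for a little more linear algebra; both rest on the same facts about simple polytopes, and the one step you state rather loosely -- that a nonempty intersection of $d-k$ facets of a simple polytope has dimension exactly $k$, so that $A_I=G_I\cap\mathcal{US}^{d,N}$ and the perturbed intersection really has $I_1=I$ -- should be justified by the vertex-figure (simplex) property at a vertex of that intersection, which is the standard fact the paper also uses with a reference to Ziegler.
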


\begin{proof}
(i) To see this, take a sequence $(U^k,t^k)\in G_{i_1,\dots,i_{s}}$ such that $(U^k,t^k) \to (U,t)$ inside $\mathcal{U}^{d,N}$. Then there is a sequence $x^k$ with $\langle u_{i_j}^k,x^k \rangle = t_{i_j}^k$ and $ U^k x^k\leq t^k$. If the $x^k$ are bounded, there is a convergent subsequence $x^{k_n} \to x$ and it follows by continuity that $\langle u_{i_j},x\rangle = t_{i_j}$ and $ Ux\leq t$. If $x^k$ is unbounded, choose a subsequence such that $|x^{k_n}|\to \infty$ and $\frac{x^{k_n}}{|x^{k_n}|}$ converges to $x\in S^{d-1}$. Then $U^{k_n}\frac{x^{k_n}}{|x^{k_n}|}\leq \frac{t^{k_n}}{|x^{k_n}|}$ and thus in the limit $Ux\leq 0 $, contradicting $U\in S_c^{d,N}$.

(ii) If $P(U,t)$ is not simple, it has a vertex $v$ solving $d+1$ of the equations $\langle u_{i_j},v\rangle = t_{i_j}$, $j=1,\dots , d+1$.
The claim now follows from (i) and the fact that 
\begin{align*}
G_{\{i_1,\dots,i_{d+1}\}}\subseteq \{ {}&(U,t) \in \mathcal{U}^{d,N} \mid  \exists x \in \R^d: \\ & \forall j=1,\dots,d+1 : \langle u_{i_j},x\rangle = t_{i_j}\},
\end{align*}
since the latter has $\Ha^{dN}$-measure 0.

(iii)
First assume $k=0$. By the definition of simple polytopes, the set of $(U,t)\in \mathcal{US}^{d,N}_\mu$ having a vertex $v$ with $I_1(v)=I$ is $G_{I} \cap \mathcal{US}^{d,N}_\mu $. This is closed by (i). On the other hand, 
\begin{equation}\label{sys}
\langle u_{i},v \rangle=t_{i} \text{ for } i \in I \text{ and }\langle u_{i},v \rangle < t_{i} \text{ for } i\notin I.
\end{equation} 
Uniqueness of $v$ shows that the system of linear equations $\langle u_{i},v \rangle=t_{i} $ for $ i \in I$ has a unique solution in a neighborhood of $(U,t)$, yielding a solution to \eqref{sys} and thus showing that $G_I\cap \mathcal{US}^{d,N}_\mu $ is also open. Hence $G_I\cap \mathcal{US}^{d,N}_\mu \in \{ \mathcal{US}^{d,N}_\mu ,\emptyset \}$, proving the $k=0$ case.

Given $I$ with $|I|=d-k$, 
\begin{equation*}
F= \bigcap_{i\in I} F_i \in \mathcal{F}_k(P(U,t))\cup \{\emptyset \}
\end{equation*}
whenever $(U,t)\in \mathcal{US}^{d,N}_\mu$. If there is a $(U,t)\in \mathcal{US}^{d,N}_\mu$ and a $v\in \mathcal{F}_0(P(U,t))$ with $I \subseteq I_1(v)$, the $k=0$ case shows that $\bigcap_{i\in I_1(v)}F_i \in \mathcal{F}_0(P(U,t))$ must hold for all $(U,t)\in \mathcal{US}^{d,N}_\mu$ and hence, in particular, $ \bigcap_{i\in I} F_i \neq \emptyset$ for all $(U,t) \in \mathcal{US}^{d,N}_\mu$. If there is no $v\in \mathcal{F}_0(P(U,t))$ with $I\subseteq I_1(v)$, $F$ can have no vertices and is hence empty.\qed
\end{proof}
%

%


The proposition shows that all $P\in P(\mathcal{US}^{d,N}_\mu)$ have the same combinatorial structure.
A path $(U(s),t(s))$ in $ \mathcal{US}^{d,N}_\mu$ defines a path of vertex sets $\mathcal{F}_0(P(U(s),t(s)))$ by the $k=0$ case in the proof of (iii) and continuity of matrix inversion. This can be extended to an isotopy of $P(U(s),t(s))$ by piecewise linearity using a triangulation with vertices in $\mathcal{F}_0(P(U(s),t(s)))$. This restricts to an isotopy of the combinatorially equivalent lower dimensional faces. We therefore speak of the images $P(\mathcal{US}^{d,N}_\mu)=\mathcal{P}^{d,N}_\mu \subseteq \mathcal{P}_d$ as the combinatorial isotopy classes. 

\subsection{Hit-or-miss transforms of polytopes}\label{polHM}
In order to study the asymptotic bias of a local digital estimator $\hat{V}_q$ applied to $P\in \mathcal{P}^d$, we must consider
\begin{equation*}
E\hat{V}_q({P})=\sum_{l=1}^{2^{n^d}-1} w_l(a) EN_l({P}\cap a\La_c).
\end{equation*}
By \eqref{intHM}, 
\begin{equation}\label{E=HM}
EN_l({P} \cap a\La_c) = a^{-d} \det(\La)^{-1} \Ha^{d}( P\ominus a\check{B}_l\backslash P\oplus a\check{W}_l).
\end{equation}
Thus, we need to describe the volume of hit-or-miss transforms of polytopes.

Suppose $P\in \mathcal{P}^{d,N}$ is given by
\begin{equation*}
P(U,t) = \bigcap_{i=1}^N H^-_{u_i,t_i}.
\end{equation*}
Let $X_{i,l}$ denote the set
\begin{align*}
X_{i,l}&= (H^-_{u_i,t_i}\ominus a\check{B}_l) \backslash (H^-_{u_i,t_i} \oplus a\check{W}_l)\\ &=H^-_{u_i,t_i-ah(B_l,u_i)}\backslash H^-_{u_i,t_i+ah(\check{W}_l,u_i)}
\end{align*}
for $l = 1,\dots , 2^{n^d}-2$ and 
\begin{align*}
&X_{i,0}= \R^d \backslash H^-_{u_i,t_i+ah(\check{C}_{0,0}^n,u_i)},\\
&X_{i,2^{n^d}-1}= H^-_{u_i,t_i-ah(C_{0,0}^n,u_i)}.
\end{align*}

Then $\R^d$ is the disjoint union of the sets $X_{i,l}$ for $l = 0,\dots , 2^{n^d}-1$. Hence it is also the disjoint union of the sets
\begin{equation*}
X_{l_1,\dots ,l_N} = \bigcap_{i=1}^N X_{i,l_i}
\end{equation*}
for $l_1,\dots,l_N\in \{0,\dots,2^{n^d}-1\}$. 

We also use the multi index notation $X_L= X_{l_1,\dots,l_N}$ for  $L \in \mathcal{L}:=\{1,\dots,2^{n^d}-1\}^N$. We associate to an index $L\in \mathcal{L}$ the index sets $I^L=\{i\mid l_i \neq 2^{n^d}-1\}$ and $J^L= \{i\mid l_i = 2^{n^d}-1\}$. Moreover, we associate the face of $P$ given by $F_L=\bigcap_{i\in I^L} F_i$. If $P $ is simple, this is either $|I^L|$-dimensional or the empty face.

\begin{lemma}\label{polvol}
Let $\mathcal{W}^s\subseteq (S^{d-1})^s$ be the open subset consisting of linearly independent $s$-tuples of unit vectors.
There are functions $a_j:\mathcal{W}^s \to \R$ for all $j,s =1,\dots ,d$, such that:
\begin{itemize}
\item[(i)] $a_{s}(u_1, \dots , u_{s})>0$ and $a_j(u_1,\dots , u_s)=0$ for $s<j$.
\item[(ii)] Each $a_j$ is rotation invariant and depends analytically on $u_1,\dots , u_s$.
\item[(iii)] If $u_s$ is orthogonal to all $u_i$ with $i<s$,
\begin{equation*}
a_{j}(u_1,\dots,u_s) =\begin{cases} 1 & \text{for }$j=s$,\\
0 & \text{otherwise}.
\end{cases}
\end{equation*}
\item[(iv)]
If $S\subseteq \{1,\dots,s\}$ and $\lin(u_i,i\in S)$ is orthogonal to $\lin(u_i,i\notin S)$, then
$a_{j}(u_1,\dots,u_s) = 0 $
if $s\in S$ and $j\notin S$.

\item[(v)] For for $(U,t)\in \mathcal{US}_\mu^{d,N}$ and $F \in \mathcal{F}_q(P(U,t))$, 
\begin{align}\nonumber
\Ha^q(F)={}&\frac{1}{q!}\sum_{v\in \mathcal{F}_0(F)} \sum_{\sigma \in \Sigma_q} \sum_{j_{d-q+1},\dots , j_d=1}^d
 \prod_{s=d-q+1}^d \\\nonumber
  & a_{j_s}(u_{i_1^F},\dots,u_{i_{d-q}^F}, u_{i_{\sigma(d-q+1)}^v},\dots ,u_{i_{\sigma(s)}^v})\\\label{Haface} 
&\times t_{{i_1^F},\dots,{i_{d-q}^F},  {i_{\sigma(d-q+1)}^v},\dots ,{i_{\sigma(s)}^v}}(j_s)
\end{align}
where $t_{{i_1},\dots , {i_s}}(j)=t_{i_j}$ and indices are chosen so that $I_1(v)= I_1(F)\cup \{ i_{d-q+1}^v,\dots ,i_{d}^v\}$. 
\end{itemize}
 

In particular, the volume of $P(U,t)$ is given by a polynomial in $t_1,\dots, t_N$ with coefficients depending only on $U$:
\begin{equation}\label{volofP}
\Ha^d(P(U,t)) = \frac{1}{d!}\sum_{k_1,\dots , k_d= 1}^N a_{k_1,\dots,k_d}(U) \prod_{s=1}^d t_{k_s}
\end{equation}
where
\begin{align}
\label{ajformel}
a_{k_1,\dots,k_d}(U) = &\sum_{v\in \mathcal{F}_0(\bigcap_{m=1}^d F_{k_m})} \sum_{\sigma \in \Sigma_d } \sum_{j_{1},\dots , j_d=1}^d
\\
&\prod_{s=1}^d a_{j_s}( u_{i_{\sigma(1)}^v},\dots ,u_{i_{\sigma(s)}^v}) \mathds{1}_{\{i_{\sigma(j_s)}^v=k_s\}}.
\nonumber
\end{align}
%
\end{lemma}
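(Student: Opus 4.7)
My plan is to define the functions $a_j$ explicitly via Gram--Schmidt and then prove (v) by induction on the face dimension $q$. Given $(u_1,\dots,u_s)\in \mathcal{W}^s$, apply Gram--Schmidt to obtain an orthonormal basis $f_1,\dots,f_s$ of $\spa(u_1,\dots,u_s)$ with $f_k\in\spa(u_1,\dots,u_k)$ and $\langle f_s,u_s\rangle>0$, and let $u_1^*,\dots,u_s^*$ denote the basis of $\spa(u_1,\dots,u_s)$ dual to $u_1,\dots,u_s$. Define
\begin{equation*}
a_j(u_1,\dots,u_s)=\langle f_s,u_j^*\rangle\quad\text{for } j\le s,\qquad a_j\equiv 0\text{ for }j>s.
\end{equation*}
Property (i) is immediate from the orientation choice $\langle f_s,u_s\rangle>0$ together with the triangular shape of Gram--Schmidt; (ii) follows from the analyticity and rotation-equivariance of orthogonalization and dual-basis inversion; and (iii)--(iv) reduce to the observation that whenever $u_s$ is orthogonal to $\spa(u_1,\dots,u_{s-1})$ (respectively, to a complementary block in a block-orthogonal splitting of the indices containing $s$) Gram--Schmidt returns $f_s=u_s$, forcing $a_s=1$ and $a_j=0$ for $j\ne s$ (respectively, $a_j=0$ for $j$ outside the block of $s$).

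\textbf{Induction on $q$.} The base case $q=0$ is trivial: a vertex $v$ has $\Ha^0(\{v\})=1$ and the formula reduces to the empty product. For the inductive step, use the classical polytope divergence identity on $F$,
\begin{equation*}
\Ha^q(F)=\frac{1}{q}\sum_{G\in\mathcal{F}_{q-1}(F)}\langle n_{F,G},x_G\rangle\,\Ha^{q-1}(G),
\end{equation*}
where $n_{F,G}$ is the unit outward normal to $G$ inside $\aff(F)$ and $x_G\in\aff(G)$ is arbitrary. By simplicity of $P$, each such $G$ equals $G_i:=F\cap F_i$ for a unique $i\in I_2(F)$, and the orthogonal complement of $\aff(G_i)$ in $\R^d$ is $\spa(u_{i_1^F},\dots,u_{i_{d-q}^F},u_i)$. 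The outward-pointing convention then identifies $n_{F,G_i}$ with the Gram--Schmidt direction $f_{d-q+1}$ of $(u_{i_1^F},\dots,u_{i_{d-q}^F},u_i)$. Choosing $x_G$ to be any vertex $w\in\mathcal{F}_0(G_i)$ and using $\langle u_{i_k^F},w\rangle=t_{i_k^F}$ and $\langle u_i,w\rangle=t_i$ yields
\begin{equation*}
\langle n_{F,G_i},w\rangle=\sum_{j=1}^{d-q+1} a_j(u_{i_1^F},\dots,u_{i_{d-q}^F},u_i)\,t_{(i_1^F,\dots,i_{d-q}^F,i)(j)},
\end{equation*}
which is precisely the $s=d-q+1$ factor appearing in the product in (v). Inserting the induction hypothesis for $\Ha^{q-1}(G_i)$ (applied with $I_1(G_i)=I_1(F)\cup\{i\}$), swapping $\sum_{i\in I_2(F)}\sum_{v\in\mathcal{F}_0(G_i)}$ with $\sum_{v\in\mathcal{F}_0(F)}\sum_{m=d-q+1}^d$ under the identification $i=i_m^v$, and bijecting the resulting pair $(m,\tau)\in\{d-q+1,\dots,d\}\times\Sigma_{q-1}$ with $\sigma\in\Sigma_q$ via $\sigma(d-q+1)=m$ (a cardinality check $q\cdot(q-1)!=q!$), one obtains exactly (v) with the combined prefactor $\tfrac{1}{q}\cdot\tfrac{1}{(q-1)!}=\tfrac{1}{q!}$. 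Specializing to $F=P(U,t)$ (so $q=d$, $I_1(F)=\emptyset$) and rewriting $t_{i_{\sigma(j_s)}^v}=\sum_{k_s=1}^N t_{k_s}\mathds{1}_{\{i_{\sigma(j_s)}^v=k_s\}}$ yields~\eqref{volofP} with coefficients~\eqref{ajformel}; the restriction to $v\in\mathcal{F}_0(\bigcap_m F_{k_m})$ appears because $\bigcap_{m=1}^d F_{k_m}$ is non-empty precisely when $\{k_1,\dots,k_d\}=I_1(v)$ for a vertex~$v$.

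\textbf{Main obstacle.} The delicate step is the orientation identification $n_{F,G_i}=f_{d-q+1}$: since $P\subseteq H^-_{u_i,t_i}$, moving a point of $F$ in direction $+u_i$ exits $P$ and hence exits $F$ across $G_i$, so the outward normal to $G_i$ inside $\aff(F)$ must have strictly positive component along $u_i$, which is exactly the orientation convention $\langle f_{d-q+1},u_i\rangle>0$ built into the definition of $a_j$. Once this sign is nailed down, the remainder is purely combinatorial bookkeeping of face indexings and the permutation reparameterization in the inductive step.
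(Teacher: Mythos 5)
Your proposal is correct and follows essentially the same route as the paper: you define the $a_j$ as the coefficients of the normalized Gram--Schmidt projection of $u_s$ expanded in the basis $u_1,\dots,u_s$ (your $\langle f_s,u_j^*\rangle$), verify (i)--(iv) from the Gram--Schmidt formula, and prove (v) by induction on $q$ via the identity $\Ha^q(F)=\tfrac{1}{q}\sum_{F'}h(F,u(F,F'))\Ha^{q-1}(F')$ with the outward normal of $F$ at $F'$ identified with that projection, exactly as the paper does. The only differences are cosmetic (writing the support number as $\langle n_{F,G},w\rangle$ at a vertex rather than as $h(F',\cdot)$, and somewhat compressed wording for (iv) and for the final restriction of the vertex sum to $\mathcal{F}_0(\bigcap_m F_{k_m})$), none of which affects correctness.
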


We sometimes write $a_j(u_{i_1},\dots,u_{i_s})=a_{i_1,\dots,i_s}(j)$ to keep notation short.

The existence of the formula \eqref{volofP} is basically \cite[Lemma 5.1.2]{schneider}. The remaining claims essentially follow by writing out the details of the proof of that lemma. 

\begin{proof}
For $j\leq q$,  the normalized projection of $u_s$ onto the subspace $\lin(u_1,\dots, u_{s-1})^\perp$ is given by unique linear combination
\begin{equation}\label{normproj}
\sum_{j=1}^s a_j(u_1,\dots , u_s)u_j
\end{equation} 
by the Gram-Schmidt formula. This defines the functions $a_j(u_1,\dots, u_s)$.
We set $a_j(u_1,\dots, u_s)=0$ for $j>s$.
The functions $a_j$ clearly satisfy (i)--(iv) by the Gram-Schmidt formula.

To prove (v), we use the identity
\begin{equation*}
\Ha^{d}(P(U',t'))= \frac{1}{d}\sum_{i=1}^N h(P(U',t'),u_i')\Ha^{d-1}(F_i'),
\end{equation*}
 see \cite[Lemma 5.1.2]{schneider}, which holds for any polytope $P(U',t')$. We apply this inductively to the $q$-faces of $P(U,t)$. The identity \eqref{Haface} clearly holds for $q=0$, the empty product being equal to 1.

Let $F \in \mathcal{F}_q(P)$ be given and let $F^{\prime}\in \mathcal{F}_{q-1}(P)$ be a face of $F$ with $I_1(F')=I_1(F)\cup \{i_{d-q+1}^{F^{\prime}}\}$. The normal vector $u(F,F^{\prime})$ of $F$ at $F^{\prime}$ is exactly the normalized projection of $u_{i_{d-q+1}^{F'}}$ onto $\lin(u_{i_1^{F}},\dots, u_{i_{d-q}^{F}})^\perp$ given by~\eqref{normproj}.

Since $ F' \subseteq \bigcap_{i\in I_1(F)} \partial H_{u_i,t_i}^-$, it follows that
\begin{align*}
h(F,u(F,{}& F^{\prime}))\\
={}& h\Big(F', \sum_{j=1}^{d-q+1} a_j(u_{i_1^{F}},\dots ,u_{i_{d-q}^{F}}, u_{i_{d-q+1}^{F^{\prime}}})u_{i_j^{F^\prime}}\Big)\\
={}&\sum_{j=1}^{d-q+1} a_j(u_{i_1^{F}},\dots ,u_{i_{d-q}^{F}}, u_{i_{d-q+1}^{F^{\prime}}})\\
&\times t_{{i_1^{F}},\dots ,{i_{d-q}^{F}}, {i_{d-q+1}^{F^{\prime}}}}(j).
\end{align*}
Thus by induction,
\begin{align*}
\Ha^{q}{}&(F)= \frac{1}{q}\sum_{F^{\prime}\in \mathcal{F}_{q-1}(F)} \sum_{j=1}^{d-q+1} a_{{i_1^{F}},\dots ,{i_{d-q}^{F}}, {i_{d-q+1}^{F^{\prime}}}}(j)\\
& \times t_{{i_1^{F}},\dots ,{i_{d-q}^{F}}, {i_{d-q+1}^{F^{\prime}}}}(j)
\\ 
&\times \frac{1}{(q-1)!}\sum_{v\in \mathcal{F}_0(F^{\prime})} \sum_{\sigma \in \Sigma_{q-1}} \sum_{j_{d-q+2},\dots, j_d=1}^d \\
&\times \prod_{s=d-q+2}^d  a_{{i_1^{F}},\dots ,{i_{d-q}^{F}}, {i_{d-q+1}^{F^{\prime}}},{i_{\sigma(d-q+2)}^v},\dots ,{i_{\sigma(s)}^v}}(j_s)\\
&\times t_{{i_1^{F}},\dots ,{i_{d-q}^{F}}, {i_{d-q+1}^{F^{\prime}}},{i_{\sigma(d-q+2)}^v},\dots ,{i_{\sigma(s)}^v}}(j_s)\\
={}& \frac{1}{q!} \sum_{F^{\prime}\in \mathcal{F}_{q-1}(F)} \sum_{v\in \mathcal{F}_0(F^{\prime})} \sum_{\sigma \in \Sigma_{q-1}} \sum_{j_{d-q+1},\dots, j_d=1}^d\\
 &a_{{i_1^{F}},\dots ,{i_{d-q}^{F}}, {i_{d-q+1}^{F^{\prime}}}}({j_{d-q+1}})
 t_{{i_1^{F}},\dots ,{i_{d-q}^{F}}, {i_{d-q+1}^{F^{\prime}}}}(j_{d-q+1})\\
&\times \prod_{s=d-q+2}^d a_{{i_1^{F}},\dots ,{i_{d-q}^{F}}, {i_{d-q+1}^{F^{\prime}}},{i_{\sigma(d-q+2)}^v},\dots ,{i_{\sigma(s)}^v}}({j_{s}})\\
& \times t_{{i_1^{F}},\dots ,{i_{d-q}^{F}}, {i_{d-q+1}^{F^{\prime}}},{i_{\sigma(d-q+2)}^v},\dots ,{i_{\sigma(s)}^v}}(j_s)\\
={}&\frac{1}{q!}\sum_{v\in \mathcal{F}_0(F)} \sum_{\sigma \in \Sigma_{q}} \sum_{j_{d-q+1},\dots j_d=1}^d\\ &\prod_{s=d-q+1}^d a_{{i_1^{F}},\dots ,{i_{d-q}^{F}}, {i_{\sigma(d-q+1)}^v},\dots ,{i_{\sigma(s)}^v}}(j_s) \\
&\times t_{{i_1^{F}},\dots ,{i_{d-q}^{F}}, {i_{\sigma(d-q+1)}^{v}},\dots ,{i_{\sigma(s)}^v}}(j_s).
\end{align*}

The last claim of the lemma follows by taking $q=d$ and observing that 
\begin{equation*}
t_{{i_{\sigma(1)}^{v}},\dots ,{i_{\sigma(s)}^v}}(j_s)=t_{k_s}
\end{equation*}
if and only if $i_{\sigma(j_s)}^v=k_s$.
\qed
\end{proof}

Given a multi index $L\in \mathcal{L}$, we use the notation for $i\in I^L$: 
\begin{align*}
&\beta_i=-h(B_{l_i},u_i),\\
&\omega_i= h(\check{W}_{l_i},u_i),\\
&\zeta_i = -h(C_{0,0}^n,u_i),\\
&\delta_L(U)=\prod_{i\in I^L} \mathds{1}_{\{\beta_i > \omega_i\}}.
\end{align*}
For a given index set $k_1,\dots,k_d$, let 
\begin{equation*}
n(i)=|\{s\in\{1,\dots , N\}:k_s=i\}|.
\end{equation*}
 
\begin{lemma} \label{volpiece}
Let $(U,t)\in \mathcal{US}^{d,N}_\mu$ and $L\in \mathcal{L}$ be given. Then for a sufficiently small, $\Ha^{d}(X_{L})$ is a homogeneous polynomial of degree $d$ in the numbers $(t_i+a\beta_i)$ and $(t_i+a\omega_i)$ for $i\in I_1(F_L)$ and $(t_i+a\zeta_i)$ for $i\in I_2(F_L)$ with coefficients depending only on $U$. In particular, it is a homogeneous polynomial of degree $d$ in $a,t_1,\dots,t_N$ given by
\begin{align}\nonumber
\Ha^{d}(X_{L})={}& \delta_L(U)\frac{1}{d!}\sum_{k_1,\dots , k_d\in I_1(F_L)\cup I_2(F_L)} a_{k_1,\dots,k_d}(U) \\\nonumber 
&\times \prod_{i \in I_1(F_L)}\sum_{s_i=1}^{n(i)}\binom{n(i)}{s_i} a^{s_i}t_{i}^{n(i)-s_i}(\beta_{i}^{s_i}-\omega_{i}^{s_i}) \\\label{paen}
&\times \prod_{j \in I_2(F_L)} (t_{j}+a\zeta_{j})^{n(j)}.
\end{align}
In particular, $\Ha^{d}(X_{L})=0$ if $F_L=\emptyset$.

As a polynomial in $a$, the lowest order term is
\begin{align*}
a^{|I_1(F_L)|}{}&\delta_L(U)\frac{1}{d!}\sum_{k_1,\dots , k_d\in I_1(F_L)\cup I_2(F_L)} a_{k_1,\dots,k_d}(U) \\
 \times {}& \prod_{i \in I_1(F_L)} {n(i)} t_{i}^{n(i)-1}(\beta_{i}-\omega_{i}) \prod_{j \in I_2(F_L)} t_{j}^{n(j)}.
\end{align*} 
\end{lemma}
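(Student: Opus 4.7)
The plan is to express $\mathds{1}_{X_L}$ as a signed sum of indicator functions of translated polytopes via inclusion--exclusion on the slab structure, apply Lemma~\ref{polvol} to each summand, and reorganize. For each $i\in I^L$, the slab $X_{i,l_i}$ equals $A_i\setminus B_i$ with $A_i=H^-_{u_i,t_i+a\beta_i}$ and $B_i=H^-_{u_i,t_i+a\omega_i}$, and is empty exactly when $\beta_i\leq\omega_i$, producing the overall factor $\delta_L(U)$. Assuming $\delta_L(U)=1$, so $B_i\subseteq A_i$, I would write
\[
\mathds{1}_{X_L}=\prod_{i\in I^L}(\mathds{1}_{A_i}-\mathds{1}_{B_i})\prod_{j\in J^L}\mathds{1}_{D_j}=\sum_{S\subseteq I^L}(-1)^{|S|}\mathds{1}_{Q_S},
\]
where $D_j=H^-_{u_j,t_j+a\zeta_j}$ and $Q_S$ is the polytope in which the $i$-th translation parameter $\tau_i(S)$ equals $t_i+a\omega_i$, $t_i+a\beta_i$, or $t_i+a\zeta_i$ according as $i\in S$, $i\in I^L\setminus S$, or $i\in J^L$.

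For $a$ sufficiently small, each $(U,\tau(S))$ remains in the open isotopy component $\mathcal{US}^{d,N}_\mu$ of $(U,t)$, so by Lemma~\ref{polvol} the volume of $Q_S$ is the same polynomial in its translation parameters with the same coefficients $a_{k_1,\dots,k_d}(U)$. Taking volumes and swapping summations yields
\[
\Ha^d(X_L)=\frac{\delta_L(U)}{d!}\sum_{k_1,\dots,k_d=1}^{N}a_{k_1,\dots,k_d}(U)\sum_{S\subseteq I^L}(-1)^{|S|}\prod_{s=1}^{d}\tau_{k_s}(S).
\]
For fixed $(k_1,\dots,k_d)$ the inner sum factorizes over $i\in I^L$, and any $i\in I^L$ with multiplicity $n(i)=0$ contributes a factor $1-1=0$; only tuples covering $I^L$ (in particular $|I^L|\leq d$) survive. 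For these, the inner sum equals
\[
\prod_{i\in I^L}\bigl[(t_i+a\beta_i)^{n(i)}-(t_i+a\omega_i)^{n(i)}\bigr]\prod_{j\in J^L}(t_j+a\zeta_j)^{n(j)}.
\]
Expanding each difference by the binomial theorem (the $s_i=0$ terms cancel) then produces the displayed formula~\eqref{paen}.

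Two final checks complete the argument. First, the outer range may be restricted to $k_s\in I_1(F_L)\cup I_2(F_L)$ because formula~\eqref{ajformel} expresses $a_{k_1,\dots,k_d}(U)$ as a sum over vertices of $\bigcap_m F_{k_m}$, and for surviving tuples this intersection lies in $F_L$; if any $F_{k_s}$ is disjoint from $F_L$ the vertex set is empty and the coefficient vanishes. The same reasoning yields $\Ha^d(X_L)=0$ when $F_L=\emptyset$: either $|I^L|>d$ and no tuple covers $I^L$, or $\bigcap_{i\in I^L}F_i=\emptyset$ forces every relevant coefficient to vanish. Second, the lowest-order term in $a$ is extracted by picking $s_i=1$ in each binomial difference, contributing $a\,n(i)\,t_i^{n(i)-1}(\beta_i-\omega_i)$, together with $t_j^{n(j)}$ from each $J^L$-factor; this produces total $a$-power exactly $|I^L|$.

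The only genuine obstacle is the combinatorial bookkeeping in the inner $S$-sum: one must carefully verify that the $n(i)=0$ factors annihilate non-covering tuples, that the surviving terms assemble into binomial differences with the correct signs, and that replacing $J^L$ by $I_2(F_L)$ is justified by the vanishing of $a_{k_1,\dots,k_d}(U)$. Everything else is a direct substitution into Lemma~\ref{polvol} followed by the binomial expansion.
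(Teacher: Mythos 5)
Your argument is correct and follows essentially the same route as the paper: inclusion--exclusion over the subsets of $I^L$ (your indicator-function identity is just the set-theoretic decomposition used there), an appeal to the openness of $\mathcal{US}^{d,N}_\mu$ so that the translated polytopes stay in the same class and formula \eqref{volofP} applies with the same coefficients $a_{k_1,\dots,k_d}(U)$, factorization into the binomial differences, and the restriction of the index range via \eqref{ajformel}. The bookkeeping points you flag (vanishing of non-covering tuples, the $\delta_L(U)$ factor, the $F_L=\emptyset$ case, and the extraction of the lowest-order term in $a$) are exactly the ones the paper's proof handles, and you handle them correctly.
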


\begin{proof}
We must compute the volume of
\begin{align*}
X_{L}  ={}& \bigcap_{i\in I^L} (H^-_{u_i,t_i+a\beta_i}\backslash H^-_{u_i,t_i+a\omega_i})\\
&\cap \bigcap_{j\in J^L} H^-_{u_j,t_j+a\zeta_j}. \\
\end{align*}
Clearly, if $\delta_{L}(U)=0$, this is empty.
For $I\subseteq I^L$ let 
\begin{align*}
X_I={}&\bigcap_{i\in I}  H^-_{u_i,t_i+a\omega_i} \cap \bigcap_{j\in I^L\backslash I} H^-_{u_j,t_j+a\beta_j} \\
& \cap \bigcap_{k\in J^L} H^-_{u_k,t_k+a\zeta_k} .
\end{align*}
Then $X_I\cap X_J = X_{I\cup J}$ since $\omega_i \leq \beta_i$ for all $i\in I^L$ and
\begin{equation*}
X_{L} = X_{\emptyset} \backslash \bigcup_{i\in I^L} X_{\{i\}}.
\end{equation*}
For $a$ sufficiently small, $X_I \in \mathcal{P}^{d,N}_\mu$ for all $I$ by openness of $ \mathcal{US}^{d,N}_\mu$. 
Let $Q(t)=\Ha^{d}(P(U,t))$ be the polynomial in~\eqref{volofP} and write
\begin{equation*}
\xi_i(I) = \omega_i 1_{i\in I} + \beta_i 1_{i \in I^L\backslash I}+\zeta_i 1_{i\in J^L}.
\end{equation*}
Then the inclusion-exclusion principle yields: 
\begin{align*}
\Ha^d{}&(X_{L}) = \sum_{I\subseteq I^L}(-1)^{|I|} \Ha^d( X_I)\\=&
\sum_{I\subseteq I^L}(-1)^{|I|}  Q(t_1 + a\xi_1(I),\dots , t_N + a\xi_N(I)) \\
={}& \frac{1}{d!}\sum_{k_1,\dots , k_d} a_{k_1,\dots,k_d} \sum_{I\subseteq I^L}(-1)^{|I|}  \prod_{i \in I}(t_{i}+a\omega_{i})^{n(i)} \\
&\times \prod_{ j \in I^L\backslash I} (t_{j}+a\beta_{j})^{n(j)} \prod_{k \in J^L} (t_{k}+a\zeta_{k})^{n(k)} \\
={}& \frac{1}{d!}\sum_{k_1,\dots , k_d} a_{k_1,\dots,k_d}\\
& \prod_{i\in I^L}((t_{i}+a\beta_{i})^{n(i)}-(t_{i}+a\omega_{i})^{n(i)} ) \\
&\times \prod_{j \in J^L} (t_{j}+a\zeta_{j})^{n(j)}\\
={}& \frac{1}{d!}\sum_{k_1,\dots , k_d\in I_1(F_L)\cup I_2(F_L)} a_{k_1,\dots,k_d}\\
&\times \prod_{i \in I_1(F_L)}
\sum_{s_i=1}^{n(i)}\binom{n(i)}{s_i}a^{s_i} t_{i}^{n(i)-s_i}(\beta_{i}^{s_i}-\omega_{i}^{s_i})\\
&\times \prod_{j \in I_2(F_L)} (t_{j}+a\zeta_{j})^{n(j)}.
\end{align*}
The last equality follows from the fact that $I^L=I_1(F_L)$ and that $I_1(F_L)\subsetneq \{ k_1,\dots, k_d\}$ implies $n(i)=0$ for some $i\in I_1(F_L)$ so that the product over $i\in I_1(F_L)$ is zero. Thus  only terms with $I_1(F_L)\subseteq \{ k_1,\dots, k_d\}$ contribute and the description of $a_{k_1,\dots,k_d}$ in \eqref{ajformel} therefore shows that $a_{k_1,\dots,k_d}=0$ unless 
\begin{equation*}
\{ k_1,\dots, k_d\}\subseteq I_1(F_L)\cup I_2(F_L).
\end{equation*}
\qed
\end{proof}

\subsection{Asymptotic behavior of the estimators}\label{asymptotics}
For $x\in X_{l_1,\dots ,l_N}$, 
\begin{equation*}
(x+aC_{0,0}^n) \cap P = x+ a\bigcap_{i=1}^N B_{l_i}.
\end{equation*}
We denote the configuration $\bigcap_{i=1}^N B_{l_i}$ by $B_{l_1,\dots , l_N}$ and the corresponding weight is denoted by $w_{l_1,\dots , l_N}(a)$ or $w(\bigcap_{i=1}^N B_{l_i},a)$. 
Note that if one of the $l_i$ equals 0, then $B_{l_1,\dots , l_N}=B_0=\emptyset$. For $L\in \mathcal{L}$, we also use the notation $B_L$ and $w_L(a)$.

As explained in the preceding section, $\R^d$ is the disjoint union of the sets $X_L$, $L\in \mathcal{L}$, and we have
\begin{equation*}
X_L\cap (P\ominus a\check{B}_l\backslash P\oplus a\check{W}_l)=\begin{cases}
X_L, & B_L=B_l\\
\emptyset, & B_L\neq B_l\\
\end{cases}
\end{equation*}
Hence \eqref{E=HM} yields:

\begin{corollary}\label{brik}
Let $P\in \mathcal{P}^{d,N}_\mu$ be a polytope. Then for $l\neq 0$,
\begin{equation*}
EN_l(P\cap a\La_c)=  a^{-d} \det(\La)^{-1} \sum_{L\in \mathcal{L}} \Ha^d(X_{L})\mathds{1}_{\{B_{L}=B_l\}}.
\end{equation*}
It follows that
\begin{equation*}
E\hat{V}_q(P) = a^{-d} \det(\La)^{-1} \sum_{L\in \mathcal{L}} w_{L}(a)\Ha^d(X_L).
\end{equation*}
where $\Ha^d(X_L)$ is given by Lemma \ref{volpiece}.
\end{corollary}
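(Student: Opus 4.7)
The plan is to reduce the corollary to the hit-or-miss volume computation in \eqref{intHM} combined with the disjoint decomposition of $\R^d$ by the sets $X_L$ that was set up in Section~\ref{polHM}. There is no deep new content here; the claim is essentially a bookkeeping identity once one observes what the configuration at a generic point looks like.

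First I would start from \eqref{intHM} applied to $X=P$, which gives
\begin{equation*}
EN_l(P\cap a\La_c) = a^{-d}\det(\La)^{-1}\,\Ha^d\bigl(P\ominus a\check{B}_l\setminus P\oplus a\check{W}_l\bigr).
\end{equation*}
The task is therefore to express this volume as $\sum_{L\in\mathcal{L}} \Ha^d(X_L)\mathds{1}_{\{B_L=B_l\}}$. Recall from Section~\ref{polHM} that $\R^d$ is the disjoint union of the sets $X_{l_1,\dots,l_N}$ with $l_i$ ranging over $\{0,\dots,2^{n^d}-1\}$, which is a refinement of the index set $\mathcal{L}$.

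Next I would invoke the key geometric observation stated just before the corollary: for $x\in X_{l_1,\dots,l_N}$ one has $(x+aC_{0,0}^n)\cap P=x+aB_{l_1,\dots,l_N}$. This means that as one moves $x$ inside a single $X_L$, the binary image $(x+aC_{0,0}^n)\cap P$ is constantly equal to the configuration with black pixels $B_L=\bigcap_{i=1}^N B_{l_i}$ and white pixels $W_L=C_{0,0}^n\setminus B_L$. Consequently $x$ lies in the hit-or-miss transform $P\ominus a\check{B}_l\setminus P\oplus a\check{W}_l$ if and only if $B_L=B_l$ (and then necessarily $W_L=W_l$). Summing volumes gives
\begin{equation*}
\Ha^d\bigl(P\ominus a\check{B}_l\setminus P\oplus a\check{W}_l\bigr)=\sum_{L}\Ha^d(X_L)\mathds{1}_{\{B_L=B_l\}},
\end{equation*}
where the sum is a priori over all multi-indices with $l_i\in\{0,\dots,2^{n^d}-1\}$. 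To restrict to $\mathcal{L}$, I would simply note that if some $l_i=0$ then $B_{l_i}=\emptyset$, so $B_L=\emptyset$; since we are considering $l\neq 0$ (so $B_l\neq\emptyset$), these terms contribute nothing and can be dropped. This proves the first formula.

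For the second formula, plug into Definition~\ref{funddef}:
\begin{equation*}
E\hat{V}_q(P)=\sum_{l=1}^{2^{n^d}-1} w_l(a)\,EN_l(P\cap a\La_c),
\end{equation*}
substitute the first identity, interchange the two finite sums, and use that for each $L\in\mathcal{L}$ there is at most one index $l\in\{1,\dots,2^{n^d}-1\}$ with $B_l=B_L$. When such $l$ exists we identify $w_L(a)=w_l(a)$ in accordance with the convention $w_L(a)=w(\bigcap_i B_{l_i},a)$ recalled above; when $B_L=\emptyset$ there is no matching $l\geq 1$ and we use the standing assumption $w_0(a)=0$ so that the $L$-term vanishes consistently. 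The volumes $\Ha^d(X_L)$ on the right-hand side are then provided by Lemma~\ref{volpiece}. The only point that needs a line of care is the restriction of the $L$-summation to $\mathcal{L}$ and the matching of the weight conventions; everything else is immediate from what has already been set up.
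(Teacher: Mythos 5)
Your proof is correct and follows essentially the same route as the paper: equation \eqref{E=HM} combined with the observation that $(x+aC_{0,0}^n)\cap P=x+aB_L$ on $X_L$, so each $X_L$ lies entirely inside or outside the hit-or-miss transform according to whether $B_L=B_l$, and then summing with the weight conventions. Your extra care about the indices with some $l_i=0$ and the $w_0(a)=0$ convention just makes explicit what the paper leaves implicit.
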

 
For a local estimator $\hat{V}_q$, we introduce the following notation:
\begin{align*}
\mathcal{E}^N&=\{P\in \mathcal{P}^{d,N} \mid \lim_{a\to 0} E\hat{V}_q(P) \text{ exists } \},\\
\mathcal{V}^N&=\{P\in \mathcal{E}^N \mid \lim_{a\to 0} E\hat{V}_q(P) = V_q(P) \}.
\end{align*}
Similarly, for a combinatorial isotopy class $\mathcal{P}^{d,N}_\mu$ of simple polytopes, $\mathcal{E}^N_\mu=\mathcal{P}_\mu^{d,N}\cap \mathcal{E}^N$ and $\mathcal{V}^{d,N}_\mu=\mathcal{P}_\mu^{d,N} \cap \mathcal{V}^N$. 

\begin{lemma}\label{estcor}
There exist measurable subsets ${V}^N_\mu, {E}^N_\mu $ of $ (S^{d-1})^N$ satisfying
\begin{align*}
&\tilde{\mathcal{E}}^{ N}_\mu  := (E^N_\mu \times \R^N) \cap \mathcal{US}^N_\mu \subseteq \mathcal{E}^{ N}_\mu, \\
&\tilde{\mathcal{V}}^{N}_\mu  := (V^N_\mu \times \R^N) \cap \mathcal{US}^N_\mu \subseteq \mathcal{V}^{ N}_\mu, \\
&\Ha^{dN } (\mathcal{E}^N_\mu \backslash \tilde{\mathcal{E}}^{N}_\mu)= \Ha^{dN } (\mathcal{V}^N_\mu \backslash \tilde{\mathcal{V}}^{N}_\mu) =0,
\end{align*}
such that on $\tilde{\mathcal{E}}^N_\mu$, $\lim_{a\to 0} E\hat{V}_q(P(U,t))$ is a polynomial in $t_1,\dots,t_N$ with coefficients depending only on $U$ and on $\tilde{\mathcal{V}}^{N}_\mu \subseteq \tilde{\mathcal{E}}^N_\mu$, this is homogeneous of degree $q$. 
%
\end{lemma}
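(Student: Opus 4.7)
The plan is to expand $E\hat{V}_q(P(U,t))$ as a polynomial in $t$ and then identify $E^N_\mu$ and $V^N_\mu$ as the loci in $(S^{d-1})^N$ where the resulting coefficients (a)~converge as $a\to 0$ and (b)~converge to the coefficients of $V_q(P(U,t))$, respectively.

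Combining Corollary \ref{brik} with the explicit formula \eqref{paen} of Lemma \ref{volpiece}, one finds that, for $(U,t) \in \mathcal{US}^{d,N}_\mu$ and $a$ sufficiently small,
\begin{equation*}
E\hat{V}_q(P(U,t)) = \sum_{|\alpha| \leq d} R_\alpha(U,a)\,t^\alpha,
\end{equation*}
the sum ranging over multi-indices $\alpha \in \N^N$, where each coefficient $R_\alpha(U,a)$ is a finite linear combination, with scalars of the form $\det(\La)^{-1}a^{-d}w_L(a)$ times universal integer factors, of products of the analytic functions $a_j(u_{i_1},\ldots,u_{i_s})$ from Lemma \ref{polvol}. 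In particular, for each fixed $a$, $R_\alpha(\cdot,a)$ is real-analytic on the open projection of $\mathcal{US}^{d,N}_\mu$ to $(S^{d-1})^N$ and hence Borel measurable. I would then set
\begin{equation*}
E^N_\mu := \Big\{U \in (S^{d-1})^N : \lim_{a \to 0} R_\alpha(U,a) \in \R \text{ for every } \alpha\Big\},
\end{equation*}
which is Borel as a finite intersection of Borel sets, and define $R_\alpha(U) := \lim_{a \to 0} R_\alpha(U,a)$ for $U \in E^N_\mu$. This gives $\tilde{\mathcal{E}}^N_\mu \subseteq \mathcal{E}^N_\mu$ directly, with the limit represented by the polynomial $\sum_\alpha R_\alpha(U)t^\alpha$.

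Next I would verify that $V_q(P(U,t))$ is itself a polynomial in $t$, homogeneous of degree $q$, with coefficients depending only on $U$ within $\mathcal{US}^{d,N}_\mu$. Using the representation $V_q(P) = \sum_{F \in \mathcal{F}_q(P)} \gamma(F,P)\,\Ha^q(F)$, the external angle $\gamma(F,P)$ depends only on the normal cone at $F$, hence only on $U$ on a combinatorial isotopy class, while $\Ha^q(F)$ is polynomial in $t$ by Lemma \ref{polvol}(v); the identity $V_q(P(U,at)) = V_q(aP(U,t)) = a^q V_q(P(U,t))$ for $a>0$ then forces homogeneity of degree $q$. I would then define
\begin{equation*}
V^N_\mu := \Big\{U \in E^N_\mu : \sum_\alpha R_\alpha(U) t^\alpha = V_q(P(U,t)) \text{ as polynomials in }t\Big\},
\end{equation*}
which is Borel (it equals the locus where finitely many coefficient identities hold) and yields $\tilde{\mathcal{V}}^N_\mu \subseteq \mathcal{V}^N_\mu$ together with the claimed homogeneity on $\tilde{\mathcal{V}}^N_\mu$.

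The remaining task is the measure-zero statements. By Fubini it suffices, for each fixed $U \notin E^N_\mu$ (resp.\ $U \in E^N_\mu \setminus V^N_\mu$), to show that the $t$-slice of $\mathcal{E}^N_\mu$ (resp.\ $\mathcal{V}^N_\mu$) has $\Ha^N$-measure zero. The auxiliary fact I would establish is: \emph{if a family of polynomials $P_a(t)$ in $N$ variables of degree at most $d$ converges pointwise on a set $T \subseteq \R^N$ of positive $\Ha^N$-measure as $a \to 0$, then every coefficient of $P_a$ converges.} For this, one picks $K = \binom{N+d}{d}$ points of $T$ at which the monomial evaluation matrix is non-singular; the set of bad $K$-tuples is the zero locus of a non-trivial polynomial on $\R^{NK}$, hence $\Ha^{NK}$-null, so by Fubini applied to $T^K$ such points do exist, and inverting the linear system recovers the coefficients as fixed linear combinations of the $P_a(t^{(i)})$, which converge by assumption. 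Applying this with $P_a(t) = E\hat{V}_q(P(U,t))$ gives the slice statement for $U \notin E^N_\mu$; for $U \in E^N_\mu \setminus V^N_\mu$ the two polynomials $\sum_\alpha R_\alpha(U)t^\alpha$ and $V_q(P(U,t))$ differ identically, so the agreement slice is contained in the zero set of a non-zero polynomial and is again $\Ha^N$-null. The Vandermonde/polynomial-identity lemma in this last step is what I expect to be the main technical point; once it is in place, the rest is bookkeeping on top of the explicit formula of Lemma \ref{volpiece}.
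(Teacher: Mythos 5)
Your proposal is correct and follows essentially the paper's own argument: both rest on the expansion of $E\hat{V}_q(P(U,t))$ coming from Corollary \ref{brik} and Lemma \ref{volpiece}, a Fubini slicing over $U$, the representation \eqref{Vqform} for $V_q$, and the key fact that pointwise convergence (resp.\ agreement) of polynomials of degree at most $d$ on a $t$-set of positive $\Ha^N$-measure forces convergence (resp.\ equality) of all coefficients --- the paper's appeal to ``linear independence of the monomials'', which you justify via the evaluation-matrix argument. The only difference is organizational: you define $E^N_\mu,V^N_\mu$ by coefficient convergence/agreement, making the inclusions immediate and the null-set claims the place where the polynomial lemma is used, whereas the paper defines them by positive slice measure, making the null-set claims immediate and the inclusions the place where the lemma is used; up to null sets the two definitions coincide.
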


\begin{proof}
Let 
\begin{equation*}
E^N_\mu=\{U\in (S^{d-1})^N \mid \Ha^N(\mathcal{E}^N_\mu\cap (\{U\}\times \R^N))>0 \}.
\end{equation*}
 Then 
\begin{align*}
\Ha^{dN}(\mathcal{E}^N_\mu \backslash \tilde{\mathcal{E}}^{ N}_\mu)&=\int_{(S^{d-1})^N \backslash E^N_\mu}\int_{\R^N}1_{\mathcal{E}^N_\mu} d\Ha^N d\Ha^{(d-1)N}\\& =0. 
\end{align*}

By Lemma \ref{volpiece} and  Corollary \ref{brik}, $E\hat{V}_q({P})$ has the form
\begin{equation*}
\sum_{\substack{n_1,\dots , n_N=0,\\ \sum n_i \leq d}}^{d-1} H_{n_1,\dots,n_N}(a) \prod_{i=1}^N t_i^{n_i}.
\end{equation*}
For a fixed $U\in E^N_\mu$, the function $H_{n_1,\dots,n_N}(a)$ depends only on $a$ and the limit when $a\to 0$ exists for all $t_1,\dots ,t_N$ in a set of non-zero $\Ha^N$-measure. It follows from linear independence of the monomials $\prod_{i=1}^N t_i^{n_i}$ that each limit $\lim_{a\to 0} H_{n_1,\dots,n_N}(a)$ must exist.  Denote this limit by $H_{n_1,\dots,n_N}$. Then
\begin{align}
\begin{split}\label{est-bpol}
\lim_{a\to 0}{}& \sum_{\substack{n_1,\dots , n_N=0,\\ \sum n_i \leq d}}^{d-1} H_{n_1,\dots,n_N}(a) \prod_{i=1}^N t_i^{n_i}\\ ={}&\sum_{\substack{n_1,\dots , n_N=0,\\ \sum n_i \leq d}}^{d-1} H_{n_1,\dots,n_N}\prod_{i=1}^N t_i^{n_i}
\end{split}
\end{align}
and in particular, $\tilde{\mathcal{E}}^{N}_\mu \subseteq \mathcal{E}^{ N}_\mu $.

Similarly, define 
\begin{equation*}
V^N_\mu=\{U\in (S^{d-1})^N \mid \Ha^N(\mathcal{V}^N_\mu \cap (\{U\}\times \R^N))>0 \}. 
\end{equation*}
Recall that
\begin{equation}\label{Vqform}
V_q(P) = \sum_{F\in \mathcal{F}_q(P)} \gamma(F,P) \Ha^q(F) 
\end{equation}
where 
\begin{equation}\label{vinkel}
\gamma(F,P)= \frac{\Ha^{d-q-1}(\text{pos}(u_{i_1^F},\dots,u_{i_{d-q}^F})\cap S^{d-1}) }{\Ha^{d-q-1}(S^{d-q-1})}
\end{equation}
is the external angle of $P$ at $F$ and clearly depends only on $U$. By Lemma~\ref{polvol}, each $\Ha^q(F)$ is a homogeneous polynomial of degree $q$ in $t_1,\dots,t_N$. Thus, for $U\in {E}_\mu^N$, either $\Ha^N(\mathcal{V}^N_\mu \cap (\{U\}\times \R^N))=0$ or the coefficients of~\eqref{est-bpol} and~\eqref{Vqform} must agree. In particular, $H_{n_1,\dots,n_N}=0$ unless $\sum n_i =q$.\qed
\end{proof}

Let $\tilde{\mathcal{E}}^{ N} =\bigcup_{\mu \in M} \tilde{\mathcal{E}}^{ N}_\mu$ and $\tilde{\mathcal{V}}^{ N} =\bigcup_{\mu \in M} \tilde{\mathcal{V}}^{ N}_\mu$. 

\begin{corollary}\label{weights}
Given a local estimator $\hat{V}_q$, there is a local estimator $\hat{V}'_q$ with polynomial weights such that on  $\tilde{\mathcal{E}}^{ N}$, $\lim_{a\to 0}E\hat{V}_q(P)=\lim_{a\to 0}E\hat{V}'_q(P)$. Moreover, there is an estimator $\hat{V}''_q$ with homogeneous weights of degree $q$ such that $\lim_{a\to 0}E\hat{V}''_q(P)={V}_q(P)$ on $\tilde{\mathcal{V}}^{N}$.
\end{corollary}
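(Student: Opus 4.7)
My plan is to exploit Corollary \ref{brik} together with Lemma \ref{volpiece} to write
\[
E\hat{V}_q(P(U,t)) \;=\; \det(\La)^{-1}\sum_{|\vec n|\leq d} a^{-|\vec n|}\Big(\sum_L c_{L,\vec n}(U)\,w_L(a)\Big)\prod_{i=1}^N t_i^{n_i},
\]
where $c_{L,\vec n}(U)$ denotes the coefficient of $a^{d-|\vec n|}\prod t_i^{n_i}$ in the expansion \eqref{paen} of $\Ha^d(X_L)$. By Lemma \ref{estcor} the limit $a\to 0$ exists on $\tilde{\mathcal E}^N$ and equals a polynomial $\sum_{|\vec n|\leq d}H_{\vec n}(U)\prod t_i^{n_i}$, so coefficient-by-coefficient
\[
a^{-|\vec n|}\sum_L c_{L,\vec n}(U)\,w_L(a)\;\longrightarrow\;H_{\vec n}(U)\det(\La)
\]
for every $\vec n$ with $|\vec n|\leq d$ and every $U\in E^N_\mu$, all $\mu$. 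The task is to reproduce these asymptotics using weights of the special form required.

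For the first claim I would make the ansatz $w'_l(a):=\sum_{k=0}^d a^kw^{(k)}_l$. Substituting and collecting monomials in $t$, the coefficient of $\prod t_i^{n_i}$ in $E\hat{V}'_q(P(U,t))$ becomes $\det(\La)^{-1}\sum_{k=0}^d a^{k-|\vec n|}\sum_L c_{L,\vec n}(U)w^{(k)}_L$; hence the limit exists and agrees with $H_{\vec n}(U)$ precisely when, for each $k$, the vector $w^{(k)}=(w^{(k)}_L)_L$ satisfies
\[
\sum_L c_{L,\vec n}(U)\,w^{(k)}_L\;=\;\delta_{|\vec n|,k}\,H_{\vec n}(U)\det(\La)
\qquad\text{for all }(\vec n,U)\text{ with }|\vec n|\geq k,\,U\in\textstyle\bigcup_\mu E^N_\mu.
\]
Call this target function $T^{(k)}$. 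Existence of $w^{(k)}$ will follow from a finite-dimensional closure argument: the linear map $M_k\colon\R^{|\mathcal L|}\to\R^{\{(\vec n,U):|\vec n|\geq k\}}$ given by $w\mapsto((\vec n,U)\mapsto\sum_L c_{L,\vec n}(U)w_L)$, restricted to the subspace on which $w_L$ depends only on $B_L$, has image of dimension at most $2^{n^d}$; any finite-dimensional subspace of a space of functions is closed under pointwise convergence. Applying $M_k$ to the net $(w_L(a)/a^k)_L$ and dividing the asymptotic identity above by $a^k$ yields $M_k(w(a)/a^k)\to T^{(k)}$ pointwise, so $T^{(k)}\in\Ima(M_k)$ and a suitable $w^{(k)}$ exists. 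A direct expansion then confirms $\lim_{a\to 0}E\hat{V}'_q(P)=\lim_{a\to 0}E\hat{V}_q(P)$ on $\tilde{\mathcal E}^N$.

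The second claim is the special case $k=q$ of the same argument, worked over $V^N_\mu$ instead of $E^N_\mu$. On $\tilde{\mathcal V}^N$ the coefficients $H_{\vec n}(U)$ vanish for $|\vec n|\neq q$, so the target reduces to $T_{\vec n,U}=H_{\vec n}(U)\det(\La)$ when $|\vec n|=q$ and $T_{\vec n,U}=0$ when $q<|\vec n|\leq d$; no condition is required for $|\vec n|<q$ since the accompanying factor $a^{q-|\vec n|}$ vanishes in the limit. One application of the closure argument supplies $(w''_L)_L$ with $M_q(w'')=T$, and $w''_l(a):=a^qw''_l$ is then the desired homogeneous estimator of degree $q$.

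The main obstacle I anticipate is the closure step: one must confirm both that a finite-dimensional subspace of $\R^{\{(\vec n,U)\}}$ is closed in the pointwise topology (standard, via a basis of the image paired with a dual family of evaluation points at which the basis functions are independent) and that $M_k(w(a)/a^k)$ truly converges pointwise to $T^{(k)}$ (which reduces to dividing the asymptotic relation by $a^k$ and using $|\vec n|\geq k$ to separate the genuine leading contribution from vanishing ones). Once these ingredients are in place, the construction of $\hat{V}'_q$ and $\hat{V}''_q$ and the verification of the limits are purely bookkeeping.
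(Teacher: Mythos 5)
Your argument is correct, and it reaches the conclusion by a construction that differs in an interesting way from the paper's. Both proofs start from the same data: Corollary \ref{brik} plus Lemma \ref{volpiece} give the expansion of $E\hat{V}_q(P(U,t))$ in powers of $a$ and monomials in $t$, and Lemma \ref{estcor} gives coefficient-wise convergence on $\tilde{\mathcal{E}}^N$. The paper then proceeds by selecting, for each degree $k$, a maximal family of coefficient functions $c_{l,k}$ with no linear relation on $P(\tilde{\mathcal{E}}^N)$, deduces from an invertible linear system at finitely many polytopes that the limits $w_{l,k}=\lim_a w_{l,k}(a)a^{k-d}$ exist, and finally builds the polynomial weights by an induction over $k$ that defines a linear map on the span of the original weight functions --- the induction being needed precisely to respect linear dependencies among the modified weight functions across different degrees. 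You sidestep that induction entirely: with the ansatz $w_l'(a)=\sum_k a^k w_l^{(k)}$ you only need, for each $k$ separately, a constant weight vector solving the linear constraints indexed by pairs $(\vec n,U)$ with $|\vec n|\ge k$, and you obtain it by observing that the target is the pointwise limit of $M_k(w(\cdot,a)/a^k)$ and that the image of $M_k$ on the finite-dimensional space of admissible weight vectors is closed under pointwise limits; the decoupling of degrees is legitimate because terms with $k>|\vec n|$ are killed by positive powers of $a$ while those with $k<|\vec n|$ are forced to vanish by the constraints. The underlying linear algebra (evaluation at a dual family of points) is the same as the paper's invertible-system step, but your packaging avoids the consistency bookkeeping and yields the homogeneous case as the single piece $k=q$ over $V^N_\mu$, exactly as you say. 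Two small points you should make explicit, neither a gap: the coefficients $c_{L,\vec n}$ depend on the combinatorial isotopy class $\mu$ as well as on $U$ (through $I_1(F_L)$, $I_2(F_L)$ and $\delta_L$), so the index set of your constraints should carry $\mu$; and the admissible subspace should consist of weight vectors that depend only on $B_L$ \emph{and} vanish when $B_L=\emptyset$, so that the solution produced by the closure argument corresponds to genuine configuration weights with $w_0=0$ --- the rescaled original weights lie in this subspace, so nothing changes.
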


\begin{proof}
By Lemma \ref{volpiece} and Corollary \ref{brik}, $E\hat{V}_q(P)$ takes the form
\begin{equation*}
E\hat{V}_q(P)=\sum_{l=1}^{2^{n^d}-1} w_l(a)\sum_{k=0}^d a^{k-d} c_{l,k}(P)
\end{equation*}
where the coefficients $c_{l,k}(P)\in \R$ have degree $d-k$ in $t$ and depend only on $P\in P(\mathcal{US}^{d,N})$.

For each $k=0,\dots, d$, choose $M_k \subseteq \{1,\dots , 2^{n^d}-1\}$ maximal with no linear relation between the coefficients $c_{l,k}(P)$ with $l\in M_k$ that holds for all $P\in P(\tilde{\mathcal{E}}^{N})$.
In particular, for $l\in M_k$ there are functions 
\begin{equation*}
w_{l,k}(a)= w_l(a) + \sum_{s \notin M_k} \alpha^s_{l,k} w_s(a)
\end{equation*}
for suitable $\alpha_{l,k}^s \in \R$ such that
\begin{equation}\label{limred}
\lim_{a\to 0} E\hat{V}_q(P)=\lim_{a\to 0} \sum_{k=0}^{d} \sum_{l\in M_k} w_{l,k}(a) a^{k-d} c_{l,k}(P)
\end{equation}
for all $P\in P( \tilde{\mathcal{E}}^{N})$.
By the proof of Lemma \ref{estcor}, the limit exists for each $k$ term in the sum on $P(\tilde{\mathcal{E}}^{N})$.

Choose $P_m \in P(\tilde{\mathcal{E}}^{ N}) $ for $m\in M_k$ such that the vectors $(c_{l,k}(P_{m}))_{l\in M_k}$ are linearly independent. The existence of the limit \eqref{limred} for all $P_m$ yields an invertible linear system, and solving this shows that also
\begin{equation*}
w_{l,k} :=\lim_{a\to 0 } w_{l,k}(a)a^{k-d}
\end{equation*}
exists for all $l$. 

Let $W$ be the formal vector space spanned by the functions $w_l(a)$ and let $W^q$ be the subspace spanned by $ \{w_{l,q}(a)\mid l\in M_q\} $.
%
Let $\Pol_d^k$ be the set of polynomials with $\R$-coefficients of degree at most $d$ that are zero below degree $d-k$.

We will show by induction in $k$ that there is a linear map 
\begin{equation*}
\text{span}\{W^q ,q=0,\dots , k \} \to \Pol_d^k
\end{equation*}
taking ${w}_{l,k}(a)$ to $\tilde{w}_{l,k}(a)$ such that 
\begin{equation*}
\lim_{a\to 0}\tilde{w}_{l,k}(a) a^{k-d} = w_{l,k}. 
\end{equation*}

For $k=0$, choose $\tilde{w}_{l,0}(a)= w_{l,0}a^{d} $. Suppose now that we have chosen $\tilde{w}_{l,k}(a)$ for all $k<q$ defining a map $\text{span}\{ W^k ,k<q \}\to \Pol_d^{q-1}$. 

We know $\lim_{a\to 0} w(a)a^{q-d} $ exists for all $w(a)\in W^q$. Choose a maximal set of independent $w_{l_i,k_i}(a)\in  W^q$, $i\in I$, with $k_i<q$ and extend this by $w_1^q,\dots , w_m^q$ to a basis of $W^q$. Then $\lim_{a\to 0}a^{q-d}w_{l_i,k_i}(a)=0$ and $\lim_{a\to 0}a^{q-d}w_{j}^q(a)=w_j^q$. For 
\begin{equation*}
w_{l,q}(a)=\sum_{i\in I} {\alpha_i}w_{l_i,k_i}(a) + \sum_{j=1}^m \beta_j w_j^q(a)
\end{equation*}
define
\begin{equation*}
\tilde{w}_{l,q}(a)= \sum_{i\in I} \alpha_{l,q}^i \tilde{w}_{l_i,k_i}(a) + \sum_{j=1}^m \beta_{l,q}^j w_j^q a^{d-q}.
\end{equation*}
This extends the map $\text{span}\{ W^k ,k<q \}\to \Pol_d^{q-1}$ to $\text{span}\{ W^k, {k\leq q}\} \to \Pol_d^q$, completing the induction step. 

Any linear extension of the resulting map with $k=d$
to a map $W \to \Pol_d^d$ yields a way of choosing the $w_l(a)$.

To prove the second claim, we choose $M_k$ such that $c_{l,k}(P)$ are independent for $P\in \tilde{\mathcal{V}}^{ N}$. Observe that for $P\in \tilde{\mathcal{V}}^{ N}$, \eqref{limred} is homogeneous in $t$ of degree $d-q$ by Lemma \ref{estcor}. In particular, the limit of each term with $k\neq q$ must vanish. Again, inverting a linear system shows that $w_{l,k}=0$ for $k\neq q$. Thus the inductive construction yields an estimator with homogeneous weights.
\qed
\end{proof}

\subsection{Intrinsic volumes of positive degree}\label{ivpd}

We are finally ready to prove Theorem \ref{main1} which we restate as follows:
\begin{theorem}\label{main1'}
Let $\hat{V}_q$ be any local algorithm for $V_q$ for $1\leq q\leq d-1$.

If $d-q$ is odd, $\hat{V}_q$ is asymptotically biased $\nu$-almost everywhere on $\mathcal{E}^{d,N}$. 

For $d-q$ even, $\hat{V}_q$ is asymptotically biased $\nu$-almost everywhere on $\mathcal{E}^N_\mu$ for all combinatorial isotopy classes $\mu \in M$ corresponding to polytopes having a $(d-q)$-face which is combinatorially isotopic to $\bigoplus_{i=1}^{d-q}[0,e_i]$. In particular, $\hat{V}_q$ is asymptotically biased on a set of positive $\nu$-measure.
\end{theorem}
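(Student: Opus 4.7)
By Corollary \ref{weights} I may assume homogeneous weights $w_l(a)=a^q w_l$. Corollary \ref{brik} then combines with Lemma \ref{volpiece} to give
\begin{equation*}
E\hat{V}_q(P) = a^{q-d}\det(\La)^{-1}\sum_{L\in\mathcal{L}} w_L\,\Ha^d(X_L),
\end{equation*}
with $\Ha^d(X_L)$ vanishing to order $a^{|I_1(F_L)|}$. Only the multi-indices with $|I_1(F_L)|=d-q$ contribute in the limit: those with $|I_1(F_L)|<d-q$ would blow up and must cancel, while those with $|I_1(F_L)|>d-q$ vanish. Grouping the surviving terms by the $q$-face $F=F_L$ yields a decomposition
\begin{equation*}
\lim_{a\to 0}E\hat{V}_q(P)=\sum_{F\in\mathcal{F}_q(P)} A_F(U,t),
\end{equation*}
where $A_F$ is a degree-$q$ polynomial in $\{t_i\}_{i\in I_1(F)\cup I_2(F)}$ whose coefficients are polynomial in the nearby normals, linearly combined with the weights $w_l$.

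On the target side, Lemma \ref{polvol}(v) expresses $V_q(P)=\sum_F \gamma(F,P)\,\Ha^q(F)$ as a sum of face-local homogeneous degree-$q$ polynomials in $t$. Perturbing the $t_i$'s within $\tilde{\mathcal{V}}^N_\mu$ for fixed $U$ and matching monomials in $t$ forces a face-by-face identity
\begin{equation*}
A_F(U,t)=\gamma(F,P)\,\Ha^q(F).
\end{equation*}
The left-hand side is, for fixed combinatorial type, a \emph{piecewise polynomial} function of the normals $u_{i_1^F},\dots,u_{i_{d-q}^F}$ and those of the neighbouring facets: the set of multi-indices $L$ producing a given configuration $B_L$ at $F$ jumps across finitely many algebraic hypersurfaces in $(S^{d-1})^{d-q}$, but within each open cell the coefficients depend algebraically on the normals. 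The right-hand side involves $\gamma(F,P)$, which for $d-q\geq 2$ is the spherical measure of $\pos(u_{i_1^F},\ldots,u_{i_{d-q}^F})\cap S^{d-1}$, a transcendental function of the normals by \eqref{vinkel}. A real-analytic identity between an algebraic and a transcendental function can hold on at most a measure-zero set, so the unbiased locus in each cell has $\nu$-measure zero.

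For $d-q$ odd, every combinatorial isotopy class $\mu$ admits a smooth deformation of the normals of some $q$-face that remains in one cell of the stratification while varying the spherical angle non-algebraically, forcing $\mathcal{V}^N_\mu$ to be $\nu$-null in all of $\mathcal{E}^N_\mu$. For $d-q$ even, the transcendence of $\gamma$ may degenerate at sufficiently symmetric (e.g.\ orthogonal) configurations, so I restrict to classes $\mu$ whose polytopes possess a $(d-q)$-face combinatorially isotopic to $\bigoplus_{i=1}^{d-q}[0,e_i]$: a small perturbation of the orthogonal normals off orthogonality within $\mu$ reactivates the transcendence of $\gamma$ and recovers the contradiction. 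The principal obstacle is the careful identification of the stratification --- pinning down which $B_L$ with $F_L=F$ persist on each cell and confirming that their contributions combine into a genuinely algebraic function of the normals --- together with the control needed to propagate the per-cell conclusion to the full $\mathcal{E}^N_\mu$ by a countable-union argument over isotopy classes.
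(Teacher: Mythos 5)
Your setup (reduction to homogeneous weights via Corollary \ref{weights}, the expansion of $E\hat{V}_q(P)$ through Corollary \ref{brik} and Lemma \ref{volpiece}, extraction of the surviving order-$a^{d-q}$ terms, and comparison of coefficients of $t$-monomials) matches the paper. The gap is in the step that carries the whole proof: you dismiss the identity between the estimator coefficient $H_I$ and the target coefficient $G_I$ by asserting that the left side is (piecewise) algebraic in the normals while the right side is transcendental because it involves the solid angle in \eqref{vinkel}. This is not justified, and in one case covered by the theorem it is false: for $d-q=1$ (i.e.\ $q=d-1$) the external angle at a facet is identically $\tfrac12$, so the right-hand side is itself a polynomial expression in the normals and the ``algebraic vs.\ transcendental'' dichotomy collapses entirely. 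Even for $d-q\geq 2$, $G_I$ is not a single solid angle but a sum of angles $\gamma(F,P)$ weighted by algebraic coefficients, and such combinations can very well be algebraic or even constant --- indeed the paper's own computation shows $\bar{G}_I\equiv 1$ at the orthogonal configurations (the angles at the vertices of the face $F_I$ sum to $V_0(F_I)=1$), and Gram-type relations produce further rational combinations. So transcendence of an individual $\gamma$ does not rule out that $G_I$ coincides with some algebraic $H_I$ produced by an adversarial choice of weights; proving that it cannot is exactly the content of the theorem, and your proposal does not supply it.

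The paper closes this gap by a symmetry argument at special configurations rather than by transcendence: one continues $H_I$ and $G_I$ analytically over a connected set $W'$, deforms the normals indexed by $I$ to an orthonormal frame and the neighbouring normals into its orthogonal complement, where $\bar{G}_I\equiv 1$; for $d-q$ odd a rotation reversing all signs on $\lin(u_i',i\in I)^\perp$ flips the sign of $\bar{H}_I$ (which is homogeneous of degree $d-q$ in those directions) while leaving $\bar{G}_I=1$, so the two analytic functions differ somewhere on $W'$ and hence agree only on a null set. For $d-q$ even this sign trick fails, and the cube-face hypothesis is used for a quite different purpose than the one you suggest: at a configuration whose neighbouring normals are those of an orthogonal box, an inclusion--exclusion over the supersets $I'\supseteq I$ collapses $\bar{H}_I$ on the joint agreement locus to a linear form $\langle x,v_1\rangle$, which cannot equal the constant $1$ under all rotations in $SO(K^\perp)$ with $\dim K^\perp=q+1\geq 2$. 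Your account of the even case (``a small perturbation off orthogonality reactivates the transcendence'') neither identifies this mechanism nor gives any substitute argument, and the paper's closing remark that the even case for general isotopy classes would require a different argument is a further indication that the measure-zero conclusion cannot be obtained by the transcendence claim alone. As it stands, the proposal therefore has a genuine missing idea at its core.
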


\begin{proof}
Suppose we are given an estimator $\hat{V}_q$.  Fix a combinatorial isotopy class $\mathcal{US}^{d,N}_\mu$. We want to show that $\Ha^{dN}(\mathcal{V}^N_\mu) = 0$. It is enough to show $\Ha^{(d-1)N}({V}^N_\mu)=0$.

By Corollary \ref{weights}, we may assume that the weights are homogeneous of degree $q$. Then
\begin{equation}\label{estcoef}
\lim_{a\to 0} E\hat{V}_q(P) = \det(\La)^{-1} \sum_{L\in \mathcal{L}} w_{L} A^{q}_{L}(U,t)
\end{equation}
where $A_{L}^{q}$ is the coefficient in front of $a^{d-q}$ in Formula~\eqref{paen} for $\Ha^{d}(X_{L})$. We write $w_L'= \det(\La)^{-1} w_L$ to shorten notation. In particular, \eqref{estcoef} is a homogeneous polynomial in $t_1,\dots, t_N$ of degree $q$. On  $\tilde{\mathcal{V}}_\mu^{ N}$, this must equal 
\begin{equation}\label{truecoef}
V_q(P)= \frac{1}{q!} \sum_{F\in \mathcal{F}_q(P)} \gamma(F,P) \Ha^{q}(F).
\end{equation}

Choose a $(d-q)$-face $F_I=\bigcap_{i\in I} F_i$ with $|I|=q$. We want to compare the coefficients in front of $\prod_{i\in I} t_i$. Denote the coefficient in \eqref{estcoef} by $H_I$ and the one in \eqref{truecoef} by $G_I$. 
Then $H_I$ must equal $G_I$ on $\tilde{\mathcal{V}}^{ N}_\mu$.
Both $H_I$ and $G_I$ depend only on $U\in (S^{d-1})^N$. In order to show that $\Ha^{(d-1)N}(V^N_\mu)=0$, it is enough to show that almost all points in $V^N_\mu$ have a small neighborhood $W\subseteq (S^{d-1})^N$ with $\Ha^{(d-1)N}(W\cap \{ H_I=G_I \})=0$. 

For $c_1\neq c_2 \in C_{0,0}^n$, let $H_{c_1,c_2}$ denote the hyperplane $\{x\in \R^d \mid \langle x, c_1\rangle=\langle x, c_2\rangle\}$. Let 
\begin{equation*}
D=\bigcup_{c_1\neq c_2 \in C_{0,0}^n}H_{c_1,c_2}.
\end{equation*}
Observe that for a set $S\subseteq C_{0,0}^n$ and a connected component $E$ in $S^{d-1}\backslash D$, there is a unique $s\in S$ such that $h(S,u)=\langle s,u \rangle$ for all $u \in E$. Moreover, all the indicator functions $\delta_l$ are constant on $E$.

Since $\Ha^{d-1}(D)=0$, almost all $(u_1,\dots , u_N) \in V^N_\mu$ belong to $ (S^{d-1}\backslash D)^N$. Let such $U \in V^N_\mu \cap (S^{d-1}\backslash D)^N$ be given. Choose a small connected neighborhood $W$ contained in $ \mathcal{US}_\mu^{d,N} \cap ((S^{d-1}\backslash D)^N\times \R^N)$. Then there are vectors $b_{l}^i\in B_{l}\cup \{0\}$, $w_{l}^i\in \check{W}_{l}\cup \{ 0 \}$, and  $c_i \in C_{0,0}^n$ such that 
\begin{align*}
&h(B_{l},u_i)\delta_l(u_i)=\langle b_{l}^i ,u_i \rangle,\\
&h(\check{W}_{l},u_i)\delta_l(u_i)=\langle w_{l}^i,
 u_i \rangle,\\
&h(C_{0,0}^n,u_i)=\langle c_{i},u_i \rangle,
\end{align*}
whenever $(u_1,\dots , u_N) \in W$.
Thus $H_I$ has the form
\begin{align*}
H_I(U)={}& \sum_{L\in \mathcal{L}} w_L '\mathds{1}_{I\subseteq I_1(F_L)\cup I_2(F_L)} \sum_{k_1,\dots , k_d\in I_1(F_L)\cup I_2(F_L)}\\
& a_{k_1,\dots,k_d}(U)  \prod_{i \in I_1(F_L)} (\langle b_{l_i}^i ,u_i \rangle^{e(i)}-\langle w_{l_i} ,u_i \rangle^{e(i)}) \\
&\times d_{k_1,\dots,k_d}  \prod_{j \in I_2(F_L)} \langle c_{j},u_j \rangle^{e(j)}
\end{align*}
on $W$.  Here $d_{k_1,\dots,k_d}$ are certain constants and $e(i)$ are certain exponents with 
\begin{equation*}
\sum_{i\in I_1(F_L)\cup I_2(F_L)} e(i)= d-q.
\end{equation*}
 In particular, $H_I$ is an analytic function, depending only on the $u_i$ with $i\in I \cup I_2(F_I)$.

Similarly, by \eqref{truecoef} and Lemma \ref{polvol}
\begin{align}\nonumber
G_I(U)= {}&\frac{1}{q!}  \sum_{\substack{F\in \mathcal{F}_q(P)\\ F\cap F_I \neq \emptyset}} \gamma(F,P) \sum_{v\in F} \sum_{\sigma \in \Sigma_q}  \sum_{(j_{d-q+1},\dots , j_d)\in J_{v,\sigma}}\\
&\prod_{s=d-q+1}^d a_{{i_1^F},\dots,{i_{d-q}^F}, {i_{\sigma(d-q+1)}^v},\dots ,{i_{\sigma(s)}^v}}(j_s)\label{GI}
\end{align}
where $J_{v,\sigma}$ are certain index sets. Recall also that $u_{i}$ is the normal vector of $P$ at the facet $F_{i}$ and that $F=\bigcap_{k=1}^{d-q}F_{i_k^F}$. Each $\gamma(F,P)$ is an analytic function of $u_{i_1^F},\dots,u_{i_{d-q}^F}$ which is defined whenever $u_{i_1^F},\dots,u_{i_{d-q}^F}$ are linearly independent. This follows from Schl\"{a}fli's formula \cite{sch}, see also \cite{aomoto}, according to which $\gamma(F,P)$ is analytic as a function of the angles between the faces in $\pos(u_{i_1^F},\dots,u_{i_{d-q}^F} )$, and these angles can again be expressed analytically as functions of $u_{i_1^F},\dots,u_{i_{d-q}^F}$. It follows that $G_I$ is analytic on $W$.

The formulas for $H_I$ and $G_I$, initially defined on $W$, naturally extend to analytic functions $\bar{H}_I,\bar{G}_I:W' \to \R$ where $W' \subseteq (S^{d-1})^{|I \cup I_2(F_I)|}$ is the largest connected subset containing $W$ and such that $u_{i_1^v},\dots,u_{i_d^v}$ are li\-ne\-ar\-ly independent for every  $v\in \mathcal{F}_0(F_I)$.  

Choose a path through independent unit vectors inside $\lin(u_i,i\in I)^{|I|}$ from $(u_i)_{i\in I}$ to an orthonormal frame $(u_i')_{i\in I}$.
Next, for each $u_j$ with $j\in I_2(F)$, choose a path inside $\lin(u_j, u_i,i\in I) \backslash \lin(u_i,i\in I)$ from $u_j$ to its normalized projection onto $\lin(u_i,i\in I)^\perp$ denoted by $u_j'$. Together, this defines a path inside $W'$ from $(u_i)_{i\in I\cup I_2(F_I)}$ to $(u_i')_{i\in I\cup I_2(F_I)}$ such that the $u_{i}'$ with $i\in I$ are orthogonal and each $u_j'$ with $j\in I_2(F_I)$ is orthogonal to all $u_{i}'$ with $i\in I$. 

The proof of Lemma \ref{volpiece} shows that a term with index ${j_1,\dots,j_d}$ in the summation formula for $\Ha^{d}(X_{L})$ can only contribute a $\prod_{i\in I} t_i$ term if $I\subseteq \{k_1,\dots , k_d\}$, and by Lemma \ref{polvol} (iii) and (iv), if ${i_s^v}\in I$, then 
\begin{equation*}
a_{j_s}(u_{i_1^v}',\dots,u_{i_s^v}')=\begin{cases}1,& \text{for }j_s=s,\\
0,& \text{otherwise}.
\end{cases}
\end{equation*}
Moreover, if ${i_s^v} \notin I$, and $j_s\in I$, 
\begin{equation*}
a_{j_s}(u_{i_1^v}',\dots,u_{i_s^v}')=0. 
\end{equation*}
Thus, $a_{k_1,\dots,k_d}$ can only be non-zero if every element of $I$ appears exactly once in $k_1,\dots,k_d$. 
In the formula for $\Ha^{d}(X_{L})$ given in Lemma~\ref{volpiece}, this means that $n(i)=1$ for all $i\in I$. Hence the term $\prod_{i\in I} t_i$ can only appear if $I \subseteq I_2(F_{L})$. Define the index set
\begin{align*}
J_L=\{{}&(k_1,\dots , k_d) \mid  \exists v \in \mathcal{F}_0(F_I\cap F_{L}):\\
& k_1,\dots , k_d\in I_1(v),\, 
\forall j \in I: n(j)=1 \}.
\end{align*}
In the formula for $\Ha^{d}(X_{L})$, the coefficient in front of $\prod_{i\in I} t_i$  applied to the point $(u_i')_{i\in I\cup I_2(F_I)}$ has the form 
\begin{align*}
\frac{1}{d!} {}&\sum_{ (k_1,\dots , k_d)\in J_L } a_{k_1,\dots,k_d} \\
&\times \prod_{i \in I_1(F_L)} (\langle b_{l_i}^i,u_i'\rangle^{n(i)}-\langle w_{l_i}^i,u_i'\rangle^{n(i)})\\
&\times \prod_{j \in I_2(F_L) \backslash I} \langle c_{j},u_j'\rangle^{n(j)}.
\end{align*}
and thus
\begin{align*}
\bar{H}_I{}&((u_i')_{i\in I\cup I_2(F_I)})=\frac{1}{d!}\sum_{L:I\subseteq I_2(F_L)} w_L' 
\sum_{(k_1,\dots , k_d)\in J_L }  \\
&\quad a_{k_1,\dots,k_d}\prod_{i \in I_1(F_L)} (\langle b_{l_i}^i,u_i'\rangle^{n(i)}-\langle w_{l_i}^i,u_i'\rangle^{n(i)}) \\
&\quad \times \prod_{j \in I_2(F_L) \backslash I} \langle c_{j},u_j'\rangle^{n(j)}.
\end{align*}

On the other hand, $\bar{G}_I$ is given by
\begin{align*}
\bar{G}_I ((u_i')_{i\in I\cup I_2(F_I)})&=\sum_{J: F_J\cap F_I \in \mathcal{F}_0(P)} \gamma(u_j',j\in J)\\
&=\sum_{v \in \mathcal{F}_0(F_I)} \gamma(v,F_I)\\
&= V_0(F_I)\\
& =1.
\end{align*}
where $\gamma(u_j',j\in J)$ is given by the formula \eqref{vinkel} with $u_{i_1^F},\dots, u_{i_1^F}$ replaced by $\{u_j',j\in J\}$.
%

The first equality follows because the index set $J_{v,\sigma}$ in \eqref{GI}
consists of the $j_{d-q+1},\dots, j_d$ such that
\begin{equation*}
\prod_{s=d-q+1}^d  t_{{i_1^F},\dots,{i_{d-q}^F}, {i_{\sigma(d-q+1)}^v},\dots ,{i_{\sigma(s)}^v}}(j_s) = \prod_{i\in I} t_i.
\end{equation*}
In particular,
\begin{equation*}
I\subseteq \{{i_1^F},\dots,{i_{d-q}^F}, {i_{\sigma(d-q+1)}^v},\dots ,{i_{\sigma(s)}^v}\} \subseteq I\cup I_2(F_I).
\end{equation*}
Suppose some $i_k^F\in I$. Then some $j_s=k$. But then 
\begin{equation*}
 a_{j_s}(u_{i_1^F}',\dots,u_{i_{d-q}^F}', u_{i_{\sigma(d-q+1)}^v}',\dots ,u_{i_{\sigma(s)}^v}')=0
 \end{equation*}
by (iv) in Lemma \eqref{Haface}. Therefore, only terms with $\{{i_{d-q+1}^v},\dots ,{i_{s}^v}\}=I$ survive in \eqref{GI}.
This means that $v=F\cap F_I \in \mathcal{F}_0(F_I)$ and the contribution from this $v$ in \eqref{GI} is 1 for each $\sigma\in \Sigma_q$ by (iii) in Lemma \eqref{Haface}.

The second equality follows because if $F_J\cap F_I=v$ then $u_j'$, $j\in J$, are the normal vectors of $F_I$ corresponding to the faces meeting at the vertex $v$.



Suppose $d-q$ is odd and $q>0$. Choose a rotation $R\in SO(d)$ changing all signs in $\lin(u_i',i\in I)^\perp$. This is possible because $\dim(\lin(u_i',i\in I)^\perp)=d-q<d$. This clearly preserves $\bar{G}_I=1$ since the orthogonality properties among the $u_i'$ are not changed. Since the $a_{k_1,\dots, k_d}$ are rotation invariant and $d-q$ is odd, $\bar{H}_I$ changes sign.  As $SO(d)$ is connected, there is a path from $(u_i')_{i\in I\cup I_2(F_I)}$ to $(Ru_i')_{i\in I\cup I_2(F_I)}$ inside $W'$. It follows that $\bar{H}_I$ and $\bar{G}_I$ cannot agree everywhere on $W'$. As they are both analytic and $W'$ is connected, $\Ha^{(d-1)N}(W'\cap \{\bar{H}_I = \bar{G}_I\})=0$. This proves the claim in the case where $d-q$ is odd.

If $d-q$ is even, we assume that $\mathcal{US}^N_\mu$ is chosen such that the elements have a $(d-q)$-face which is  combinatorially isotopic to $[0,1]^{d-q}$. Assume that $F_I$ is this face.  Define $\bar{H_{I'}}$ and $\bar{G}_{I'}$ for $I\subsetneq I'$ in a way similar to $\bar{H_{I}}$ and $\bar{G}_{I}$. In particular, $\bar{G}_{I'}=0$. It is enough to show that 
\begin{equation*}
\Ha^{(d-1)N}(W' \cap \{\bar{H}_{I'}= \bar{G}_{I'}\} )=0
\end{equation*}
for some $I\subseteq I'$ since $V^N_\mu \subseteq A:=\bigcap_{I\subseteq I'} \{ \bar{H}_{I'}= \bar{G}_{I'}\}$.

Since $(u_i')_{i\in I_2(F_I)}$ is exactly the set of normal vectors of $F_I $ considered as a subset of $\aff(F_I)$, it is possible to choose a path from $(u_i')_{i\in I_2(F_I)}$ inside $\lin(F_I)$ to $(u_i'')_{i\in I_2(F_I)}$ such that these are the normal vectors $\{\pm v_1,\dots,\pm v_{d-q}\}$ of the or\-tho\-go\-nal box $\bigoplus_{i=1}^{d-q} [0,v_i]$. This ensures that for all $v\in \mathcal{F}_0(F_I)$, the $u_i''$ having $i\in I_1(v)$ are orthogonal. 

For $(k_1,\dots,k_d)\in J_L$, $\bigcap_s F_{k_s}\subseteq F_I$. By (iii) in Lemma~\ref{polvol}, the only terms in \eqref{ajformel} contributing to $a_{k_1,\dots,k_d}$ have $j_s=s$ and $i_{\sigma(s)}^v=k_s$ for all $s$. But then $\{k_1,\dots,k_d\}=I_1(v)$ for some $v\in \mathcal{F}_0(F_I)$ and $\sigma$ is uniquely determined. In this case $a_{k_1,\dots,k_d}=1$, again by Lemma \ref{polvol} (iii). Hence 
\begin{align*}
\bar{H}_I{}&((u_i'')_{i\in I\cup I_2(F_I)})=N_I \sum_{L:I\subseteq  I_2(F_L)} w_L' \\
&\times \prod_{i \in I_1(F_L)} (\langle b_{l_i}^i,u_i''\rangle-\langle w_{l_i}^i,u_i''\rangle) \prod_{j \in I_2(F_L) \backslash I} \langle c_{j},u_j''\rangle
\end{align*}
where $N_{I}=|\mathcal{F}_0(F_I)|$.

A similar argument for $I'$ with $I\subseteq I' \subseteq I\cup I_2(F_I)$ shows that the coefficient $\bar{H}_{I'}$ in front of $\prod_{i\in I'} t_i$ is 
\begin{align}\label{barH}
\bar{H}_{I'}{}&((u_i'')_{i\in I\cup I_2(F_I)})=N_{I'}\sum_{L:I'\subseteq  I_2(F_L)} w_L' \\
&\times \prod_{i \in I_1(F_L)} (\langle b_{l_i}^i,u_i''\rangle-\langle w_{l_j}^i,u_i''\rangle) \prod_{j \in I_2(F_L) \backslash I'} \langle c_{j},u_j''\rangle.\nonumber
\end{align}

Suppose $(u_i'')_{i\in I\cup I_2(F_I)}\in A$. Then \eqref{barH} vanishes. If $I\subsetneq I'$, multiplication by $\prod_{j \in I'\backslash I} \langle c_{j},u_j''\rangle$ shows that also
\begin{align*}
\bar{K}_{I'}{}&((u_i'')_{i\in I\cup I_2(F_I)}):=N_I\sum_{L:I'\subseteq I_2(F_L)} w_L' \\
&\times \prod_{i \in I_1(F_L)} (\langle b_{l_i}^i,u_i''\rangle-\langle w_{l_i}^i,u_i''\rangle) \prod_{j \in I_2(F_L) \backslash I} \langle c_{j},u_j''\rangle\\
 ={}& 0.
\end{align*}
 Hence, on $A$
\begin{align*}
\bar{H}_I{}&((u_i'')_{i\in I\cup I_2(F_I)}) = \sum_{I\subsetneq I'} (-1)^{|I'|-|I|+1} \bar{K}_{I'} \\
&+ N_I\sum_{L:I =  I_2(F_L)} w_L' \prod_{i \in I_1(F_L)} (\langle b_{l_i}^i,u_i''\rangle-\langle w_{l_i}^i,u_i''\rangle) \\
=& N_I\sum_{L:F_I \cap  F_L \in \mathcal{F}_0(F_I)} w_L' \prod_{i \in I_1(F_L)} \langle b_{l_i}^i-  w_{l_i}^i,u_i''\rangle  \\
=& N_I\sum_{l_{1},\dots, l_{ d-q}=1}^{2^{n^d}-1} w_{l_1,\dots, l_{d-q}}' \prod_{j=1}^{d-q}
\sum_{\eps_{j} \in \{\pm 1\}}\\ & \langle b_{l_j}(\eps_jv_j) - w_{l_j}(\eps_jv_j),\eps_jv_j\rangle
\end{align*}
where $b_{l_j}(\eps_jv_j)=b_{l_j}^i$ and $w_{l_j}(\eps_jv_j)=w_{l_j}^i$ if $\eps_jv_j=u_i''$.

For $l\in \{1,\dots 2^{n^d}-1\}$, let 
\begin{align*}
\alpha(l)={}&N_I\sum_{l_{2},\dots, l_{ d-q}=1}^{2^{n^d}-1} w_{l, l_2, \dots, l_{d-q}}' \sum_{\eps_2, \dots ,\eps_{d-q} \in \{\pm 1\}}\\
& \prod_{j=2}^{d-q} \langle b_{l_j}(\eps_jv_j)- w_{l_j}(\eps_jv_j),\eps_jv_j\rangle.
\end{align*}
This depends only on $l$ and $ v_j$ for $j=2,\dots ,d-q$.
Then  
\begin{align*}
\bar{H}_I{}&(( u_i'')_{i \in I \cup I_2(F_I)})\\
&=\sum_{l=1}^{2^{n^d}-1} \alpha(l) (\langle b_l^{j_1}-w_l^{j_1},v_{1}\rangle +
 \langle b_{l}^{j_2}-w_{l}^{j_2},-v_{1}\rangle )\\
 &=\langle x, v_{1}\rangle.
\end{align*}
where $v_{1}=u_{j_1}''$ and $-v_{1}=u_{j_2}''$ and $x\in \R^d$ is some vector depending only on $v_2,\dots , v_{d-q}$. It follows  that 
\begin{equation}\label{Hid}
\bar{H}_I((Ru_i'')_{i \in I \cup I_2(F_I)}) = \langle x, Rv_{1}\rangle 
\end{equation}
for any rotation $R\in SO(K^\perp)$, where $SO(K^\perp)$ is the subgroup of $SO(d)$ that fixes $K=\lin(v_2,\dots , v_{d-q})$. But $v_{1}$ is orthogonal to $K$ and $\dim K^\perp =q+1>1$, so \eqref{Hid} cannot equal $\bar{G}_I((Ru_i'')_{i \in I \cup I_2(F_I)})=1$ for all rotations $R\in SO(K^\perp)$. Thus, there must be an $R\in SO(K^\perp)$ such that $(Ru_i'')_{i \in I \cup I_2(F_I)}\notin A$. But then 
\begin{equation*}
H_{I'}((Ru_i'')_{i \in I \cup I_2(F_I)})\neq G_{I'}((Ru_i'')_{i \in I \cup I_2(F_I)})
\end{equation*}
 for at least one $I\subseteq I'$. Since $SO(K^\perp)$ is path connected, $(Ru_i'')_{i \in I \cup I_2(F_I)}\in W'$ and it follows that 
\begin{equation*} 
\Ha^{(d-1)N}(W'\cap \{H_{I'}=G_{I'}\})=0
\end{equation*}
as in the odd case.\qed
\end{proof}

The theorem does not explicitly construct the polytopes for which $\hat{V}_q$ is biased. However, consider the space of orthogonal boxes 
\begin{equation*}
B(U,t)=\bigoplus_{i=1}^d [0,t_iu_i]
\end{equation*}
parametrized by $U\in SO(d)$ and $t\in (0,\infty)^d$.
%

\begin{corollary}\label{box}
Let $\hat{V}_q$ be a local algorithm for $V_q$ where $1\leq q \leq d-1$. Then $\hat{V}_q(B(U,t))$ is asymptotically biased for almost all $(U,t) \in SO(d) \times (0,\infty)^d$.
\end{corollary}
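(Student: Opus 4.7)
The plan is to specialize the analytic framework from the proof of Theorem~\ref{main1'} to the submanifold of orthogonal boxes. Every box $B(U,t)$ has exactly $N=2d$ facets with outer normals $\pm u_1,\dots,\pm u_d$ and corresponding support numbers $t_1,0,\dots,t_d,0$; all orthogonal boxes lie in the single combinatorial isotopy class $\mu_0$ of cubes, and every $(d-q)$-face of $B(U,t)$ is itself an orthogonal $(d-q)$-box, hence combinatorially isotopic to $\bigoplus_{i=1}^{d-q}[0,e_i]$. Thus the hypothesis of Theorem~\ref{main1'} is satisfied for $\mu_0$ in both parities of $d-q$. The map
\[
\psi:SO(d)\times (0,\infty)^d \to \mathcal{US}^{d,2d}_{\mu_0},\quad (U,t)\mapsto \big((u_1,-u_1,\dots,u_d,-u_d),(t_1,0,\dots,t_d,0)\big)
\]
is a real-analytic embedding of the box parameter space, and $B(U,t)=P(\psi(U,t))$.

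By Corollary~\ref{weights} we may assume $\hat{V}_q$ has weights homogeneous of degree $q$. The proof of Theorem~\ref{main1'} then produces an index set $I$ with $|I|=q$ (so that $F_I$ is a $(d-q)$-face of the box) and real-analytic functions $\bar{H}_{I'}-\bar{G}_{I'}$, indexed by $I\subseteq I'$, on a connected open subset $W'\subseteq (S^{d-1})^{|I\cup I_2(F_I)|}$ whose common zero set contains every normal configuration of an asymptotically unbiased polytope in $\mathcal{P}^{d,2d}_{\mu_0}$. The compositions $(\bar{H}_{I'}-\bar{G}_{I'})\circ\psi$ are real-analytic on the connected manifold $SO(d)\times(0,\infty)^d$; it therefore suffices to show they are not all identically zero, after which the identity principle produces a Lebesgue null set containing all asymptotically unbiased parameters.

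The crucial observation is that the non-vanishing argument in the proof of Theorem~\ref{main1'} already takes place inside the image of $\psi$. In the even case, the configuration $(u_i'')$ constructed there is itself the normal set of an orthogonal box (the antipodal pairs $\pm v_1,\dots,\pm v_{d-q}$ together with $q$ vectors orthogonal to every $v_j$), and the rotation $R\in SO(K^\perp)$ fixes $v_2,\dots,v_{d-q}$ while rotating inside the $(q+1)$-dimensional space $K^\perp$: it hence maps orthonormal frames to orthonormal frames. Since $\bar{H}_I\circ\psi=\langle x,v_1\rangle$ is linear and non-constant in $v_1$ while $\bar{G}_I\circ\psi=1$, some $(\bar{H}_{I'}-\bar{G}_{I'})\circ\psi$ is non-trivial. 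In the odd case, the sign-flip on $\lin(u_i,i\in I)^\perp$ can be realized by an $R\in SO(d)$ after compensating its determinant by one additional sign flip on the $q$-dimensional subspace $\lin(u_i,i\in I)$, which is available because $q\geq 1$; as $R$ is orthogonal, both endpoints of the resulting path in $SO(d)$ correspond to orthogonal boxes (up to the relabeling $u_i\leftrightarrow -u_i$), and the predicted sign reversal of $\bar{H}_I$ persists.

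The main obstacle is verifying in the odd case that the compensating sign flip does not cancel the sign change of $\bar{H}_I$ from the theorem's proof. This amounts to checking, using the explicit formula for $\bar{H}_I$ in terms of the inner products $\langle b^i_{l_i},u_i\rangle$, that flipping one $u_i$ with $i\in I$ merely permutes the roles of a pair of configurations, contributing an even number of sign changes and leaving the overall factor $(-1)^{d-q}=-1$ from the theorem's proof intact.
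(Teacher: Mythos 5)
Your core argument is essentially the paper's own proof, and your even-case reasoning already proves the entire corollary: the paper's proof of Corollary~\ref{box} consists precisely of the observation that the ``$d-q$ even'' branch of the proof of Theorem~\ref{main1'} never uses the parity of $d-q$, only that $q\neq 0,d$ and that the polytope has a $(d-q)$-face combinatorially isotopic to a cube --- which an orthogonal box has for every $q$. Moreover, as you correctly note, the special configurations $(u_i'')$ and the rotations $R\in SO(K^\perp)$ used in that branch stay inside the family of box normal frames, so the non-vanishing of some $\bar{H}_{I'}-\bar{G}_{I'}$ is exhibited on the box submanifold and analyticity in $(U,t)\in SO(d)\times(0,\infty)^d$ finishes the argument. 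Consequently your separate treatment of the case $d-q$ odd, together with the ``main obstacle'' you flag there, is superfluous: simply run the even-case argument for all $1\leq q\leq d-1$. (For the record, the check you left open would in fact succeed, because the vectors $u_i$ with $i\in I$ enter $\bar{H}_I$ only through the rotation-invariant coefficients $a_{k_1,\dots,k_d}$, so a compensating sign flip inside $\lin(u_i,i\in I)$ does not change the value, while flipping the complementary vectors contributes the factor $(-1)^{d-q}$; but this branch is not needed.) Two smaller points: the identity $\bar{H}_I=\langle x,v_1\rangle$ holds only on the set $A$ where all $\bar{H}_{I'}=\bar{G}_{I'}$, so the clean formulation is the contrapositive used in the paper --- if every rotated frame lay in $A$, then $\langle x,Rv_1\rangle=1$ for all $R\in SO(K^\perp)$, impossible since $\dim K^\perp=q+1\geq 2$ --- though your conclusion is the right one; and the reductions of Lemma~\ref{estcor} and Corollary~\ref{weights} have to be rerun with the box parametrization (unbiasedness on a set of positive measure in $t\in(0,\infty)^d$ for fixed $U$, comparing coefficients of the monomials $\prod_{i\in S}t_i$ against $V_q(B(U,t))=\sum_{|S|=q}\prod_{i\in S}t_i$), which works verbatim but is stated by you, as by the paper, without comment.
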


\begin{proof}
This follows from the proof of Theorem \ref{main1} in the case $d-q$ even since the proof does not use the fact that $d-q$ is even, only that $q\neq 0,d$. \qed
%
%
\end{proof}

\begin{remark}
It seems likely that Theorem \ref{main1'} should hold for all combinatorial isotopy classes of simple polytopes in the case $d-q$ even as well, but a proof would require a different argument.
\end{remark}

\subsection{The Euler characteristic in 2D}\label{euler}
In this section we investigate the estimation of the Euler characteristic $V_0$ on $\mathcal{P}^2$ and prove Theorem \ref{main2} in the case $d=2$. 

From Section \ref{polHM} we have:
\begin{corollary}\label{2Dvol}
Let $P\in \mathcal{P}^{2,N}$ be given and let $\theta_{ij}$ denote the interior angle between $F_i$ and $F_j$, i.e.\ $\pi - \theta_{ij}$ is the angle between $u_i$ and $u_j$. For $a$ sufficiently small, 
\begin{align}\nonumber
E{}&\hat{V}_0(P)= \sum_{l=1}^{2^{n^d}-1}  w_l'(a)\sum_{i=1}^N \bigg((-h(B_l\oplus\check{W}_l,u_i))^+\\\nonumber
&\quad \times \bigg(a^{-1}\Ha^1(F_i)+\sum_{j\in I_2(F_i)} h(\check{C}_{0,0}^n,u_{j})\csc (\theta_{ij})\bigg) \\ \nonumber
&+\frac{1}{2}\delta_l(u_i)(h(\check{W}_l,u_i)^2-h(B_l,u_i)^2)\sum_{j\in I_2(F_i)}\cot (\theta_{ij})\bigg)\\\label{2Deu}
&+\sum_{l,k=1}^{2^{n^d}-1} w_{lk}'(a) \sum_{v\in \mathcal{F}_0(P)} \csc (\theta_{i_1^vi_2^v})\\\nonumber
&\quad \times (-h(B_l\oplus \check{W}_l,u_{i_1^v}))^+(-h(B_k\oplus \check{W}_k,u_{i_2^v}))^+
\end{align}
where $\delta_{l}(u)=\mathds{1}_{\{h(B_l\oplus \check{W}_l,u)<0\}}$ and $w_l'$ is as in the proof of Theorem \ref{main1'}.
\end{corollary}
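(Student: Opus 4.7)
The plan is to specialize Corollary~\ref{brik} and Lemma~\ref{volpiece} to $d=2$, and then to identify the resulting coefficients explicitly in terms of the interior angles $\theta_{ij}$. Starting from
$$E\hat V_0(P) = a^{-2}\det(\La)^{-1} \sum_{L\in\mathcal{L}} w_L(a)\,\Ha^2(X_L),$$
I would decompose the $L$-sum by the value of $|I^L|$. Only $|I^L|\in\{0,1,2\}$ contributes, since $F_L=\emptyset$ forces $\Ha^2(X_L)=0$ by Lemma~\ref{volpiece}: the case $|I^L|=0$ corresponds to the bulk term $F_L=P$ and must vanish under the normalization $w_{2^{n^d}-1}(a)=0$ implicit for $V_0$ on compact sets; $|I^L|=1$ produces the edge contributions indexed by $F_i$; and $|I^L|=2$ produces the vertex contributions indexed by $v=F_{i_1^v}\cap F_{i_2^v}$.

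The next step is to compute the coefficients $a_{k_1,k_2}(U)$ of Lemma~\ref{polvol} in two dimensions. The Gram--Schmidt definition gives $a_1(u_i,u_j)=\cot\theta_{ij}$ and $a_2(u_i,u_j)=\csc\theta_{ij}$, because the projection of $u_j$ onto $\lin(u_i)^\perp$ has norm $\sin\theta_{ij}$ and inner product $-\cos\theta_{ij}$ with $u_i$. Substituting into Lemma~\ref{polvol}(v) for $q=1$ recovers $\Ha^1(F_i) = \sum_{j\in I_2(F_i)}[\cot\theta_{ij}\,t_i+\csc\theta_{ij}\,t_j]$, and comparing the volume polynomial $\Ha^2(P)=\frac{1}{2}\sum_{k_1,k_2} a_{k_1,k_2}(U)\,t_{k_1}t_{k_2}$ with the edge decomposition $\Ha^2(P) = \frac{1}{2}\sum_i t_i\,\Ha^1(F_i)$ identifies
$$a_{i,i}(U)=\sum_{j\in I_2(F_i)}\cot\theta_{ij}, \qquad a_{i,j}(U)+a_{j,i}(U)=2\csc\theta_{ij}$$
for $j\in I_2(F_i)$.

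For the edge case $I^L=\{i\}$ with $l_i=l$, Lemma~\ref{volpiece} produces two sub-cases, $n(i)=2$ and $n(i)=n(j)=1$ with $j\in I_2(F_i)$. Substituting the above coefficients, the $t$-linear part collapses to $a(\beta_i-\omega_i)\Ha^1(F_i)$ via the edge length identity, and the remaining $O(a^2)$ terms reassemble as $\frac{a^2}{2}(\beta_i^2-\omega_i^2)\sum_j\cot\theta_{ij}$ and $a^2(\beta_i-\omega_i)\sum_j \zeta_j\,\csc\theta_{ij}$. Using $\delta_L(u_i)(\beta_i-\omega_i) = (-h(B_l\oplus\check W_l,u_i))^+$ together with $\beta_i^2-\omega_i^2=h(B_l,u_i)^2-h(\check W_l,u_i)^2$ and $\zeta_j=-h(C_{0,0}^n,u_j)$ converts this into the first double sum in the statement. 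For the vertex case $I^L=\{i_1^v,i_2^v\}$, only $n(i_1^v)=n(i_2^v)=1$ survives and Lemma~\ref{volpiece} gives directly
$$\Ha^2(X_L) = \delta_L\,a^2\,\csc\theta_{i_1^vi_2^v}\,(\beta_{i_1^v}-\omega_{i_1^v})(\beta_{i_2^v}-\omega_{i_2^v});$$
summing over $l,k,v$ with the weight $w_{lk}'(a)$ attached to the intersection configuration $B_l\cap B_k$ yields the second double sum.

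The main obstacle is the bookkeeping of signs and support-function conventions --- in particular, matching $\zeta_j=-h(C_{0,0}^n,u_j)$ against the $h(\check C_{0,0}^n,u_j)$ in the statement, and making the sign of $\beta_i^2-\omega_i^2$ vs.\ $h(\check W_l,u_i)^2-h(B_l,u_i)^2$ come out correctly. A secondary issue is the $|I^L|=0$ bulk contribution: its raw form is $a^{-2}w_{2^{n^d}-1}(a)\,\Ha^2(\bigcap_i H^-_{u_i,t_i+a\zeta_i})$, which blows up like $a^{-2}$ unless $w_{2^{n^d}-1}(a)=0$; this vanishing is natural for any sensible Euler-characteristic estimator on compact sets and is the implicit hypothesis under which the stated identity holds.
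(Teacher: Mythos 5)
Your proposal is correct and takes essentially the same route as the paper's own proof: Corollary~\ref{brik} combined with Lemma~\ref{volpiece} (equivalently, direct plane geometry), with the non-vanishing $X_L$ sorted into edge contributions ($|I^L|=1$) and vertex contributions ($|I^L|=2$), and the coefficients identified as $\cot\theta_{ij}$, $\csc\theta_{ij}$. The two sign issues you flag are discrepancies in the printed statement rather than obstacles to your derivation --- the computation yields $\zeta_j=-h(C_{0,0}^n,u_j)$ and $h(B_l,u_i)^2-h(\check{W}_l,u_i)^2$ where the corollary prints $h(\check{C}_{0,0}^n,u_j)$ and $h(\check{W}_l,u_i)^2-h(B_l,u_i)^2$ --- and your explicit treatment of the all-black configuration (whose weight $w_{2^{n^d}-1}(a)$ must be negligible for the identity to hold as an exact equality) is in fact more careful than the paper's proof, which passes over that term in silence.
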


As usual $\csc$ denotes the function $\frac{1}{\sin}$.

\begin{proof}
For $a$ sufficiently small, no more than two of the sets $X_{i,l}$ with $l\neq 0,2^{n^d}-1$ can intersect and if $\{i,j\}$ does not equal $I_1(v)$ for any vertex $v$ of $P$, then 
\begin{equation*}
X_{i,l}\cap X_{j,k}\subseteq X_{m,0}
\end{equation*}
 for some $m$. Thus, to use Corollary \ref{brik}, we need only compute:
\begin{align*}
\Ha^2{}&(X_{i,l}\cap \bigcap_{j\neq i} X_{j,2^{n^d}-1})=(-h(B_l\oplus\check{W}_l,u_i))^+\\
&\quad \times \bigg(a\Ha^1(F_i)+a^2\sum_{j\in I_2(F_i)} h(\check{C}_{0,0}^n,u_{j})\csc (\theta_{ij})\bigg) \\
& +\frac{a^2}{2}\delta_l(u_i)(h(\check{W}_l,u_i)^2-h(B_l,u_i)^2)\sum_{j\in I_2(F_i)}\cot (\theta_{ij}),\\
\Ha^2{}&(X_{i_1^v,l}\cap X_{i_2^v,k} \cap \bigcap_{m\neq i_1^v,i_2^v} X_{m,2^{n^d}-1})=a^2\csc (\theta_{i_1^v i_2^v})\\
&\quad \times (-h(B_l\oplus \check{W}_l,u_{i_1^v}))^+(-h(B_k\oplus \check{W}_k,u_{i_2^v}))^+.
\end{align*}
This follows from Lemma \ref{volpiece} or directly from plane geometric considerations, see Figure \ref{fi}. 
\end{proof}

\begin{figure}
\begin{equation*}
\setlength{\unitlength}{1cm}
\begin{picture}(7,4)
\put(0,3){\line(1,0){7}}
\put(0,3.8){\line(1,0){7}}
\put(0.5,3.3){$X_{i_1^v,l}$}
\put(0.7,1){$X_{i_2^v,k}$}
\put(3.7,1){$P$}
\put(4,3.3){$(-h(B_l\oplus \check{W}_l,u_{i_1^v}))^+$}
\put(2.9,1.8){$\theta_{i_1^vi_2^v}$}
\put(3.8,3){\line(0,1){0.8}}
\put(0,0){\line(1,2){2}}
\put(1,0){\line(1,2){2}}
\put(1.6,0.4){\line(1,2){1}}
\put(1.6,0.4){\line(1,0){5}}
\put(2.6,2.4){\line(1,0){3}}
\put(6.6,0.4){\line(-1,2){1}}
\qbezier(2.4,2)(3,2)(3,2.4)
\end{picture}
\end{equation*}
\caption{Illustration of the sets $X_{i_1^v,l}$ and $X_{i_2^v,k}$.}\label{fi}
\end{figure}
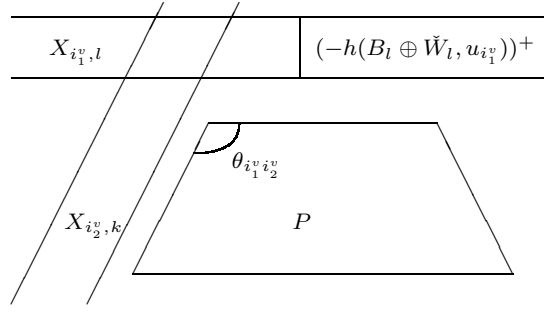

We introduce the following notation:
\begin{definition}
Let $\La\subseteq \R^2$ be the lattice spanned by $\xi=\{\xi_1,\xi_2\}$.
Define $D_\La \subseteq S^1$ by
\begin{equation*}
D_\La=\left\{\tfrac{z}{|z|}\mid z\in C_{-n(\xi_1+\xi_2),0}^{2n} \backslash\{0\}\right\}. 
\end{equation*}
We say that a vertex $v$ of a polygon $P$ is $n$-critical if $(P-v)\cap aC_{-n(\xi_1+\xi_2),0}^{2n}=\{0\}$ for all $a$ small enough or equivalently if $a^{-1}(P-v)\cap S^1$ is contained in a connected component of $S^1\backslash D_\La$.
\end{definition}

We can now prove Theorem \ref{main2} in the case $d=2$.

\begin{proof}(Theorem \ref{main2} for $d=2$)
Suppose the weights $w_l(a)$ of an asymptotically unbiased estimator $\hat{V}_0$ are given. We just need to show the existence of one element in $\mathcal{P}^{2,N}\backslash \mathcal{V}^{N}$ for some $N$, so assume for contradiction that $\mathcal{V}^{N}=\mathcal{P}^{2,N}$. Since all polygons are simple, Corollary \ref{weights} allows us to assume that the weights are homogeneous, i.e.\ $w_l(a)=w_l$.

Let 
\begin{align*}
v_1&=(\cos\varphi,\sin\varphi),\\
v_2&=(\cos(\varphi+\psi),\sin(\varphi+\psi)),
\end{align*}
where $(\varphi,\psi)\in U$ for some small open subset $U \subseteq \R^2$ such that $v_1$ and $v_2$ lie in the same connected component $E\subseteq S^1\backslash D_\La$.

Consider a parallelogram 
\begin{equation}\label{parallel}
P(\varphi,\psi,s_1,s_2)=[0,s_1v_1]\oplus[0,s_2v_2]
\end{equation}
for $s_1,s_2>0$.
 Then $P$ has two $n$-critical vertices at 0 and $s_1v_1+s_2v_2$. The normal vectors of $P$ are 
\begin{align*}
u_1=-u_3&= (-\sin \varphi, \cos \varphi),\\
u_2=-u_4&=(-\sin (\varphi+\psi), \cos (\varphi+\psi)).
\end{align*}

Observe that $\csc (\theta_{i_1^v i_2^v}) = \csc \psi$ for all $v\in \mathcal{F}_0(P)$, and if $I_2(F_i)=\{j_1,j_2\}$, then $\cot (\theta_{ij_1})=-\cot (\theta_{ij_2})$. 

Since $\lim_{a\to 0} E\hat{V}_0(P)$ exists, the coefficient in front of $a^{-1}$ in \eqref{2Deu}
\begin{equation*}
\sum_{l=1}^{2^{n^d}-1}  w_l'\sum_{i=1}^2 s_i\sum_{\eps =\pm 1}(-h(B_l\oplus\check{W}_l,\eps u_i))^+
\end{equation*}
must vanish.
This holds for all $s_1,s_2>0$, so for each $i=1,2$, also
\begin{equation*}
\sum_{l=1}^{2^{n^d}-1}  w_l' ((-h(B_l\oplus\check{W}_l,u_i))^++(-h(B_l\oplus\check{W}_l,-u_i))^+)=0
\end{equation*}
 and Corollary \ref{2Dvol} reduces to
\begin{align*}
E\hat{V}_0(P)={}&\csc \psi \sum_{l,k=1}^{2^{n^d}-1} w_{lk}'  \sum_{v\in \mathcal{F}_0(P)}\\
&(-h(B_l\oplus \check{W}_l,u_{i_1^v}))^+(-h(B_k\oplus \check{W}_k,u_{i_2^v}))^+
\end{align*}
for all $a$ sufficiently small.

Let $R$ denote the reflection of $C_{0,0}^n$ in the point $(\tfrac{n-1}{2}\xi_1,\tfrac{n-1}{2}\xi_2)$ and observe that 
\begin{equation*}
h(B_l\oplus \check{W_l},u)=h(RB_l\oplus (-RW_l),-u).
\end{equation*}
 Thus, since the weights are reflection invariant,
\begin{align}\nonumber
 E{}&\hat{V}_0(P)
= 2\csc \psi\sum_{l,k=1}^{2^{n^d}-1}(w'(B_l \cap B_k)+w'(RB_l \cap B_k))\\
&\times(- h(B_l\oplus \check{W}_l,u_1))^+(-h(B_k\oplus \check{W}_k,u_2))^+.\label{sums}
\end{align} 
for all sufficiently small $a$. 
 
Let $\beta_l^+,\omega_l^-:S^{1}\to C_{0,0}^n$ be functions such that $\langle \beta_l^+(u),u \rangle =h(B_l, u)$ and $\langle\omega_l^-(u), u\rangle =-h(\check{W}_l, u)$. In par\-ti\-cu\-lar, $ h(B_l\oplus \check{W}_l,u)=\langle \beta_l^+(u)-\omega_l^-(u),u\rangle $. Note that $\beta_l^+$ and $\omega_l^-$ are constant on the set  $R_{-\frac{\pi}{2}}E\subseteq S^1$ where $R_{-\frac{\pi}{2}}$ is the rotation by ${-\frac{\pi}{2}}$. Thus,  whenever $\varphi,\varphi+\psi\in E$,
\begin{align*} 
&\delta_{l}(u_1)=\delta_{l}(u_2),\\
&\beta_l=\beta_l^+(u_1)=\beta_l^+(u_2),\\
&\omega_l=\omega_l^-(u_1)=\omega_l^-(u_2),
\end{align*}
for some fixed vectors $\beta_l,\omega_l\in \R^2$.

Write $\omega_l-\beta_l=(x_l,y_l)$. Then  for $\varphi,\varphi+\psi\in E$,
\begin{align*}
(-h(&B_l\oplus \check{W_l},u_1))^+(-h(B_k\oplus \check{W}_k,u_2))^+\\
&+(-h(B_{k}\oplus \check{W}_k,u_1))^+(-h(B_l\oplus \check{W_l},u_2))^+\\
=& \delta_{l}(u_1)\delta_{k}(u_1)(\langle \omega_l-\beta_l,u_1\rangle\langle \omega_k-\beta_k,u_2\rangle \\
& + \langle \omega_k-\beta_k,u_1\rangle \langle \omega_l-\beta_l,u_2\rangle)\\
=&\delta_{l}(u_1)\delta_{k}(u_1)((-x_l\sin \varphi +y_l\cos \varphi) \\
&\quad\times (-x_{k}\sin (\varphi+\psi) + y_{k}\cos (\varphi+\psi))\\
&+(-x_{k}\sin \varphi +y_{k}\cos \varphi)\\
&\quad\times (-x_{l}\sin (\varphi+\psi) + y_{l}\cos (\varphi+\psi)))\\
=&\delta_{l}(u_1)\delta_{k}(u_1)((2x_lx_{k}\sin \varphi \sin(\varphi+\psi)\\
& +2y_ly_{k}\cos \varphi \cos(\varphi+\psi))-(x_{k}y_l +x_ly_{k})\\
&\quad\times (\sin \varphi \cos(\varphi+\psi)+\cos \varphi \sin (\varphi+\psi))).
\end{align*}

Since $w(B_l\cap B_k) = w(B_k\cap B_l)$ and 
\begin{equation*}
w(RB_l\cap B_k) =w(R( RB_l\cap B_k))=w( RB_k\cap B_l),
\end{equation*}
the terms in \eqref{sums} pair up, showing that $E\hat{V}_0(P(\varphi,\psi))$ is a linear combination of the functions
\begin{align}
\begin{split}\label{functions}
&\cos  \varphi \cos(\varphi+\psi)\csc \psi \\
&\qquad \qquad=\cos^2 \varphi \cot\psi - \sin\varphi \cos\varphi,\\ 
&\sin \varphi \sin(\varphi+\psi)\csc \psi \\
 &\qquad \qquad=\sin^2 \varphi \cot\psi + \sin\varphi \cos \varphi,\\ 
&(\cos \varphi \sin(\varphi+\psi)+\sin \varphi \cos (\varphi+\psi)) \csc \psi \\
&\qquad \qquad = \sin \varphi \cos \varphi \cot \psi+ \cos^2 \varphi -\sin^2 \varphi.
\end{split}
\end{align}

On the other hand, \eqref{sums} equals $V_0(P(\varphi,\psi))=1$ for all $(\varphi,\psi)\in U$. But the functions in \eqref{functions} are clearly linearly independent of the constant function $1$, yielding the contradiction.\qed
\end{proof}

\begin{corollary}\label{worst}
Any local estimator for ${V}_0$ has a worst case asymptotic relative bias on $\mathcal{P}^2$ of at least 1.
\end{corollary}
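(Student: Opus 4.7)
The plan is to sharpen the parallelogram analysis of the proof of Theorem \ref{main2} into a quantitative lower bound on the bias. I would consider the same family of parallelograms $P(\varphi,\psi,s_1,s_2)=[0,s_1v_1]\oplus[0,s_2v_2]$ as in \eqref{parallel}, with $v_1,v_2$ chosen so that both lie in a single connected component $E$ of $S^1\setminus D_\La$, making every vertex $n$-critical. Corollary \ref{2Dvol} then gives
\begin{equation*}
E\hat{V}_0(P(\varphi,\psi,s_1,s_2))= a^{-1}\bigl(s_1 A_1(a,\varphi)+s_2 A_2(a,\varphi+\psi)\bigr)+R(a,\varphi,\psi),
\end{equation*}
where $R$ is bounded in $s_1,s_2$ and $A_i$ depends only on the weights and the two normal directions.

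Assume for contradiction that the worst case relative asymptotic bias is strictly less than $1$. Since $V_0\equiv 1$ on $\mathcal{P}^2$, the limit $\lim_{a\to 0}E\hat{V}_0(P)$ must exist and lie in $(0,2)$ for every such parallelogram, and in particular must remain uniformly bounded as $s_1,s_2$ range over $(0,\infty)$. This forces $a^{-1}A_i(a,\cdot)\to 0$ for $i=1,2$, since otherwise letting $s_i\to\infty$ produces arbitrarily large bias. With this cancellation the remaining computation proceeds exactly as in the proof of Theorem \ref{main2}, using reflection invariance to pair up the $\csc\psi$-terms, and yields
\begin{equation*}
\lim_{a\to 0} E\hat{V}_0(P(\varphi,\psi,s_1,s_2))=g(\varphi,\psi)=c_1 f_1(\varphi,\psi)+c_2 f_2(\varphi,\psi)+c_3 f_3(\varphi,\psi)
\end{equation*}
for suitable constants $c_1,c_2,c_3$, with $f_1,f_2,f_3$ the functions appearing in \eqref{functions}; note that $g$ is independent of $s_1,s_2$.

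I would then decompose $g(\varphi,\psi)=p(\varphi)\cot\psi+r(\varphi)$ with $p(\varphi)=c_1\cos^2\varphi+c_2\sin^2\varphi+c_3\sin\varphi\cos\varphi$ and $r$ bounded in $\psi$. If $p$ does not vanish identically on the $\varphi$-range determined by $E$, I pick $\varphi_0$ with $p(\varphi_0)\ne 0$ and let $\psi\to 0^+$ while keeping $\varphi_0,\varphi_0+\psi$ inside $E$; then $|g(\varphi_0,\psi)|\to\infty$, contradicting $g\in(0,2)$. If instead $p\equiv 0$ on an open subinterval of $E$, then linear independence of $1,\cos(2\varphi),\sin(2\varphi)$ forces $c_1=c_2=c_3=0$, so $g\equiv 0$, and since $V_0(P)=1$ every such parallelogram already realizes relative bias exactly $1$, again contradicting the assumption.

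The main obstacle is the first step: running the formal identities of the proof of Theorem \ref{main2} without being able to invoke Corollary \ref{weights} to reduce to homogeneous weights, since here I only have boundedness of the limit rather than unbiasedness. The workaround is to exploit the freedom in $(s_1,s_2)$ to force the $a^{-1}$-coefficient to vanish in the limit, after which the algebraic manipulation of Theorem \ref{main2} carries over verbatim and the asymptotic analysis of the three-term combination $g$ as $\psi\to 0^+$ is elementary.
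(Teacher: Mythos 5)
Your proposal is correct and takes essentially the same route as the paper's proof: the same family of parallelograms \eqref{parallel} with $n$-critical vertices, reduction of $\lim_{a\to 0}E\hat{V}_0$ to a linear combination of the three functions in \eqref{functions}, and then either blow-up as $\psi\to 0^+$ when the combination is non-trivial or relative bias exactly $1$ when it vanishes. The only difference is in bookkeeping, and it is a sound one: where the paper simply cites the proof of Theorem \ref{main2} (which there invoked Corollary \ref{weights} under the unbiasedness assumption), you force the $a^{-1}$-coefficient to vanish directly from boundedness of the limit uniformly in $(s_1,s_2)$, so the reduction works with general, not necessarily homogeneous, weights.
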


\begin{proof}
Let $P(\varphi,\psi)$ be as in the proof of Theorem \ref{main2} for $d=2$. The proof shows that $\lim_{a\to 0}E\hat{V}_{0}(P(\varphi,\psi))$ has the form
\begin{align}
\begin{split}\label{aer}
\alpha_1{}&(\cos^2 \varphi \cot\psi - \sin\varphi \cos\varphi ) \\
&+\alpha_2(\sin^2 \varphi \cot\psi + \sin\varphi \cos \varphi)\\
 &+ \alpha_3(  \sin \varphi \cos \varphi \cot \psi +
\cos^2 \varphi -\sin^2 \varphi)
\end{split}
\end{align}
for some $\alpha_1,\alpha_2,\alpha_3\in \R$ and all $(\varphi,\psi)\in I\times (0,\eps)\subseteq U$ for   some small open interval $I$ and some $\eps>0$.

The functions $\cos^2\varphi$, $\sin^2\varphi$, and $\sin\varphi \cos \varphi$ are li\-ne\-ar\-ly independent, so if \eqref{aer} is non-trivial, there must be a $\varphi\in I$ such that 
\begin{equation*}
\lim_{\psi \to 0}\lim_{a\to 0}E\hat{V}_{0}(P(\varphi,\psi)) = \pm \infty.
\end{equation*}
\qed
\end{proof}

Note how the fact that $P(\varphi,\psi)$ had an $n$-critical vertex was essential in the proof.
The next proposition shows that the polygons with $n$-critical vertices are the only sets in $\mathcal{P}^2$ where the estimation of $V_0$ fails. To get a slightly more general result, we first extend the definition of an $n$-critical vertex to the class $\mathcal{K}^2$ of compact convex sets with non-empty interior.

\begin{definition}
Let $K\in \mathcal{K}^2$. We say that $x\in \partial K$ is an $n$-critical boundary point if for all $a>0$, 
\begin{equation*}
(K-x)\cap aC_{-n(\xi_1+\xi_2),0}^{2n}=\{0\}.
\end{equation*}
\end{definition}
Note that $K$ can have at most finitely many $n$-critical boundary points.

\begin{lemma}\label{convexity}
Let $K\in \mathcal{K}^2$ have no $n$-critical boundary points. Then there exists a $\delta >0$ such that  whenever $a<\delta$,
\begin{equation}\label{convexnc}
(K-x)\cap aC_{-n(\xi_1+\xi_2),0}^{2n}\neq\{0\}
\end{equation}
for all $x\in \partial K$.
\end{lemma}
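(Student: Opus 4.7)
I would argue by contradiction. Writing $C := C_{-n(\xi_1+\xi_2),0}^{2n}$ for brevity, if no such $\delta$ exists then there are sequences $a_k \to 0^+$ and $x_k \in \partial K$ with $(K - x_k) \cap a_k C = \{0\}$. By compactness of $\partial K$ we may pass to a subsequence with $x_k \to x^* \in \partial K$. Since $x^*$ is not $n$-critical, choose $y \in C \setminus \{0\}$ and $a_0 > 0$ with $x^* + a_0 y \in K$; by convexity, the entire segment $[x^*, x^* + a_0 y]$ lies in $K$. The plan is to establish a uniform lower bound $a^*(x,y) \geq a_0/2$ for all $x \in K$ sufficiently close to $x^*$, where $a^*(x,y) := \sup\{a \geq 0 : x + ay \in K\}$. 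Once this is in hand, for $k$ large we have $|x_k - x^*|$ small and $a_k < a_0/2$, so $x_k + a_k y \in K$, i.e.\ $a_k y \in (K-x_k) \cap a_k C$ with $a_k y \neq 0$, contradicting $(K - x_k) \cap a_k C = \{0\}$.

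To prove the lower bound I would set $q = x^* + (a_0/2)y \in K$ and split into two cases depending on whether $q \in \indre K$ or $q \in \partial K$. If $q \in \indre K$, there is $r > 0$ with $B(q,r) \subseteq K$; for any $x \in K$ with $|x - x^*| < r$, the identity $x + (a_0/2)y = q + (x - x^*)$ places this point in $B(q,r) \subseteq K$, giving $a^*(x,y) \geq a_0/2$.

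If instead $q \in \partial K$, then by the standard convexity fact (any interior point on the open segment $(x^*, x^* + a_0 y)$ would force the whole open segment into $\indre K$, including the midpoint $q$), the segment $[x^*, x^* + a_0 y]$ lies entirely in $\partial K$ and therefore in a single flat edge $E$ of $K$ with $y$ parallel to $E$. The midpoint $q$ is then a relative-interior point of $E$, so there is $r > 0$ with $K \cap B(q,r) = B(q,r) \cap H$, where $H = \{z : (z - q) \cdot n \leq 0\}$ is the closed supporting half-plane at $E$ with outward unit normal $n$. Since $E$ is a supporting edge at $x^*$ too, $K \subseteq \{z : (z - x^*) \cdot n \leq 0\}$, and every $x \in K$ satisfies $(x - x^*) \cdot n \leq 0$. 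For $|x - x^*| < r$ this places $q + (x - x^*) \in B(q,r) \cap H = K \cap B(q,r) \subseteq K$, again yielding $a^*(x,y) \geq a_0/2$.

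The step I expect to be the main obstacle is verifying, in the boundary case, that $K$ really coincides with a half-ball near $q$: one must pick $r$ smaller than both the distance from $q$ to the endpoints of $E$ and the distance from $q$ to any other component of $\partial K$, and then use that $y$ lying along $E$ forces the supporting normal at $x^*$ to agree with the one at $q$. These facts follow from the one-dimensional structure of $\partial K$ in a small neighbourhood of a relative-interior point of a flat face of a two-dimensional convex body, but assembling them precisely is the one delicate point.
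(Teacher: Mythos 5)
Your proof is correct and is essentially the paper's argument: non-criticality gives a segment $[x^*,x^*+a_0y]\subseteq K$, the half-way point $x^*+\tfrac{a_0}{2}y$ remains in $K$ under small perturbations of the base point, and compactness of $\partial K$ (you via a convergent subsequence, the paper via a finite subcover with $a=\min \tfrac12 a(x)$) yields the uniform $\delta$. The only difference is that you spell out, in the two cases $q\in\indre K$ and $q\in\partial K$, the perturbation-stability step that the paper asserts without proof, and your treatment of the boundary case (segment lies in a flat edge, local half-plane structure at a relative interior point of that edge) is sound.
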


\begin{proof}
Let $x\in \partial K$. Then there is an $a(x)> 0$ depending on $x$ such that $[x,x+a(x)c]\subseteq K$ for some $c\in C_{-n(\xi_1+\xi_2),0}^{2n}$. There is an open neighborhood $U_x$ of $x$ in $\partial K$ such that $y+\frac{1}{2}a(x)c\in K$ for all $y\in U_x$.  Cover $\partial K$ by finitely many such $U_x$ and choose $a$ to be the smallest of the corresponding $\frac{1}{2}a(x)$. \qed
\end{proof}

Let $(B_l^n,W_l^n)$ be a configuration. Define the corresponding weight 
\begin{equation} \label{weight}
w_l=\sum_{k=1}^{n^2} (-1)^k\frac{1}{k}n_l^{k-1}
\end{equation}
where $n_l^k$ is the number 
\begin{equation*}
n_l^k=\bigg\vert\bigg\{S\subseteq \La\backslash \{0\} \bigg\vert \vert S \vert =k, B_l \cap \bigcap_{z\in S} C_{z,0}^n  \neq \emptyset \bigg\}\bigg\vert.
\end{equation*}

\begin{proposition}\label{nalg}
Let $\hat{V}_0^n$ be the local algorithm based on $n \times n$ configurations with weights given by \eqref{weight}. For all $K\in \mathcal{K}^2$ with no $n$-critical boundary points, $\hat{V}_0^n(K)=1$ whenever $a$ is sufficiently small.
\end{proposition}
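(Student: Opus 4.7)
The strategy is to reinterpret $\hat V_0^n(K)$ as the Euler characteristic of a certain simplicial complex built from $K \cap a\La_c$, and then to use the convexity of $K$ together with the no-$n$-critical-boundary-points hypothesis to conclude that this Euler characteristic equals $1$.

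First, I would substitute the weight formula into $\hat V_0^n(K) = \sum_l w_l N_l = \sum_z w_{l(z)}$ and interchange the orders of summation. For each $k$, the quantity $\sum_z n_{l(z)}^{k-1}$ counts pairs $(z, S)$ with $|S| = k-1$ for which there exists $x \in C_{0,0}^n$ satisfying $a(z+x+c) \in K$ and $x - y \in C_{0,0}^n$ for every $y \in S$. Reparameterizing by $p = a(z+x+c) \in K \cap a\La_c$ and $T = \{z\} \cup (z+S) \subseteq \La$, this condition becomes $T \subseteq p/a - c + \check C_{0,0}^n$, and every such $T$ of size $k$ arises from exactly $k$ ordered choices of distinguished element $z \in T$. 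The factors $\tfrac{1}{k}$ cancel and one obtains
\begin{equation*}
\hat V_0^n(K) \;=\; \sum_{k=1}^{n^2}(-1)^{k-1} M_k(K) \;=\; \chi(\mathcal{N}^*),
\end{equation*}
where $M_k(K) = |\{T \subseteq \La : |T|=k,\ \exists p \in K \cap a\La_c: T \subseteq p/a - c + \check C_{0,0}^n\}|$ and $\mathcal{N}^*$ is the simplicial complex on $\La$ whose simplices are exactly such $T$'s. Geometrically, $\mathcal{N}^* = \bigcup_{p \in K \cap a\La_c}\Delta_p$, with $\Delta_p$ the full $(n^2-1)$-simplex on vertex set $p/a - c + \check C_{0,0}^n$.

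Second, I would apply the Nerve Lemma twice. The cover of $\mathcal{N}^*$ by the contractible simplices $\Delta_p$ is good, because every non-empty intersection $\Delta_{p_1} \cap \dotsm \cap \Delta_{p_m}$ is the full simplex on $\bigcap_i(p_i/a - c + \check C_{0,0}^n)$, hence again contractible. Therefore $\mathcal{N}^*$ is homotopy equivalent to its nerve $\mathcal{C}$, the simplicial complex on vertex set $K \cap a\La_c$ with simplex $\{p_1, \dots, p_m\}$ precisely when these points lie in a common $n \times n$ lattice cell. In turn, $\mathcal{C}$ is the nerve of the convex cover $\{D_p\}_{p \in K \cap a\La_c}$ of $U := \bigcup_p D_p \subseteq \R^2$, where $D_p := p + a\check C_0^n$ is the continuous $n \times n$ box with $p$ at its upper corner; indeed $D_{p_1} \cap \dotsm \cap D_{p_m} \neq \emptyset$ iff $p_1, \dots, p_m$ lie in a common lattice cell. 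All intersections of $D_p$'s are convex, so by the Nerve Lemma $\chi(\mathcal{N}^*) = \chi(\mathcal{C}) = \chi(U)$.

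Third, it remains to show $U$ is contractible. Fix any interior lattice point $p_0 \in K \cap a\La_c$ (which exists for $a$ small since $K$ has non-empty interior). By Lemma~\ref{convexity} combined with the no-$n$-critical-boundary-points assumption, for $a$ small every boundary point of $K$ has another point of $K$ within the $a$-scaled $2n \times 2n$ lattice window around it. Combining this density with the convexity of $K$, one can show that every $p \in K \cap a\La_c$ is joined to $p_0$ by a lattice path $p = q_0, q_1, \dots, q_m = p_0$ inside $K \cap a\La_c$ whose successive members share a common $n \times n$ cell. Since consecutive $D_{q_i}$'s then overlap, sliding $D_p$ along the chain of overlaps inside $U$ gives a deformation retraction of $U$ onto the contractible set $D_{p_0}$. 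Hence $\chi(U) = 1$ and $\hat V_0^n(K) = 1 = V_0(K)$.

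The main obstacle is the third step: one must exploit the no-$n$-critical-boundary-points hypothesis (through Lemma~\ref{convexity}) carefully enough to exclude pathological geometries --- in particular, situations where $K$ degenerates into a thin sliver missing lattice points and thereby producing a disconnected or non-contractible $U$. The first two steps are essentially bookkeeping: cancellation of the $\tfrac{1}{k}$ factors followed by two routine applications of the Nerve Lemma.
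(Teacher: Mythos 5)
Your reduction of $\hat V_0^n(K)$ to the Euler characteristic of the union $U$ of occupied boxes is essentially correct and is a genuinely different first half from the paper's: the cancellation of the $\tfrac1k$ factors plus the two nerve-lemma applications replaces the paper's inclusion--exclusion for $V_0(\hat K)$ with $\hat K=\bigcup_z\conv(C^n_{z,0}\cap K)$, and in particular bypasses the Helly-type lemma the paper has to prove (that $P_{z_1}\cap\dots\cap P_{z_k}\neq\emptyset$ forces $C^n_{z_1,0}\cap\dots\cap C^n_{z_k,0}\cap K\neq\emptyset$). Two smaller points deserve a word: the sign in \eqref{weight} is $(-1)^k$, so as printed the $k=1$ term contributes $-1$ per occupied cell; you have silently used $(-1)^{k-1}$, which is also what the paper's own inclusion--exclusion display implicitly does. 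And your second nerve application is to a cover by \emph{half-open} convex boxes -- the half-openness is exactly what makes ``$D_{p_1}\cap\dots\cap D_{p_m}\neq\emptyset$ iff the $p_i$ share a lattice cell'' true (with closed boxes it fails), so you must either pass to slightly shrunken open boxes with the same nerve or invoke a nerve theorem valid for finite covers by arbitrary convex sets; fixable, but not automatic.

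The genuine gap is your third step. Path-connectivity of $U$ via chains of overlapping boxes (which does use non-$n$-criticality through Lemma \ref{convexity} and parallels the paper's proof that $\hat K$ is connected) is only half of what is needed: $\chi(U)=1$ also requires that $U$ has no holes, and ``sliding $D_p$ along the chain of overlaps gives a deformation retraction onto $D_{p_0}$'' is not a valid argument -- a ring-shaped arrangement of boxes, each chained to a base box through overlaps, satisfies the same hypothesis and has $\chi=0$. What excludes holes is the convexity of $K$, and converting that into a proof is precisely the hardest part of the paper's argument: the case analysis showing that $\R^2\setminus\hat K$ is connected (the vertical line $l$, the points $b_1,b_2$ with $[b_1,b_2]\cap\hat K=\{b_1,b_2\}$, the segments $[x_i,y_i]\subseteq\partial P_{z_i}$ and the slope discussion). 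Your proposal contains no analogue of this, and your closing remark misplaces the difficulty: the danger is not a ``thin sliver'' producing a disconnected $U$ (the chain argument handles that) but an unfilled hole, i.e.\ failure of simple connectivity. Until you prove that $\R^2\setminus U$ is connected (equivalently $H_1(U)=0$), the argument is incomplete at its crux.
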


The idea is to approximate $K$ by a polyconvex set.
Let $P_z=\conv(C_{z,0}^n\cap K)$ be the convex hull of $C_{z,0}^n\cap K$ and define the approximation
\begin{equation*}
\hat{K}=\bigcup_{z\in \Z^2} P_z.
\end{equation*}
Then the proof will show that $V_0(K)=V_0(\hat{K})$ and that $\hat{V}_0^n(K)=V_0(\hat{K})$.

\begin{proof}
Let $K\in \mathcal{K}^2$ with no $n$-critical boundary points be given. For simplicity, assume $\La = \Z^2$. The general case follows by considering a linear map $L:\R^2 \to \R^2$ with $L(\La)=\Z^2$. Then $K$ has an $n$-critical vertex for $\La$ if and only if $L(K)$ has an $n$-critical vertex with respect to $\Z^2$.  Moreover, $N_l(K\cap \La)=N_l(L(K)\cap \Z^2)$ and thus $E\hat{V}_0^{a\La}(K)=E\hat{V}_0^{a\Z^2}(L(K))$.
Choose $a$ so small that \eqref{convexnc} is satisfied and such that $K$ contains a ball of radius $\sqrt{2}(n+1)a$. By possibly considering $a^{-1}K$ instead of $K$, we may assume that $a=1$ to keep notation simple. 

We first claim that 
\begin{equation*}
V_0(\hat{K})=V_0(K)=1.
\end{equation*}
For this, it is enough to show that $\hat{K}$ and $\R^2\backslash \hat{K}$ are both connected.

In order to show that $\hat{K}$ is connected, we show that every $x=(x_1,x_2) \in \hat{K}\cap \Z^2$ is connected by a path in $\hat{K}$ to a fixed reference point $y=(y_1,y_2)\in \hat{K}\cap \Z^2$ with $y + B(\sqrt{2}n)\subseteq K$. We may assume that $x_1\leq y_1$ and $x_2\leq y_2$ such that $C_{0}^{n}\cap (K-x) \neq \{0\}$. Then $C_{0,0}^{n}\cap (K-x)$ must contain a point $z\neq x$. 
To see this, choose $p\in \partial K $ with $x\in [p,y]$. Then $C_{0,0}^n\cap (K-p)$ contains a point $z\neq 0$ since $p$ is not $n$-critical. Since $p+z,y+z\in K$, also $x+z\in K$ by convexity. Thus $x$ is connected to $x+z$ in $\hat{K}$, and the claim follows by induction on $|x_1-y_1|+|x_2-y_2|$.

To show that $\R^2\backslash \hat{K}$ is connected, assume for contradiction that $x\in K\backslash \hat{K}$ is contained in a compact component. Let $l$ be the vertical line through $x$. Let $b_1,b_2\in \hat{K}\cap l$ be such that 
\begin{equation}\label{bs}
[b_1,b_2]\cap \hat{K} =\{b_1,b_2\},
\end{equation}
$x\in [b_1,b_2]$, and the vector $b_2-b_1$ points upwards.   Then for $i=1,2$ there are line segments $[x_i,y_i]\subseteq \partial P_{z_i}$ with $x_i,y_i,z_i\in \Z^2$ such that $b_i\in [x_i,y_i]$.
 After possibly reflecting the picture in the coordinate axes, we may assume:
\begin{align*}
&\aff[x_1,y_1] \cap \aff[x_2,y_2] \subseteq H^-_{-e_1,-\langle x,e_1\rangle},\\
&x_1,x_2 \in H^-_{e_1,\langle x,e_1\rangle},\\
&y_1,y_2 \in H^-_{-e_1,-\langle x,e_1\rangle},\\
&\langle x_1,e_1 \rangle \leq \langle x_2,e_1 \rangle.
\end{align*}

First observe that the vertical distance from $x_2$ to $[x_1,y_1]$ is at most $1$. Assume this were not true. If $[x_1,y_1]$ has positive slope, either $x_2=y_1+(0,m)\in l$ for some $m\in \N$, implying $x\in [x_2,y_1]\subseteq \hat{K}$, or there is an $m\in \N$ such that $x_2-(0,m)$ lies above $[x_1,y_1]$ and $\conv(x_2-(0,m),x_1,y_1) \subseteq P_{z}$ for some $z\in \Z^2$. If $[x_1,y_1]$  has non-positive slope, so must $[x_2,y_2]$ and hence $\conv(x_2,x_2-(0,1),y_2) \subseteq P_{z}$ for some $z\in \Z^2$. All three cases contradict the assumption \eqref{bs}.
 
This implies that either $\conv(x_1,y_1,x_2)\subseteq P_z$ for some $z\in \La$, or $x_2=y_1+(0,1)$, or $[x_1,y_1]$ has negative slope and $x_2=x_{1}+(0,1)$.  The second case implies $[x_2,y_1]\subseteq \hat{K}\cap l$. In the third case, $[x_2,y_2]$ must also have negative slope and hence $\conv(x_2-(0,1),x_2,y_2)\subseteq  P_z$ for some $z\in \La$. Again, all three cases contradict the assumption \eqref{bs}. 
 
The proof is now complete if we can show that 
\begin{equation*}
\hat{V}_0^n(K\cap \Z^2)=V_0(\hat{K}).
\end{equation*}
By the inclusion-exclusion principle,
\begin{align*}
V_0(\hat{K})&=\sum_{k=1}^{n^2} \sum_{\substack{S\subseteq \La,\\ \vert S \vert = k}} (-1)^kV_0\bigg(\bigcap_{z\in S} {P}_z\bigg)\\
&=\sum_{k=1}^{n^2}\sum_{z_0\in \La} \sum_{\substack{S\subseteq \La\backslash \{z_0\},\\ \vert S \vert = k-1}} (-1)^k\frac{1}{k}V_0\bigg(P_{z_0} \cap \bigcap_{z\in S} {P}_{z}\bigg)\\
&=\sum_{k=1}^{n^2}(-1)^k\frac{1}{k}\sum_{z_0\in \La} \sum_{\substack{S\subseteq \La\backslash \{z_0\},\\ \vert S \vert = k-1}} \mathds{1}_{\{P_{z_0} \cap \bigcap_{z\in S} {P}_{z}\neq \emptyset\}}.
\end{align*}
 
On the other hand, the weights are constructed such that
\begin{align*}
\hat{V}_0{}&(K)= \sum_{z_0\in \La }\sum_{l=1}^{2^{n^2}-1} w_l \mathds{1}_{\{C_{z_0,0}^n\cap K= z_0+ B_l\}}\\
&= \sum_{z_0\in \La }\sum_{l=1}^{2^{n^2}-1} \sum_{k=1}^{n^2} (-1)^k\frac{1}{k}n_l^{k-1} \mathds{1}_{\{C_{z_0,0}^n\cap K= z_0+ B_l\}}\\
&= \sum_{z_0\in \La }\sum_{k=1}^{n^2} (-1)^k\frac{1}{k} \sum_{\substack{S \subseteq \La\backslash \{z_0\},\\ |S|=k-1 }} \mathds{1}_{\{\bigcap_{z\in S} C_{z,0}^n \cap (C_{z_0,0}^n\cap K) \neq \emptyset\}}
\end{align*}
so it remains to show that if $P_{z_1}\cap \dots \cap P_{z_k}\neq \emptyset$, then $C_{z_1,0}^n\cap \dots \cap C_{z_k,0}^n \cap K \neq \emptyset$.

For $k=1$ this is trivial. Assume $P_{z_1}\cap P_{z_2}\neq \emptyset$. If $P_{z_1} \subseteq P_{z_2}$, then the claim is clearly true. Otherwise, $\partial P_{z_1}\cap \partial P_{z_2}\neq \emptyset$. Hence there are $x_i,y_i\in C_{z_i,0}^n\cap K$ such that the line segments $[x_1,y_1]$ and $[x_2,y_2]$ intersect in $C_{z_1}^n\cap C_{z_2}^n$. Assume $x_1,y_1,x_2,y_2 \notin  C_{z_1}^n\cap C_{z_2}^n$. Then $[x_1,y_1]$ divides $C_{z_2}^n$ into two components $C^1$ and $C^2$ with $C^1\subseteq C_{z_1}^n\cap C_{z_2}^n$. As $[x_2,y_2]$ intersects $[x_1,y_1]\cap C_{z_2}^n $, either $x_2$ or $y_2$ must belong to $C^1\cup [x_1,y_1]\subseteq C_{z_1}^n$, which is a contradiction.

For $k\geq 3$, assume that $z_1$ and $z_2$ have the smallest and largest 1st coordinate among the $z_i$, respectively. By the above, there is a $y_1\in C_{z_1,0}^n\cap C_{z_2,0}^n\cap K$. If $y_1 $ lies in $ C_{z_1,0}^n\cap \dots \cap C_{z_k,0}^n$, we are done. Otherwise, suppose that the 2nd coordinate is too large for $y_1$ to belong to $C_{z_1,0}^n\cap \dots \cap C_{z_k,0}^n$. Let $z_3$ have the smallest 2nd coordinate among the $z_i$. There are points 
\begin{align*}
&y_2\in C_{z_1,0}^n\cap C_{z_3,0}^n\cap \dots \cap C_{z_k,0}^n\cap K,\\
&y_3\in C_{z_2,0}^n\cap C_{z_3,0}^n\cap \dots \cap C_{z_k,0}^n\cap K,
\end{align*}
 by induction. If $ y_2, y_3 \notin C_{z_1}^n\cap \dots \cap C_{z_k}^n $, write $y_i=(r_i,s_i)$ with $r_2< r_1 <r_3$. We may assume $s_1> s_2 \geq s_3$. Then $(r_1,s_2)\in \conv(y_1,y_2,y_3)\subseteq K$  and thus 
\begin{equation*}
(r_1,s_2)\in C_{z_1,0}^n\cap \dots \cap C_{z_k,0}^n\cap K. 
\end{equation*} 
\qed
\end{proof}

\begin{example}
For $n=2$ and $\La=\Z^2$, $\hat{V}_0^2$ is the algorithm suggested by Pavlidis in \cite{pavlidis}, which is multigrid convergent on the class of $r$-regular sets. Theorem \ref{nalg} shows that this algorithm is also multigrid convergent on the class of compact convex polygons with no interior angles of less than 45 degrees. 
\end{example}

\subsection{The Euler characteristic in higher dimensions}\label{EChigh}

The results of the previous sections allow us to ge\-ne\-ra\-lize the 2D case of Theorem \ref{main2}  and Corollary \ref{worst} to higher dimensions.

\begin{theorem}
For $d\geq 2$ and $q\leq d-2$, any local algorithm $\hat{V}_q$ for which $\lim_{a\to 0}E\hat{V}_q(P)$ exists for all $P\in \mathcal{P}^d$ has a worst case asymptotic relative bias of at least 100\% on $\mathcal{P}^d$. In particular, Theorem \ref{main2} holds.
\end{theorem}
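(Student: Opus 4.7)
The plan is to follow the prism reduction to the 2D case from~\cite{kampf2}, as sketched in the discussion preceding Theorem~\ref{main3}. The base case $d=2$, $q=0$ is Corollary~\ref{worst}, so I may assume $d\geq 3$. Fix a local algorithm $\hat V_q$ in $\R^d$ based on $n\times\dots\times n$ configurations with weights $w_l(a)$, for which $\lim_{a\to 0}E\hat V_q(P)$ exists on $\mathcal P^d$. Writing $\Pi=\spa(\xi_1,\xi_2)$, I would take the parallelograms $P(\varphi,\psi)\subset\Pi$ from Section~\ref{euler} and form the prism
\begin{equation*}
Q(\varphi,\psi) = P(\varphi,\psi)\times R,\qquad R=\bigoplus_{i=3}^{d}[0,\xi_i].
\end{equation*}
By the product formula for intrinsic volumes, $V_q(Q(\varphi,\psi))$ is a polynomial in $V_0(P),\,V_1(P),\,V_2(P)$ with coefficients depending only on $R$, so for fixed $s_1,s_2$ it stays bounded and bounded away from $0$ as $\psi\to 0$.

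The main step is to analyse $\lim_{a\to 0}E\hat V_q(Q(\varphi,\psi))$ via Corollary~\ref{brik} and Lemma~\ref{volpiece}; by Corollary~\ref{weights} I may further assume the weights are polynomial in $a$. Every face of $Q$ factors as $F_P\times F_R$ with $F_P$ a face of $P(\varphi,\psi)$ and $F_R$ a face of $R$. For each vertex $v$ of $P(\varphi,\psi)$ with interior angle $\psi$, the $(d-2)$-face $v\times R$ of $Q$ contributes, via Lemma~\ref{volpiece}, terms whose coefficients carry a $\csc\psi$ factor, for the same plane-geometric reason as the corner terms in Corollary~\ref{2Dvol}. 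Requiring that these $\csc\psi$-contributions either cancel across the $(d-2)$-faces of $Q$ (for $q<d-2$, so that the divergent powers $a^{q-(d-2)}$ disappear and the limit exists) or match the bounded-in-$\psi$ contribution coming from $v\times R$ in $V_q(Q)$ (for $q=d-2$, so that the limit equals $V_q$) yields a trigonometric identity in $(\varphi,\psi)$. Aggregating the $d$-D weights by the projection of $d$-D configurations onto the 2D cell $C^{n,(2)}_{0,0}$, this identity reduces to a linear relation in the span of the three functions of~\eqref{functions} and the constant $1$. But the proof of Theorem~\ref{main2} for $d=2$ shows that no nontrivial such relation can vanish on an interval of $(\varphi,\psi)$, so either the limit fails to exist (contradicting the assumption) or it fails to equal $V_q(Q(\varphi,\psi))$; in either case the deviation behaves like $\csc\psi$ for some $\varphi$.

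It follows that for a suitable $\varphi$,
\begin{equation*}
\frac{|\lim_{a\to 0}E\hat V_q(Q(\varphi,\psi))-V_q(Q(\varphi,\psi))|}{V_q(Q(\varphi,\psi))}\longrightarrow\infty\qquad(\psi\to 0),
\end{equation*}
so the worst-case asymptotic relative bias on $\mathcal P^d$ is unbounded, hence $\geq 100\%$. The main obstacle is to show that the aggregation of the $d$-D weights under the projection $\La\to\La\cap\Pi$ does not accidentally kill the $\csc\psi$ signal through cancellations among $d$-D configurations projecting to the same 2D configuration, and that the combinatorial factor attached to $R$ in the contribution from $v\times R$ is nonzero; once this reduction to the 2D trigonometric linear independence from the proof of Corollary~\ref{worst} is set up, the conclusion follows from the 2D argument already carried out in Section~\ref{euler}.
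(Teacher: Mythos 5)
Your overall route is the same as the paper's: form a prism over the parallelogram $P(\varphi,\psi)$ of \eqref{parallel}, aggregate the $d$-dimensional weights into $2$D weights via the projection onto $\lin\{e_1,e_2\}$, and play the resulting $2$D estimator for $V_0$ against the trigonometric independence of \eqref{functions} from Section \ref{euler}. However, there is a genuine gap exactly at the point you yourself flag as ``the main obstacle'', and it cannot be argued away: the aggregation may indeed kill the $\csc\psi$ signal. Your conclusion that ``in either case the deviation behaves like $\csc\psi$ for some $\varphi$'' is false in that situation. For a concrete counterexample to your dichotomy, take all weights equal to zero: the limit exists for every $P\in\mathcal{P}^d$, nothing blows up as $\psi\to 0$, and yet the theorem must still produce a $100\%$ worst-case bias. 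More generally, whenever the aggregated estimator $\hat{V}_0'$ satisfies $\lim_{a\to 0}E\hat{V}_0'(P(\varphi,\psi))\equiv 0$, your mechanism yields neither a contradiction nor an unbounded bias, so your proof as written covers only the non-cancellation case. (This is also why the theorem claims $100\%$ rather than an unbounded bias.)

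The paper closes this case by keeping the prism edge lengths $t_3,\dots,t_d$ as free parameters rather than fixing $R=\bigoplus_{i=3}^d[0,\xi_i]$ with unit edges. After disposing of limits containing monomials of degree different from $q$ by scaling $Q$, and reducing to homogeneous weights as in Corollary \ref{weights}, the limit of $E\hat{V}_q(Q)$ decomposes so that the coefficient of each monomial $\prod_{i\in S}t_i$ with $|S|=q$ is exactly $\lim_{a\to 0}E\hat{V}_0'(P)$, whereas by \eqref{Vqbox} the corresponding coefficient of $V_q(Q)$ is $1$. Corollary \ref{worst} then gives the dichotomy: either $\lim_{a\to 0}E\hat{V}_0'(P)$ can be made arbitrarily large in absolute value (your $\csc\psi$ branch, giving arbitrarily large relative bias), or it vanishes identically, in which case the estimator misses the entire pure-$t$ part of $V_q(Q)$ and a suitable relative scaling of $t_3,\dots,t_d$ against $s_1,s_2$ drives the relative bias to $1$, i.e.\ $100\%$. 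By fixing the prism heights to $1$ you have discarded exactly the degrees of freedom needed for this second branch; restoring them, and adding the preliminary homogeneity reduction, is what is required to turn your sketch into a complete proof.
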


The proof uses the fact that if $P=\bigoplus_{i=1}^d [0,v_i]$ with $v_1,\dots,v_d \in \R^d$ linearly independent, then
\begin{equation}\label{Vqbox}
V_q(P)=\sum_{1\leq i_1<\dots < i_q\leq d} \Ha^q\bigg(\bigoplus_{s=1}^q [0,v_{i_s}]\bigg).
\end{equation}
This follows because 
\begin{equation*}
\sum_{S\subseteq \{1,\dots,d\}\backslash \{i_1,\dots , i_q\}} \gamma \bigg(\sum_{i\in S} v_{i}+\bigoplus_{s=1}^q [0,v_{i_s}],P\bigg)=1.
\end{equation*}

\begin{proof}
First consider the case of the standard lattice $\Z^d$. 
Take $Q=P\oplus \bigoplus_{j=3}^d [0,e_j]$ where $e_1,\dots,e_d\in \R^d$ is the standard basis and $P\subseteq \text{lin}\{e_1,e_2\}\cong \R^2$ is as in \eqref{parallel}. Let $M:\R^d \to \R^d$ be a linear map taking $P$ to $[0,s_1e_1]\oplus[0,s_2e_2]$ and fixing $e_3,\dots ,e_d$. Then
\begin{equation}\label{limtransf}
\lim_{a\to 0}E\hat{V}_q^{a\Z^d}(Q)=\lim_{a\to 0} E\hat{V}_q^{aM(\Z^d)}(M(Q)).
\end{equation}
The right hand side is a polynomial in $s_1,s_2,t_3,\dots,t_d$. 

On the other hand, \eqref{Vqbox} yields
\begin{align*}
V_q(Q)={}&\sum_{\substack{S\subseteq \{3,\dots,d\},\\ |S|=q}} \prod_{i\in S} t_i + \sum_{\substack{S\subseteq \{3,\dots,d\},\\ |S|=q-1}} (s_1+s_2)\prod_{i\in S} t_i\\
&+  \sum_{\substack{S\subseteq \{3,\dots,d\},\\ |S|=q-2}} s_1s_2\sin \psi\prod_{i\in S} t_i.
\end{align*}

If \eqref{limtransf} contains monomials in $s_1,s_2,t_3,\dots,t_d$ of degree larger than $q$ or if it contains monomials of degree less than $q$, we can make the relative bias arbitrarily large just by scaling $Q$ up or down, respectively.

Otherwise, \eqref{limtransf} is homogeneous in $s_1,s_2,t_3,\dots,t_d$ of degree $q$, and the argument in the proof of Corollary~\ref{weights} shows that the weights may be assumed to be homogeneous of degree $q$. Observing that 
\begin{equation*}
(-h(M(B_{l_i}\oplus \check{W}_{l_i}),\pm e_i))^+\in \{0,1\}
\end{equation*}
 for $i=3,\dots,d$ and writing $s_i=t_i$ for $i=1,2$,  the proof of Theorem \ref{main1'} shows that
\begin{align*}
\lim_{a\to 0}{}& E\hat{V}_q^{aM(\Z^d)}(M(Q))\\
={}& \sum_{l_1,\dots,l_{d-q}=1}^{2^{n^d}-1} w_{l_1,\dots,l_{d-q}} \\
&\times \prod_{i \in I^{L}}\sum_{\eps_i\in {\pm 1}}(-h(M(B_{l_i}\oplus \check{W}_{l_i}),\eps_ie_i))^+
 \prod_{i\notin I^{L}} t_i\\
={}&\lim_{a\to 0}E\hat{V}_{0}^{\prime a\Z^d}(P)\sum_{\substack{S\subseteq \{3,\dots,d\},\\ |S|=q}} \prod_{i\in S} t_i\\
&+ \sum_{\substack{S\subseteq \{3,\dots,d\},\\ |S|=q-1}}(\beta_S^1(Q)s_1+\beta_{S}^2(Q)s_2) \prod_{i\in S} t_i\\ 
&+ \sum_{\substack{S\subseteq \{3,\dots,d\},\\ |S|=q-2}}\beta_{S}^{12}(Q)s_1s_2\prod_{i\in S} t_i
\end{align*}
where $I^L$ is as defined just before the statement of Lemma~\ref{polvol}, $\beta_S^1(Q),\beta_S^2(Q),\beta_S^{12}(Q)$ are certain numbers depending only on $(\varphi,\psi)$ and the chosen weights, and $V_0'$ is the estimator for $V_0$ in $\R^2$ with weights
\begin{align*}
w_l'={}&\sum_{3\leq i_1 < \dots < i_{d-q-2}\leq d }\sum_{c_3,\dots,c_{d-q}=-n+1}^{n-2} \\
&w(\pi^{-1}(B_l)\cap B_{i_1,c_1} \cap \dots \cap B_{i_{d-q-2},c_{d-q-2}})
\end{align*}
where $\pi : C_{0,0}^n(\R^d)\to C_{0,0}^n(\R^2)$ is the projection induced by $\R^d \to \text{lin}\{e_1,e_2\}$ and $B_{j,c}=C_{0,0}^n \cap H^-_{e_j,c}$ for $c=0,\dots,n-2$, while $B_{j,c}=C_{0,0}^n \cap H^-_{-e_j,c}$ for $c=-n+1,\dots,-1$. 

By Corollary \ref{worst}, $\lim_{a\to 0}E\hat{V}_{0}'(P)$ is either zero or can be made arbitrarily large by properly choosing $P$. Thus, the asymptotic worst case error can be made arbitrarily close to one or arbitrarily large, respectively, by choosing  $P$ first and then choosing $t_3,\dots , t_d$ small compared to  $s_1$ and $s_2$.

Now consider a general lattice $\La$. Choose a linear map $M: \R^d \to \R^d$ such that $M(\Z^d)=\La$. Then
\begin{equation*}
E\hat{V}_q^{a\Z^d}(Q) = E\hat{V}_q^{a\La}(M(Q)),
\end{equation*}
while 
\begin{align*}
V_q(M(Q))={}&\sum_{\substack{S\subseteq \{3,\dots,d\},\\ |S|=q}} \alpha_S \prod_{i\in S} t_i\\
 &+ \sum_{\substack{S\subseteq \{3,\dots,d\},\\ |S|=q-1}}(\alpha_S^1s_1+\alpha_S^2s_2) \prod_{i\in S} t_i\\
 &+\sum_{\substack{S\subseteq \{3,\dots,d\},\\ |S|=q-2}}\alpha_S^{12} s_1s_2 t_i
\end{align*}
where $\alpha_S$ depends only on $M$ while $\alpha_S^1,\alpha_S^2,\alpha_S^{12}$ may also depend on $(\varphi,\psi)$. Thus the general case follows as before by first choosing $P$ and then choosing $s_1,s_2$.\qed
\end{proof}

\section{Local estimators on the class of $r$-regular sets}\label{rreg}
We now move on to local digital algorithms applied to $r$-regular sets. The formal definition of $r$-regular sets is as follows:

\begin{definition}\label{defrreg}
$X\subseteq \R^d$ is called $r$-regular if for every $x\in \partial X$ there exist two balls $B_1,B_2\subseteq \R^d$ both of radius $r$ and containing $x$ with $B_1\subseteq X$ and $\indre (B_2) \subseteq \R^d \backslash X$. For $x\in \partial X$, $n(x)$ denotes the unique outward pointing normal vector.
\end{definition}

The purpose of this section is to prove Theorem \ref{mainrreg}. As we only consider local estimators with homogeneous weights, we may assume $w_{2^{n^d}-1}=0$, see \cite[Section~3]{am2}. 
The case of the surface area $2V_{d-1}$ is an easy consequence of the corresponding theorem for polytopes and the following formula due to Kiderlen and Rataj \cite[Theorem 5]{rataj}:
\begin{theorem}[Kiderlen, Rataj]\label{KRthm}
For any local estimator  $\hat{V}_{d-1}$ with homogeneous weights and $w_{2^{n^d}-1}=0$ and for any compact $r$-regular set $X\subseteq \R^d$:
\begin{align*}
\lim_{a\to 0} {}&E\hat{V}_{d-1}(X)\\
={}& \det(\La)^{-1}\sum_{l=1}^{2^{n^d}-2} w_l \int_{\partial X} (-h(B_l\oplus \check{W}_l,n))^+ d\Ha^{d-1}.
\end{align*}
\end{theorem}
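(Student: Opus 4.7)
The plan is to start from the general formula for the expected value of a local estimator with homogeneous weights,
\begin{equation*}
E\hat{V}_{d-1}(X) = a^{-1}\det(\La)^{-1}\sum_{l=1}^{2^{n^d}-2} w_l\, \Ha^{d}\bigl(X\ominus a\check{B}_l \backslash X\oplus a\check{W}_l\bigr),
\end{equation*}
obtained from \eqref{intHM} and $w_l(a)=a^{d-1}w_l$, using $w_{2^{n^d}-1}=0$ to drop the last term. The task then reduces to proving that for each $l$,
\begin{equation*}
\lim_{a\to 0} a^{-1}\Ha^{d}\bigl(X\ominus a\check{B}_l \backslash X\oplus a\check{W}_l\bigr)
= \int_{\partial X} \bigl(-h(B_l\oplus \check{W}_l, n)\bigr)^{+}\, d\Ha^{d-1}.
\end{equation*}

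The key tool will be the tubular neighborhood of $\partial X$ guaranteed by $r$-regularity: for $a<r/2$ (say), every point $y$ at distance at most $\mathrm{diam}(C_{0,0}^n)\cdot a$ from $\partial X$ admits a unique foot $p(y)\in \partial X$, and the map $(p,t)\mapsto p+tn(p)$ is a bilipschitz diffeomorphism from $\partial X \times (-r,r)$ onto the tube of radius $r$ around $\partial X$, with Jacobian $J(p,t)=\prod_{i=1}^{d-1}(1-t\kappa_i(p))$ where $\kappa_i$ are principal curvatures bounded by $1/r$. Write each hit-or-miss set $X\ominus a\check{B}_l\backslash X\oplus a\check{W}_l$ in these normal coordinates, parametrizing $y=p-tn(p)$ with $t\in \R$.

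The heart of the argument is the following half-space approximation. Fix $l$ with $1\leq l \leq 2^{n^d}-2$, so both $B_l$ and $W_l$ are non-empty. For $p\in \partial X$ and $a$ small, $r$-regularity gives a two-sided osculating ball at $p$, so the cell $y+aC_{0,0}^n$ is contained in $X$ (resp.\ $\R^d\setminus X$) whenever $y+aC_{0,0}^n$ lies in the inner osculating ball (resp.\ outer). A direct geometric estimate shows $|y|\leq C a$ suffices, with the deviation between $X$ and its tangent half-space $H^-_{n(p),\langle p,n(p)\rangle}$ in a ball of radius $Ca$ at $p$ being $O(a^2)$. Therefore the symmetric difference between $X\ominus a\check{B}_l\backslash X\oplus a\check{W}_l$ and its analogue computed with $X$ replaced locally by the tangent half-space has $\Ha^d$-measure $o(a)$, after integrating the $O(a^2)$-thick error set against the $O(1)$ tangential area.

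For the tangent half-space, the condition $y+aB_l\subseteq H^-_{n,\langle p,n\rangle}$ and $y+aW_l\subseteq \R^d\setminus H^-_{n,\langle p,n\rangle}$ on $y=p-tn(p)$ translates directly, using the definition of support functions, to
\begin{equation*}
a\, h(B_l, n(p)) \leq t < -a\, h(\check{W}_l, n(p)),
\end{equation*}
an interval of length $a(-h(B_l\oplus \check{W}_l, n(p)))^+$. Integrating against the Jacobian $J(p,t)=1+O(a)$ over $\partial X$ and $t$ in this interval yields a contribution
$$a\int_{\partial X}(-h(B_l\oplus\check{W}_l,n))^{+}d\Ha^{d-1} + O(a^2),$$
and dividing by $a$ gives the claimed limit. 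The main obstacle is controlling the half-space approximation uniformly along $\partial X$: this is where compactness of $\partial X$ combined with the uniform lower curvature bound from $r$-regularity is essential, and where one needs to be careful that the error is genuinely $o(a)$ after dividing by $a$. Summing over $l$ and multiplying by $\det(\La)^{-1}w_l$ completes the proof.
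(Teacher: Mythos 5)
Your argument is essentially sound, but note that the paper does not prove this statement at all: it is quoted verbatim as Theorem 5 of Kiderlen and Rataj \cite{rataj} (``On infinitesimal increase of volumes of morphological transforms''), and the paper only uses it as a black box in the proof of Theorem \ref{mainrreg}. So what you have produced is a self-contained special-case proof rather than an alternative to an argument in the text. Your route --- reduce via \eqref{intHM} and homogeneity to showing $\lim_{a\to 0} a^{-1}\Ha^{d}\bigl(X\ominus a\check{B}_l \backslash X\oplus a\check{W}_l\bigr)=\int_{\partial X}(-h(B_l\oplus\check{W}_l,n))^{+}d\Ha^{d-1}$ for each mixed configuration, then localize to the $O(a)$-tube around $\partial X$ (possible exactly because $B_l\neq\emptyset\neq W_l$), pass to normal coordinates with Jacobian $\prod_i(1-t\kappa_i(p))=1+O(a)$, and replace $X$ by the tangent half-space, where the $t$-interval has length exactly $a(-h(B_l\oplus\check{W}_l,n(p)))^{+}$ --- is correct for $r$-regular sets, and the half-space replacement error is indeed $O(a^{2})$ uniformly in $p$, since the two-sided ball condition with the fixed radius $r$ traps $\partial X$ within a slab of thickness $(Ca)^{2}/(2r)$ over any tangent disc of radius $Ca$; integrating this against the finite surface area gives a total error $O(a^{2})=o(a)$, as you claim. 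What the citation buys the paper is generality and brevity: the Kiderlen--Rataj theorem is proved in a broader framework (volumes of general morphological transforms, under weaker regularity than the $C^{1,1}$ structure you exploit), whereas your argument buys a transparent, elementary proof tailored to $r$-regular sets, at the cost of that generality and of having to verify the uniform curvature bounds and the a.e.\ differentiability of $n$ needed for the coarea computation.
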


\begin{proof}[Theorem \ref{mainrreg} for $V_{d-1}$]
Suppose $\hat{V}_{d-1}$ is given. By Corollary \ref{box}, we may choose $v_1,\dots, v_d\in \R^d$ orthogonal such that
\begin{equation*}
\lim_{a\to 0}E\hat{V}_{d-1}\bigg(\bigoplus_{i=1}^d [0,v_i]\bigg)\neq {V}_{d-1}\bigg(\bigoplus_{i=1}^d [0,v_i]\bigg).
\end{equation*}
Consider the $r$-regular set
\begin{equation*}
X(r)=B(r)\oplus \bigoplus_{i=1}^d [0,v_i].
\end{equation*}

Observe that
\begin{equation*}
\lim_{r\to 0} V_{d-1}(X(r)) = {V}_{d-1}\bigg(\bigoplus_{i=1}^d [0,t_iu_i]\bigg).
\end{equation*}
On the other hand, Theorem \ref{KRthm} yields
\begin{align*}
\lim_{a\to 0}{}&E\hat{V}_{d-1}(X(r))\\
 =&\det(\La)^{-1}\sum_{l=1}^{2^{n^d}-2} w_l \int_{\partial X(r)} (-h(B_l\oplus \check{W}_l,n))^+ d\Ha^{d-1}\\
 =& \lim_{a\to 0}E\hat{V}_{d-1}\left(\bigoplus_{i=1}^d [0,v_i]\right)\\
 & + \det(\La)^{-1}\sum_{l=1}^{2^{n^d}-2} w_l \int_{Y}(- h(B_l\oplus \check{W}_l,n))^+ d\Ha^{d-1}
\end{align*}
where
\begin{equation*}
Y= X \backslash \bigcup_{F\in \mathcal{F}_{d-1}(\bigoplus_{i=1}^d [0,v_i])} (F +ru_{i_1^F}).
\end{equation*}

Since each $h(B_l\oplus \check{W}_l,n)$ is bounded for $n\in S^{d-1}$ and $\lim_{r\to 0}\Ha^{d-1}(Y) = 0$, it follows that
\begin{equation*}
\lim_{r\to 0}\lim_{a\to 0} E\hat{V}_{d-1}(X(r)) = \lim_{a\to 0}E\hat{V}_{d-1}\bigg(\bigoplus_{i=1}^d [0,v_i]\bigg). 
\end{equation*}
In particular, 
\begin{equation*}
 \lim_{a\to 0}E\hat{V}_{d-1}(X(r)) \neq V_{d-1}(X(r))
\end{equation*}
when $r$ is sufficiently small.\qed
\end{proof}

It follows from the definition of $r$-regularity that the boundary of an $r$-regular set $X$ is a $C^1$ manifold. The normal vector field $n$ is almost everywhere dif\-fe\-ren\-tiable on $\partial X$, see \cite{federer}. In particular, the second fundamental form $\II_x $  is defined on the tangent space $T_x \partial X $ if $n$ is dif\-fe\-ren\-tiable at $x$. Define $Q_x$ to be the quadratic form  on $T_x \partial X \oplus \text{lin}\{n(x)\}=\R^d$ given by
\begin{equation*}
Q_x(\alpha, tn(x)) = -\II_x(\alpha) + \tr(\II_x)t^2.
\end{equation*}

For a finite set $S\subseteq \R^d$, define
\begin{align*}
\II^+_x(S)&= \max \{\II_x(s)\mid s\in S,\, h(S,n)=\langle s,n\rangle \}, \\
\II^-_x(S)&= \min \{\II_x(s)\mid s\in S,\, h(S,-n)=\langle s,-n\rangle \}.
\end{align*}
Here $\II_x(s)$ for $s\in \R^d $ means $\II_x(\pi_x(s))$ where $\pi_x$ is the projection onto $T_x\partial X$. If $s^\pm\in S$ are such that $\II^\pm_x(S)=\II^\pm_x(s^\pm)$, define
\begin{equation*}
Q^{\pm}_x(S)=Q_x(s^\pm).
\end{equation*}
The following formula is shown in \cite{am2}:
\begin{theorem} \label{Q}
For a local estimator $\hat{V}_{d-2}$ with homogeneous weights and $w_{2^{n^d}-1}=0$ and  an $r$-regular set $X$,
\begin{align*}
\lim_{a\to 0} {}&E\hat{V}_{d-2}(X) =\det(\La)^{-1}\frac{1}{2} \sum_{l=1}^{2^d-2}w_l \\ \nonumber
&\times \int_{\partial X} (Q^+(B_l)-Q^-(W_l))\delta_{l}(n)  \\
&- (\II^+(B_l)-\II^-(W_l))^+\mathds{1}_{\{h(B_l\oplus \check{W}_l,n)=0\}}d\Ha^{d-1}.
\end{align*}
\end{theorem}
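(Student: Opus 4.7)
My plan is to derive an asymptotic expansion of $\Ha^d(X\ominus a\check{B}_l\setminus X\oplus a\check{W}_l)$ up to $O(a^3)$ using the tubular neighborhood of $\partial X$ and a Taylor expansion of the signed distance function, then extract the $a^2$-coefficient after summing against the weights and dividing by $a^2$. Since $X$ is $r$-regular, the signed distance $d_X$ is $C^{1,1}$ on the tube $N_r$ of width $r$ about $\partial X$, and the map $\Phi(z,t)=z+tn(z)$ is a bi-Lipschitz diffeomorphism of $\partial X\times(-r,r)$ onto $N_r$ with Jacobian $J(z,t)=\prod_{i=1}^{d-1}(1+tk_i(z))$, where $k_i$ are the principal curvatures of $\partial X$. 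For $a$ small enough that each hit-or-miss transform $\mathrm{HM}_l(a):=X\ominus a\check{B}_l\setminus X\oplus a\check{W}_l$ lies inside $N_r$, the area formula reduces the problem to a fiber integral
$$\Ha^d(\mathrm{HM}_l(a))=\int_{\partial X}\int_{I_l(a,z)}J(z,t)\,dt\,d\Ha^{d-1}(z),$$
with $I_l(a,z)\subseteq(-r,r)$ the set of $t$ such that $z+tn(z)\in\mathrm{HM}_l(a)$.

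First I would expand $I_l(a,z)$ to second order in $a$. Using $\nabla d_X(y)=n(z)$ at $y=z+tn(z)$ together with the fact that $\nabla^2 d_X(z)$ equals $\II_z$ on $T_z\partial X$ and vanishes on the normal direction, one gets
$$d_X(z+tn(z)+as)=t+a\langle s,n(z)\rangle+\tfrac{a^2}{2}\II_z(s_T)+O(a^3),$$
uniformly in $z$ by $r$-regularity (the remainder depends only on $r$ and $\mathrm{diam}(B_l\cup W_l)$). The memberships $y+as\in X$ for $s\in B_l$ and $y+as\in X^c$ for $s\in W_l$ then translate to $t\in(t^-_l(a,z),t^+_l(a,z)]$, with $t^+_l$ (resp.\ $t^-_l$) obtained as a min (resp.\ max) of those linear-plus-quadratic-in-$a$ expressions over $B_l$ (resp.\ $W_l$). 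Performing the lex optimization—first in $\langle s,n\rangle$, then in $\II_z(s_T)$ over the resulting extremal face—gives exactly
$$t^+_l=-ah(B_l,n)-\tfrac{a^2}{2}\II^+_z(B_l)+O(a^3),\qquad t^-_l=ah(\check{W}_l,n)-\tfrac{a^2}{2}\II^-_z(W_l)+O(a^3),$$
which is precisely what the definitions of $\II^\pm_z$ in the paper encode.

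Second, substituting into the tube integral and Taylor-expanding $J(z,t)=1+t\tr(\II_z)+O(t^2)$ yields
$$\int_{t^-_l}^{t^+_l}J(z,t)\,dt=(t^+_l-t^-_l)^+ + \tfrac{\tr(\II_z)}{2}\bigl((t^+_l)^2-(t^-_l)^2\bigr)\mathds{1}_{t^+_l>t^-_l}+O(a^3).$$
On the generic set $\{\delta_l(n)=1\}$ the $O(a)$ piece is $-ah(B_l\oplus\check{W}_l,n)$, and the $O(a^2)$ coefficient combines the second-order part of $t^+_l-t^-_l$ with the $\tr(\II_z)$-correction from $(t^+_l)^2-(t^-_l)^2$. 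A direct computation then shows, via the identity $Q_z(s)=-\II_z(s_T)+\tr(\II_z)\langle s,n\rangle^2$, that this $a^2$-coefficient assembles into exactly $\tfrac{1}{2}(Q^+_z(B_l)-Q^-_z(W_l))$, reproducing the first term of the formula after division by $a^2$, integration over $\partial X$, and summation against $w_l$. The divergent $O(a^{-1})$ piece—proportional to $\sum_l w_l(-h(B_l\oplus\check{W}_l,n))^+$—must vanish for the limit to exist, which is an implicit consistency condition on the weights (the same one appearing in Kiderlen--Rataj for $V_{d-1}$).

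The main obstacle is the singular locus $\{z\in\partial X:h(B_l\oplus\check{W}_l,n(z))=0\}$, where the leading $O(a)$ length of $I_l(a,z)$ vanishes and whether the interval is nonempty is decided entirely by the $a^2$ terms. There $(t^+_l-t^-_l)^+$ is itself of order $a^2$, and after division by $a^2$ and integration it produces the stated indicator term involving $(\II^+(B_l)-\II^-(W_l))^+$. I would handle this by decomposing $\partial X$ according to $\operatorname{sgn} h(B_l\oplus\check{W}_l,n(\cdot))$, applying dominated convergence on the generic strata, and carefully treating the crossover strip $\{|h|=O(a)\}$ where the $O(a)$ and $O(a^2)$ terms compete and the sign of $t^+_l-t^-_l$ can flip. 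The uniform curvature bound $|k_i|\le 1/r$ and the Lipschitz continuity of $n$ on $\partial X$ are the key inputs controlling the Taylor remainders and showing that the crossover strip contributes $o(1)$ to $a^{-2}\Ha^d$; measurability of $\II^\pm_z,Q^\pm_z$ in $z$ follows from their expression as $\max/\min$ of finitely many continuous functions of $n(z)$ together with $\Ha^{d-1}$-a.e.\ existence of $\II_z$ on an $r$-regular set.
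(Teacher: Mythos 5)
You should first note that the paper does not prove Theorem \ref{Q} at all; it is quoted from \cite{am2} (``The following formula is shown in \cite{am2}''), so there is no in-paper proof to compare against. Your tube-coordinate strategy is the natural route and surely parallels the cited argument in outline: reduce $EN_l$ to $a^{-d}\det(\La)^{-1}\Ha^{d}(X\ominus a\check{B}_l\setminus X\oplus a\check{W}_l)$ via \eqref{intHM}, fibre the hit-or-miss transform over $\partial X$ by $\Phi(z,t)=z+tn(z)$, expand the fibre endpoints to second order, and identify the $a^2$-coefficient with $\tfrac12(Q^+-Q^-)$ off the singular locus; your remark that the $a^{-1}$-term must vanish for the limit to exist is also correct and matches how the theorem is invoked in Section 4 under asymptotic unbiasedness.

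There are, however, two concrete gaps. (i) Regularity: $d_X$ is only $C^{1,1}$ on the tube, so the remainder in $d_X(z+tn(z)+as)=t+a\langle s,n(z)\rangle+\tfrac{a^2}{2}\II_z(s_T)+\dots$ is not uniformly $O(a^3)$; it is $o(a^2)$ only at points of twice differentiability (a.e.\ $z$), with a uniform $O(a^2)$ bound from the Lipschitz gradient. Your dominated-convergence remark can absorb this, but the ``uniformly in $z$'' claim as written is false, and the step asserting that each membership constraint is a half-line in $t$ (so that $I_l(a,z)$ is an interval with endpoints $t_l^\pm$) needs the Lipschitz bound on the normal field to guarantee $\partial_t d_X>0$ on the relevant window; this should be stated. (ii) The singular locus: you assert your computation ``produces the stated indicator term'', but with the conventions fixed in this paper ($\delta_l(n)=\mathds{1}_{\{h(B_l\oplus\check{W}_l,n)<0\}}$ as in Corollary \ref{2Dvol}, and $\II$ positive on convex boundaries with respect to the outer normal, as used in the proof of Theorem \ref{mainrreg}) your own expansion gives, on $\{h(B_l\oplus\check{W}_l,n)=0\}$, the fibre length $(t_l^+-t_l^-)^+=\tfrac{a^2}{2}\bigl(\II^-(W_l)-\II^+(B_l)\bigr)^++o(a^2)$, i.e.\ the nonnegative integrand $\tfrac12\bigl(\II^+(B_l)-\II^-(W_l)\bigr)^-$ per configuration (necessarily nonnegative, being a limit of $a^{-2}$ times a volume), whereas the quoted statement carries the nonpositive term $-\tfrac12\bigl(\II^+(B_l)-\II^-(W_l)\bigr)^+$; the two differ by $\tfrac12\vert\II^+(B_l)-\II^-(W_l)\vert$. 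You must therefore either reconcile the sign and indicator conventions with \cite{am2} (e.g.\ whether $\delta_l$ there means $\mathds{1}_{\{h\le 0\}}$, or $\II$ is taken with respect to the inner normal) and show your expansion reproduces that exact formula, or accept that the claimed match is unverified. The present paper never uses the singular term (its application arranges $\Ha^{d-1}(\{h=0\})=0$), so the discrepancy is invisible downstream, but it is a genuine gap in your derivation of the theorem as stated.
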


The proof of Theorem \ref{mainrreg} for $V_{d-2}$ follows from this:
\begin{proof}[Theorem \ref{mainrreg} for $V_{d-2}$]
We first introduce the sets that will serve as counter examples. For $0<r<R$ and $\theta \in (0,\pi)$, let 
\begin{equation*}
T(R,r) = B(r) \oplus B^{d-1}((R-r)\sin \theta)
\end{equation*}
where $B^{d-1}(s)$ is the ball of radius $s $ in $\lin(e_1,\dots ,e_{d-1})$. We then consider $r$-regular sets of the form
\begin{align*}
X{}&(R,r)\\
&=(B(R)\cap H^-_{R\cos \theta ,e_d }) \cup (T(R,r)  + (R-r)\cos \theta e_d), 
\end{align*}
see Figure \ref{fig2}.

\begin{figure}
\setlength{\unitlength}{1cm}
\begin{equation*}
\begin{picture}(6,4)
\qbezier(1,2)(1.1,0.1)(3,0)
\qbezier(3,0)(4.9,0.1)(5,2)
\qbezier(1,2)(1,2.9)(1.6,3.4)
\qbezier(5,2)(5,2.9)(4.4,3.4)
\put(1.9,3){\circle{1}}
\put(4.1,3){\circle{1}}
\put(1.9,3.5){\line(1,0){2.2}}
\put(1.6,3.4){\line(1,0){2.8}}
\put(3,2){\line(1,0){2}}
\put(3,2){\line(1,1){1.4}}
\put(3,2){\line(0,1){1.5}}
\put(4,1.6){$R$}
\put(3.1,2.5){$\theta$}
\qbezier(3,2.3)(3.1,2.3)(3.2,2.2)
\put(1.9,3){\line(1,0){0.5}}
\put(2,2.7){$r$}
\put(2.9,3.7){$S_3$}
\put(0.8,3.7){$S_2$}
\put(0.5,1.5){$S_1$}
\put(1.2,3.8){\vector(2,-1){0.5}}
\end{picture}
\end{equation*}
\caption{Sketch of the set $X(R,r)$.}
\label{fig2}
\end{figure}
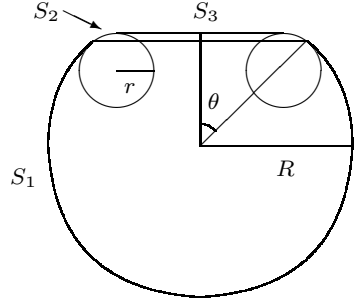

Choose a rotation $\rho \in SO(d)$ taking $e_d$ to a point in $S^{d-1}\backslash D$ where $D$ is as in the proof of Theorem \ref{main1'} and consider $\rho(X(R,r))$. Then 
\begin{equation*}
\hat{V}_{d-2}^{a \La_c}(\rho X(R,r) ) =\hat{V}_{d-2}^{a\rho^{-1}\La_c}(X(R,r)),
\end{equation*}
so by possibly changing the lattice, we may assume that $\rho=I$ and $e_d \in S^{d-1}\backslash D$.  Let $U \subseteq S^{d-1}\backslash D$ be the connected component containing $e_d$. This is open in $S^{d-1}$. 

This ensures that 
\begin{equation*}
\Ha^{d-1}(x\in \partial X\mid h(B_l\oplus\check{W}_l,n(x))=0)=0
\end{equation*}
 for all $l$ and we may thus ignore the last line of the formula in Theorem \ref{Q}.

If $n \notin D$, there exist unique vectors $b_l(n)\in B_l$ and $w_l(n)\in W_l$ such that $Q^+_x(B_l)=Q_x(b_l(n))$ and $Q^-_x(W_l)=Q_x(w_l(n))$ for all $x$ with $n(x)=n$.
This defines functions 
\begin{equation*}
\beta_l,\omega_l : S^{d-1}\backslash D \to C_{0,0}^n.
\end{equation*}
Note that these are locally constant and so is the indicator function $\delta_{l}$ on $S^{d-1}\backslash D$.

Let $\eps_1,\dots ,\eps_{d-1}\in T\partial X(R,r)$ denote the principal directions corresponding to the principal curvatures $k_1,\dots ,k_{d-1}$. Since $e_d\notin D$,  $n(x)\in  S^{d-1}\backslash D$ for almost all $x\in \partial X(R,r)$ and for such $x$
\begin{align*}
Q^+_x(B_l)-Q^-_x(W_l)={}& \sum_{j=1}^{d-1}k_j(-\langle \beta_l(n), \eps_j\rangle^2 + \langle \beta_l(n),n\rangle^2\\
& +\langle \omega_l(n), \eps_j \rangle^2 - \langle \omega_l(n),n\rangle^2).
\end{align*}

Observe that $\partial X(R,r)$ is the disjoint union of three sets $S_1$, $S_2$, and $S_3$ where
\begin{align*}
S_1 &= (\partial B(R))\cap H^-_{R\cos \theta ,e_d },
\\
S_2&= (\partial T(R,r)  + (R-r)\cos \theta e_d) \backslash  (H^-_{R\cos \theta ,e_d }\cup S_3),
\\
S_3&= B^{d-1}((R-r) \sin \theta) + ((R-r)\cos \theta+r)e_d.
\end{align*}
On $S_3$, $k_1=\dots =k_{d-1}=0$ and thus $Q$ vanishes on $S_3$.  

Parametrize $S_1$ by $g_1:S^{d-2}\times (\theta,\pi)\to S_1$. Identifying
 $S^{d-2}$ with the unit sphere in $\lin(e_1, \dots, e_{d-1})\subseteq \R^d$,
\begin{equation*}
g_1(u,\varphi)= R(\sin\varphi u  + \cos \varphi e_d).
\end{equation*}
Similarly, parametrize $S_2$ by $g_2:S^{d-2}\times (0,\theta) \to S_2$ where 
\begin{equation*}
g_2(u,\varphi)= (R-r)\sin\theta u  + r\cos \varphi e_d. 
\end{equation*}
Note that on both $S_1$ and $S_2$,
\begin{align*}
&n(u,\varphi)= \sin\varphi u + \cos \varphi e_d,\\
&\eps_{d-1}(u,\varphi) = - \cos\varphi u + \sin \varphi e_d,\\
&\eps_j(u,\varphi) = \eps_j'(u),
\end{align*}
for $j=1,\dots, d-2$, where $\eps_j'(u)$ are the principal directions on $S^{d-2}$.

On $S_1$,
\begin{align*}
&k_j=\frac{1}{R}\\
&\int_{S_1} f = \int_{\theta}^{\pi} \int_{S^{d-2}} f(u,\varphi) R^{d-1} \sin^{d-2} \varphi \Ha^{d-2}(du) d\varphi 
\end{align*}
for all $j=1,\dots, d-1$ and any integrable function $f$. On $S_2$,
\begin{align*}
&k_{d-1}=\frac{1}{r}\\
&k_j(\varphi)= \frac{\sin\varphi}{(R-r)\sin \theta +r\sin\varphi}\\
&\int_{S_2} f = \int^{\theta}_{0} \int_{S^{d-2}} f(u,\varphi)\\
&\quad \qquad \times r((R-r)\sin \theta +r\sin\varphi)^{d-2} \Ha^{d-2}(du) d\varphi 
\end{align*}
for $j=1,\dots, d-2$ and any integrable function $f$.

Define $F_1,F_2 : (0,\pi) \to \R$ by
\begin{align*}
F_1(\varphi) ={}&\det(\La)^{-1} \frac{1}{2}\sum_{l=1}^{2^{n^d}-2} w_l \int_{S^{d-2}}\\
&\Big(-\langle \beta_l, \eps_{d-1}(u,\varphi)\rangle^2 + \langle  \beta_l,n(u,\varphi) \rangle^2\\
& \, +\langle \omega_l, \eps_{d-1}(u,\varphi)\rangle^2 - \langle  \omega_l , n(u,\varphi) \rangle^2\Big)\\
&\times \delta_{l}(n(u,\varphi))\Ha^{d-2}(du),\\
F_2 (\varphi)={}& \det(\La)^{-1}\frac{1}{2} \sum_{l=1}^{2^{n^d}-2} w_l \sum_{j=1}^{d-2} \int_{S^{d-2}}  \\
&\Big(-\langle \beta_l, \eps_{j}(u,\varphi) \rangle^2 + \langle  \beta_l,n(u,\varphi) \rangle^2\\
& +\langle \omega_l, \eps_{j}(u,\varphi)\rangle^2 - \langle  \omega_l, n(u,\varphi) \rangle^2\Big)\\ &\times \delta_{l}(n(u,\varphi))\Ha^{d-2}(du).
\end{align*}

Theorem \ref{Q} yields 
\begin{align}
\lim_{a\to 0}E\hat{V}_{d-2}(X(R,r)) 
&=I_1+I_3,\label{estint}
\end{align}
where 
\begin{align*}
I_1 ={}&\int_{S_1}\det(\La)^{-1} \frac{1}{2}\sum_{l=1}^{2^{n^d}-2} w_l(Q^+_x(B_l)-Q^-_x(W_l))\\
&\times \delta_l(n)d\Ha^{d-1} \\
={}&\int_{\theta}^{\pi}\bigg(k_{d-1}F_1 + \sum_{j=1}^{d-2}k_jF_2\bigg) R^{d-1}\sin^{d-2}\varphi d\varphi \\
={}&R^{d-2}\int^{\pi}_{\theta} (F_1(\varphi )+(d-2)F_2(\varphi))\sin^{d-2}\varphi d\varphi,
\end{align*}
and
\begin{align*}
I_3={}&\int_{S_2}\det(\La)^{-1} \frac{1}{2}\sum_{l=1}^{2^{n^d}-2} w_l(Q^+_x(B_l)-Q^-_x(W_l))\\
&\times \delta_l(n)d\Ha^{d-1} \\
={}&\int_{0}^{\theta}\bigg(k_{d-1}F_1 + \sum_{j=1}^{d-2}k_jF_2\bigg)\\
&\times r({(R-r)\sin\theta+r\sin\varphi})^{d-2}d\varphi \\
={}& \int_{0}^{\theta} \left(\frac{1}{r}F_1(\varphi) + \left( \frac{(d-2)\sin \varphi}{(R-r)\sin\theta+r\sin\varphi}\right)F_2(\varphi)\right)\\
&\times r({(R-r)\sin\theta+r\sin\varphi})^{d-2}d\varphi\\ \nonumber
={}&\int_{0}^{\theta} (({(R-r)\sin\theta+r\sin\varphi})^{d-2}F_1(\varphi) + (d-2)\\
&\times r{\sin \varphi}({(R-r)\sin\theta +r\sin\varphi})^{d-3}F_2(\varphi))d\varphi\\ \nonumber
={}&R^{d-2}\sin^{d-2}\theta \int_{0}^{\theta}  F_1(\varphi) d\varphi + r\tilde{p}(r,\theta).
\end{align*}
Here $\tilde{p}$ is a polynomial in $r$ with coefficients depending only on $\theta$ and $R$.

On the other hand,
\begin{align}
V_{d-2}(X(R,r))
&= \frac{1}{2\pi}(I_2+I_4)\label{trueint}
\end{align}
where
\begin{align*}
I_2&=\int_{S_1}\bigg(k_{d-1} + \sum_{j=1}^{d-2}k_j \bigg) d\Ha^{d-1} \\
&=\Ha^{d-2}(S^{d-2})(d-1)R^{d-2}\int^{\pi}_{\theta} \sin^{d-2}\varphi d\varphi
\end{align*}
and
\begin{align*}
I_4={}&\int_{ S_2} \bigg(k_{d-1} + \sum_{j=1}^{d-2}k_j\bigg) d\Ha^{d-1}\\ \nonumber
={}& \Ha^{d-2}(S^{d-2}) \int_{0}^{\theta} r({(R-r)\sin\theta +r\sin\varphi})^{d-2}  \\
& \times  \left(\frac{1}{r} + (d-2)\left( \frac{\sin \varphi}{(R-r)\sin\theta+r\sin\varphi}\right)\right)d\varphi\\ \nonumber
={}&\Ha^{d-2}(S^{d-2})\int_{0}^{\theta} (({(R-r)\sin\theta+r\sin\varphi})^{d-2} \\
&+ (d-2)r{\sin \varphi}({(R-r)\sin\theta+r\sin\varphi})^{d-3})d\varphi\\ \nonumber
={}&\Ha^{d-2}(S^{d-2})R^{d-2} \sin^{d-2}\theta \int_{0}^{\theta}1d\varphi + r{p}(r,\theta).
\end{align*}
Again ${p}$ is a polynomial in $r$ with coefficients depending only on $\theta$ and $R$.

Since $\hat{V}_{d-2}$ is asymptotically unbiased, \eqref{estint} must equal \eqref{trueint}, i.e.
\begin{equation*}
I_1+I_3 = \tfrac{1}{2\pi}(I_2+I_4).
\end{equation*}
This must hold for all $0<r<R$, so letting $r\to0 $ shows that
\begin{align}\nonumber
\int^{\pi}_{\theta} {}&(F_1(\varphi)+(d-2)F_2(\varphi))\sin^{d-2}\varphi d\varphi \\\label{musthold}
&+ \sin^{d-2} \theta\int_{0}^{\theta}  F_1(\varphi) d\varphi\\ \nonumber
={}& (d-1)\frac{1}{2\pi}\int^{\pi}_{\theta}   \sin^{d-2}\varphi d\varphi  +\sin^{d-2}\theta \frac{1}{2\pi} \int_{0}^{\theta} 1 d\varphi 
\end{align}
holds for all $\theta\in (0, \pi)$.

The assumption $e_d\in U$ ensures that for small values of $\varphi$, $n(u,\varphi)\in U$ for all $u\in S^{d-2}$, and hence all $b_l$, $w_l$, and $\delta_{l}$ are constants. This shows that $F_1$ and $F_2$ are continuous for small $\varphi$. In fact, a direct computation shows that for such small $\varphi$,
\begin{align*}
F_1(\varphi)&=K_1(\sin^2\varphi- \cos^2 \varphi)+K_2\sin\varphi \cos\varphi,\\
F_2(\varphi)&= K_3 + K_4\sin^2\varphi +K_5\cos^2\varphi+ K_6\sin\varphi \cos\varphi, 
\end{align*}
where $K_1,\dots , K_6\in \R$ are certain constants. In par\-ti\-cu\-lar, \eqref{musthold} may be differentiated with respect to $\theta$ for small values of $\theta$. This yields
\begin{align}\nonumber
({}&d-2)\bigg( -F_2(\theta)\sin^{d-2}\theta  +\cos\theta \sin^{d-3}\theta\int_0^{\theta}  F_1(\varphi)d\varphi \bigg) \\ 
&= \frac{(d-2)\Ha^{d-2}(S^{d-2})}{2\pi}(-\sin^{d-2}\theta + \theta \cos \theta \sin^{d-3}\theta)\label{must}
\end{align}
for $\theta$ small.

Since $d-2\neq 0$, \eqref{must} shows that $\theta \cos \theta \sin^{d-3}\theta$ must be a polynomial in $\cos \theta$ and $\sin \theta$, which is a contradiction. \qed
\end{proof}

\begin{acknowledgements}
The author is supported by Centre for Stochastic Geometry and Advanced Bioimaging, funded by the Villum Foundation.
The author is most thankful to Markus Kiderlen for suggesting this problem in the first place and for useful input along the way. 
\end{acknowledgements}

\end{document}